\theoremstyle{plain}
\newtheorem{theorem}{Theorem}[section]
\newtheorem{lemma}[theorem]{Lemma}
\newtheorem{cor}[theorem]{Corollary}
\newtheorem{prop}[theorem]{Proposition}
\newtheorem{Conj}[theorem]{Conjecture}
\newtheorem{thmx}{Theorem}
\theoremstyle{remark}
\theoremstyle{definition}
\newtheorem{defn}[theorem]{Definition}
\newtheorem{Remark}[theorem]{Remark}
\newtheorem{??}[theorem]{Question}
\newtheorem{???}[theorem]{Questions}
\numberwithin{equation}{section}
\numberwithin{theorem}{section}
\newcommand\hfld[2]{\smash{\mathop{\hbox to 10mm{\rightarrowfill}}
     \limits^{\scriptstyle#1}_{\scriptstyle#2}}}
\newcommand\hflg[2]{\smash{\mathop{\hbox to 10mm{\leftarrowfill}}
     \limits^{\scriptstyle#1}_{\scriptstyle#2}}}
\title{Dualities for root systems with automorphisms \\ and applications to non-split groups}
\author{Thomas J. Haines}
\begin{document}

\thanks{Research of T.H.~partially supported by NSF DMS-1406787} %and %by the MSRI Fall 2014 program on {\em Geometric Representation Theory}.}

\date{}

\maketitle

\begin{abstract} 
This article establishes some elementary dualities for root systems with automorphisms. We give several applications to reductive groups over nonarchimedean local fields: (1) the proof of a conjecture of Pappas-Rapoport-Smithling \iffalse\cite[Conj.\,4.22]{PRS} \fi characterizing the extremal elements of the $\{ \mu \}$-admissible sets attached to general non-split groups; (2) for quasi-split groups, a simple uniform description of the Bruhat-Tits \'{e}chelonnage root system $\Sigma_0$, \iffalse \cite[$\S1.4$]{BT1} \fi the Knop root system $\widetilde{\Sigma}_0$, \iffalse \cite[$\S4$]{Kn} \fi and the Macdonald root system $\Sigma_1$, \iffalse \cite[$\S3$]{Mac}\fi in terms of Galois actions on the absolute roots $\Phi$; and (3) for quasi-split groups, the construction of the {\em geometric basis} of the center of a parahoric Hecke algebra, and the expression of certain important elements of the stable Bernstein center in terms of this basis. The latter gives an explicit form of the test function conjecture for general Shimura varieties with parahoric level structure.
\end{abstract}

\tableofcontents

%%% Section 1
\markboth{T. Haines}{Dualities for root systems with automorphisms and applications}

\section{Main results} \label{stmt_sec}

Suppose $\Phi \supset \Delta$ is a based root system in a real vector space $V$ which is stabilized by a finite group $I$ acting linearly on $V$. We assume $\Phi$ is reduced. There are four natural ways to construct ``$I$-fixed'' reduced based root systems associated to the $I$-action on $(V, \Phi, \Delta)$.  Namely, the operations of {\em norm} $N_I$ and {\em modified norm} $N'_I$ give rise to based reduced root systems $N_I(\Phi) \supset N_I(\Delta)$ (resp.\,$N'_I(\Phi) \supset N'_I(\Delta)$) in $V^I$. Also, the operations of {\em restriction} ${\rm res}_I$ and {\em modified restriction} ${\rm res}'_I$ give rise to based reduced root systems in $V_I$. (See Definitions \ref{norm+res_defn}, \ref{4_defn}.) These four operations yield two pairs of dual root systems (see Proposition \ref{dual_prop}). Suppose $\Phi^\vee \supset \Delta^\vee$ is the dual based root system to $\Phi \supset \Delta$. Then:

\begin{thmx} \label{thm_A} The based root system $(V_I, {\rm res}_I(\Phi^\vee), {\rm res}_I(\Delta^\vee))$ is dual to $(V^I, N'_I(\Phi), N'_I(\Delta))$, and $(V_I, {\rm res}'_I(\Phi^\vee), {\rm res}'_I(\Delta^\vee))$ is dual to $(V^I, N_I(\Phi), N_I(\Delta))$.
\end{thmx}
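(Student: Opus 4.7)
The plan is to reduce Theorem~\ref{thm_A} to an orbit-by-orbit calculation after fixing an $I$-invariant positive definite inner product on $V$. Such an inner product yields $I$-equivariant identifications $V\cong V^*$ and $V^I\cong V_I$, under which the duality pairing $V^I\times V_I\to\mathbb R$ becomes the restriction of the form, the restriction map $V^*\twoheadrightarrow(V^I)^*$ corresponds to the orthogonal projection $V\to V^I$, and the norm operations become orbit sums inside $V^I$. Checking that two based reduced root systems in dual spaces are dual to each other amounts to verifying three things: that the canonical pairing takes the value $2$ on proposed root/coroot pairs, that the proposed simple roots correspond, and that the reflections on each side match up under the pairing.

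The next step is to classify the $I$-orbits of roots. Since $I$ preserves $\Delta$, it preserves heights, so $\alpha$ and $\sigma\alpha$ always have the same height and $\alpha-\sigma\alpha$ is never a root; hence for every $\sigma\in I$ with $\sigma\alpha\ne\alpha$ one has $\langle\alpha^\vee,\sigma\alpha\rangle\in\{0,-1\}$. I call the orbit $\mathcal O=I\cdot\alpha$ of \emph{orthogonal type} if only the value $0$ occurs, and of \emph{linked type} if $\alpha+\sigma\alpha\in\Phi$ for some $\sigma$; in the latter case $\mathcal O$ decomposes into pairwise-orthogonal $A_2$-pairs $\{\beta,\sigma\beta\}$ transitively permuted by $I$. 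It is precisely on linked orbits that the primed operations $N'_I$ and $\mathrm{res}'_I$ differ from $N_I$ and $\mathrm{res}_I$, by a factor of $2$.

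With this classification in hand, the crux is the direct pairing $\langle\alpha^\vee,N_I(\alpha)\rangle=\sum_{\bar\sigma\in I/I_\alpha}\langle\alpha^\vee,\sigma\alpha\rangle$, which equals $2$ in the orthogonal case (only $\sigma\in I_\alpha$ contributes) and $1$ in the linked case ($2$ from $\sigma\in I_\alpha$, corrected by the single $-1$ contribution of the $A_2$-partner of $\alpha$). From this one reads off $\langle\mathrm{res}_I(\alpha^\vee),N'_I(\alpha)\rangle=2$ uniformly, and symmetrically $\langle\mathrm{res}'_I(\alpha^\vee),N_I(\alpha)\rangle=2$, which are precisely the coroot identities needed for the two claimed dualities. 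The reflection compatibility is checked the same way on $V^I$: in the orthogonal case $\prod_{\beta\in\mathcal O}s_\beta$ restricts to the reflection in $N_I(\alpha)$ with coroot $\mathrm{res}_I(\alpha^\vee)$, while in the linked case the commuting reflections attached to each $A_2$-pair $\{\beta,\sigma\beta\}$ restrict to the reflection in $\beta+\sigma\beta=\tfrac12 N'_I(\alpha)$, exactly matching the proposed coroot. Compatibility of bases is automatic, because every orbit sum of positive roots is positive in $V^I$ and every restriction of a positive coroot is positive in $V_I$, so $\Delta$ (resp.\,$\Delta^\vee$) is carried to the claimed simple roots (resp.\,coroots).

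The main obstacle is the bookkeeping on linked orbits. One must confirm that inserting the factor of $2$ on exactly one side neutralises the non-reducedness of $BC$-type that the naive orbit sums would otherwise produce --- that is, that $N'_I(\Phi)$ and $\mathrm{res}'_I(\Phi^\vee)$ are genuine reduced root systems in the sense needed for duality. Granted this (which is essentially the content of Definitions~\ref{norm+res_defn} and \ref{4_defn}), the two statements of Theorem~\ref{thm_A} become the two symmetric ways of placing the necessary factor of~$2$, and both follow in parallel from the same orbit-wise computation, with Proposition~\ref{dual_prop} ensuring that the proposed coroot structure on the invariant side genuinely lifts to a root system dual on the coinvariant side.
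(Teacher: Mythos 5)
Your proposal is correct and follows essentially the same route as the paper: fix an $I$-invariant inner product, identify $V_I\cong V^I$ by orbit-averaging, and reduce everything to an orbit-wise computation on simple roots governed by the dichotomy between pairwise-orthogonal orbits and the linked ($A_{2n}$-type) orbits, which is exactly the content of Lemma \ref{average_lemma} feeding into Proposition \ref{dual_prop}. The only cosmetic difference is that you compute the Cartan pairing $\langle\alpha^\vee,N_I(\alpha)\rangle$ directly via root strings, where the paper computes the equivalent length ratio $(\alpha^\diamond\,|\,\alpha^\diamond)/(\alpha\,|\,\alpha)$ after invoking the classification of the $A_{2n}$ case.
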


A basic example of the above situation comes from non-split groups over certain local fields. Let $F$ be a nonarchimedean local field and let $\breve{F}$ be the completion of its maximal unramified extension in some separable closure $\bar{F}$ of $F$. Let $I$ denote the absolute Galois group of $\breve{F}$ and let $\mathcal W_F = I \rtimes \langle \tau \rangle$ be the Weil group of $F$, where $\tau$ is a geometric Frobenius element; these groups will act through finite quotients on the root systems which follow and we use the same letters to denote these quotients. Let $G$ be a connected reductive group over $F$. We assume $G$ is quasi-split over $F$. Let $A$ be a maximal $F$-split torus in $G$, with centralizer the maximal $F$-torus $T = {\rm Cent}_G(A)$; let $B = TU$ be an $F$-rational Borel subgroup with unipotent radical $U$. Let $\Phi = \Phi(G,T)$ be the set of absolute roots of $G/\bar{F}$ with simple $B$-positive roots $\Delta \subset \Phi$. Let $\breve{\Phi}$, $\Phi_0$ be the sets of relative roots for $G/\breve{F}$ and $G/F$. Recall that $\breve{\Phi}$ and $\Phi_0$ need not be reduced.

The four reduced root systems attached to $\Phi \supset \Delta$ turn out to be related to Bruhat-Tits theory. In \cite[$\S1.4$]{BT1}, Bruhat-Tits defined the \'{e}chelonnage reduced based root systems $\breve{\Sigma}$ and $\Sigma_0$ associated to $G/\breve{F}$ and $G/F$, which play a key role in the structure theory of those groups and their affine Weyl groups (cf.\,\cite{HR08}). One can read off $\breve{\Sigma}$ and $\Sigma_0$ from the tables of \cite[$\S1.4$]{BT1} and \cite[$\S4$]{Tits}. There are also the root system $\tilde{\Sigma}_0$ of Knop \cite{Kn} and the root system $\Sigma_1$ of Macdonald \cite[$\S3$]{Mac} used in the study of the associated affine Hecke algebras with possibly unequal parameters. It does not seem to have been noticed before that, when $G/F$ is quasi-split, $\breve{\Sigma}$, $\Sigma_0$, $\widetilde{\Sigma}_0$, and $\Sigma_1$ can each be described in a simple uniform way using only the Galois actions on the absolute roots $\Phi$ (see Theorems \ref{Sigma_thm}, \ref{Sigma_0_thm}, \ref{Knop_comp_thm}, and Corollary \ref{Mac_cor}).

\begin{thmx} \label{thm_B} Suppose $G/F$ is quasi-split. Then $\breve{\Sigma}$, $\Sigma_0$, $\widetilde{\Sigma}_0$, and $\Sigma_1$ can be characterized in terms of the $\mathcal W_F$-action on the absolute roots $\Phi$, as follows:
\begin{align*}
\breve{\Sigma} &\cong N'_I(\Phi)  \\ 
\Sigma_0 &\cong {\rm res}'_\tau(\breve{\Sigma}) \\
\widetilde{\Sigma}_0 &\cong {\rm res}_\tau(\breve{\Sigma}) \\
\Sigma_1 &= {\rm res}'_\tau(\breve{\Sigma}) \cup {\rm res}_\tau(\breve{\Sigma}).
\end{align*}
\end{thmx}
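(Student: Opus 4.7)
The overall plan is to verify each of the four identifications by comparing the abstract operations $N'_I, {\rm res}'_\tau, {\rm res}_\tau$ introduced in Section 2 with the explicit constructions of $\breve{\Sigma}, \Sigma_0, \widetilde{\Sigma}_0, \Sigma_1$ given in \cite{BT1}, \cite{Kn}, \cite{Mac}. The quasi-split hypothesis on $G/F$ is crucial: it implies that $I$ and $\tau$ each act by permutations of the simple roots preserving a Borel, so each orbit lies in one of only two types, and it also gives clean identifications between the $\mathbb R$-spans of characters of the maximal split tori $A \subseteq S \subseteq T$ and the vector spaces $V^I, V_I, (V^I)^\tau, (V^I)_\tau$.

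For the first identification $\breve{\Sigma} \cong N'_I(\Phi)$, I would proceed as follows. Bruhat-Tits's recipe for $\breve{\Sigma}$ is to restrict $\Phi$ to $S$, obtaining the (possibly non-reduced) relative root system $\breve{\Phi}$, and then to replace each multipliable relative root $\alpha$ (one with $2\alpha \in \breve{\Phi}$) by $2\alpha$. Under the quasi-split hypothesis, each $I$-orbit on $\Phi$ is either of type $A_1^n$ (pairwise orthogonal roots) or of type $A_2$ (two orbit elements summing to a third root, necessarily $I$-fixed); the latter arises exactly in type-$A_{2n}$ components under the involutive action. On type-$A_1^n$ orbits, the modified norm $N'_I$ produces the simple restriction to $V^I$, matching the corresponding root of $\breve{\Sigma}$. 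On type-$A_2$ orbits, $N'_I$ doubles the restriction: this is precisely the Bruhat-Tits replacement of the multipliable root by its double. Matching these two cases gives the required isomorphism.

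For $\Sigma_0 \cong {\rm res}'_\tau(\breve{\Sigma})$, I would run the analogous argument with $\tau$ in place of $I$, now viewing $\breve{\Sigma}$ as a based root system in $V^I$ equipped with the $\tau$-action inherited from $\mathcal W_F = I \rtimes \langle \tau \rangle$. The $\tau$-orbits on $\breve{\Sigma}$ are again of type $A_1^n$ or $A_2$, and parallel case analysis identifies the output with the echelonnage root system for $G/F$. The appearance of ${\rm res}'_\tau$ rather than $N'_\tau$ at this stage is a consequence of Theorem \ref{thm_A} together with the convention in \cite{BT1} that places $\Sigma_0$ naturally in the coroot-side apartment: the modified norm on the root side of $\breve{\Sigma}$ corresponds to the modified restriction on the coroot side via the duality. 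For $\widetilde{\Sigma}_0 \cong {\rm res}_\tau(\breve{\Sigma})$, I would use that Knop's root system agrees with $\Sigma_0$ outside of type-$A_2$ $\tau$-orbits and, inside such orbits, uses the undoubled restriction, which corresponds to replacing ${\rm res}'_\tau$ by ${\rm res}_\tau$. For $\Sigma_1$, Macdonald's construction includes both the undoubled and the doubled restriction in the type-$A_2$ case, producing the non-reduced union ${\rm res}'_\tau(\breve{\Sigma}) \cup {\rm res}_\tau(\breve{\Sigma})$.

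The substantive content lies entirely in the type-$A_2$ orbits, which arise only from odd-rank unitary-like factors of $G$; on all other orbits, the four operations coincide and the identifications are automatic. The main obstacle will be the careful bookkeeping of conventions across three sources: Bruhat-Tits \cite{BT1} defines $\breve{\Sigma}$ and $\Sigma_0$ via local Dynkin diagrams and affine Weyl group actions; Knop and Macdonald use conventions designed for the affine Hecke algebras with unequal parameters; and the operations $N'_I, {\rm res}'_\tau, {\rm res}_\tau$ are defined purely abstractly. Keeping straight the averaging identification $V^I \cong V_I$ and the duality between roots and coroots, as codified in Theorem \ref{thm_A}, is where most of the work lies; indeed, Theorem \ref{thm_A} supplies the conceptual explanation for why both norm and restriction operations naturally appear in the statement.
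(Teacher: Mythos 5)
There is a genuine gap, and it occurs already at the first identification. You describe the Bruhat--Tits recipe for $\breve{\Sigma}$ as ``restrict $\Phi$ to $S$ and replace each multipliable relative root by its double,'' and you claim that on orthogonal orbits the modified norm $N'_I$ ``produces the simple restriction.'' Both claims are off. The norm $N_I(\alpha) = |I\alpha|\,\alpha^\diamond$ is the orbit-size multiple of the restriction, not the restriction itself; since orbit sizes vary over $\Delta$, this changes relative lengths and hence the type of the resulting root system --- indeed this is exactly why norm and restriction produce \emph{dual}, not equal, systems (Theorem \ref{thm_A}). Your recipe therefore computes ${\rm res}'_I(\Phi) \cong \breve{\Phi}^{\rm red}$, whereas the theorem asserts $\breve{\Sigma} \cong N'_I(\Phi) \cong \big({\rm res}_I(\Phi^\vee)\big)^\vee$, and these genuinely differ: for $G/\breve{F}$ non-split with $\breve{\Phi}$ reduced (e.g.\ $\Phi$ of type $A_{2n-1}$ with the involution), ${\rm res}'_I(\Phi) = \breve{\Phi}$ is of type $C_n$ while $\breve{\Sigma} \cong \breve{\Phi}^\vee$ is of type $B_n$, per the Prasad--Raghunathan description recalled after Theorem \ref{Sigma_thm}. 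The \'echelonnage system is not determined from $\breve{\Phi}$ by a doubling rule; the paper's proof instead rests on the lattice identity $Q^\vee(\Phi)_I = X_*(T_{\rm sc})_I = Q^\vee(\breve{\Sigma})$ from \cite{HR08}, a comparison of positive cones, and then duality via Proposition \ref{dual_prop}. Some such input from Bruhat--Tits theory is unavoidable. Likewise, for $\Sigma_0$ the appearance of ${\rm res}'_\tau$ rather than $N'_\tau$ is not a matter of ``convention'': it comes from Richarz's theorem $\widetilde{W}(G_{\rm sc}/\breve{F})^\tau = \widetilde{W}(G_{\rm sc}/F)$, which gives $\mathbb Z\Sigma_0^\vee = (\mathbb Z\breve{\Sigma}^\vee)^\tau$, i.e.\ $\Sigma_0^\vee = N_\tau(\breve{\Sigma}^\vee)$, and then dualizing. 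The two descents use different operations precisely because one lattice statement is about coinvariants and the other about invariants.

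For $\widetilde{\Sigma}_0$ you assert that Knop's system ``uses the undoubled restriction inside type-$A_2$ orbits,'' but Knop's definition of a special root is the inequality of Hecke-algebra parameters $L(s_a) \neq L(s_0)$, a condition on the group over the residue field, not a priori a condition on the Galois action on $\Phi$. The entire content of Theorem \ref{Knop_comp_thm} is that, for quasi-split $G/F$, the special simple roots of $\Sigma_0$ are exactly those coming from non-orthogonal $\tau$-orbits on $\breve{\Pi}$; the paper proves this by classifying via Tits's tables which quasi-split groups have $\Sigma_0$ of type $C_n$ and checking the parameter values case by case (only $^2A'_{2n}$ survives). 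Your proposal assumes this equivalence rather than proving it. Granting it, the deduction of the $\Sigma_1$ statement is fine, matching Corollary \ref{Mac_cor}.
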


Another application of the ideas behind Theorem \ref{thm_A} relates to Rapoport-Zink local models of Shimura varieties. Let $\{ \mu \}$ denote a geometric conjugacy class in the cocharacter group $X_*(G)$. In \cite{PRS}, Pappas, Rapoport, and Smithling announce two conjectures on the $\{\mu\}$-admissible subset ${\rm Adm}(\{\mu\})$ of the extended affine Weyl group $\widetilde{W} = X_*(T)_I \rtimes \breve{W}$. Their Conjecture 4.39 on vertexwise admissibility was proved in \cite{HH}. In this article we prove the other conjecture, \cite[Conj.\,4.22]{PRS},  on the extremal elements in ${\rm Adm}(\{ \mu \})$. Here we do not need to assume $G$ is quasi-split over $F$ (in any case it will automatically be quasi-split over $\breve{F}$ by Steinberg's theorem).

\begin{thmx} \label{thm_C} The extremal elements of ${\rm Adm}(\{ \mu\})$ are the images in $X_*(T)_I$ of the $\breve{F}$-rational elements $\widetilde{\Lambda}_{\{\mu \}}$ in the geometric conjugacy class $\{\mu \} \subset X_*(G)$.
\end{thmx}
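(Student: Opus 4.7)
The plan is to reduce the statement to a comparison of lengths and dominance orders on translations in the Iwahori-Weyl group $\widetilde{W} = X_*(T)_I \rtimes \breve{W}$. Since ${\rm Adm}(\{\mu\})$ is the Bruhat-lower closure of $\{t_\lambda : \lambda \in \Lambda_{\{\mu\}}\}$, its extremal elements are necessarily translations, and it suffices to determine which $\lambda \in \Lambda_{\{\mu\}}$ yield Bruhat-maximal $t_\lambda$.

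First I would work over $\breve{F}$, where $G$ is automatically quasi-split by Steinberg, so that Theorems A and B apply. Fix an $\breve{F}$-rational (hence $I$-stable) Borel $B \supset T$, and let $\mu_{\rm dom}$ denote the $B$-dominant representative of $\{\mu\}$; it is $I$-fixed. Since $W(G,T)^I = \breve{W}$ in the quasi-split setting, one has $\{\mu\}^I = \breve{W}\mu_{\rm dom}$ and so $\widetilde{\Lambda}_{\{\mu\}} = \breve{W}\bar{\mu}_{\rm dom}$ in $X_*(T)_I$. The key input is the following dominance inequality: for every $\nu \in \{\mu\}$, one has $\bar\nu \preceq \bar{\mu}_{\rm dom}$ in the partial order on $X_*(T)_I \otimes \mathbb{R}$ generated by the positive coroots of $\breve{\Sigma}$. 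This follows by writing $\mu_{\rm dom} - \nu = \sum_\alpha c_\alpha \alpha^\vee$ with $c_\alpha \geq 0$ and $\alpha \in \Phi^+$ (a classical fact about dominant Weyl orbits), and then applying the projection ${\rm res}_I \colon X_*(T) \to X_*(T)_I$: by Theorem A (identifying $\breve{\Sigma}^\vee = {\rm res}_I(\Phi^\vee)$ with positive coroots coming from $\Phi^+$), one has $\bar{\mu}_{\rm dom} - \bar\nu = \sum_\alpha c_\alpha\,{\rm res}_I(\alpha^\vee)$ in the positive $\breve{\Sigma}^\vee$-cone.

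Using the standard length formula $\ell(t_\lambda) = \langle 2\rho_{\breve{\Sigma}}, \lambda^+\rangle$ (where $\lambda^+$ is the $\breve{W}$-dominant representative of $\lambda$), which is $\breve{W}$-invariant as a function of $\lambda$, the dominance inequality yields $\ell(t_{\bar\nu}) \leq \ell(t_{\bar{\mu}_{\rm dom}})$ with equality if and only if $\bar\nu^+ = \bar{\mu}_{\rm dom}$, i.e., $\bar\nu \in \widetilde{\Lambda}_{\{\mu\}}$. Thus all elements of $\widetilde{\Lambda}_{\{\mu\}}$ give translations of maximal length; any Bruhat comparison between two such must be an equality, so they are pairwise incomparable and extremal in ${\rm Adm}(\{\mu\})$.

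The remaining step---and the main technical obstacle---is to show that each $\lambda \in \Lambda_{\{\mu\}} \setminus \widetilde{\Lambda}_{\{\mu\}}$ yields a non-extremal $t_\lambda$. For this I would invoke a Bruhat comparison lemma for translations: whenever $\bar\nu \preceq \bar{\mu}_{\rm dom}$ in $X_*(T)_I$ and $\bar\nu - \bar{\mu}_{\rm dom}$ lies in the appropriate coroot sublattice, one has $t_{\bar\nu} \leq t_{\bar{\mu}_{\rm dom}}$ in $\widetilde{W}$. In the split case this is classical (Kottwitz-Rapoport); the non-split version requires care to handle the possible non-reducedness of $\breve{\Phi}$ versus the reduced $\breve{\Sigma}$ and the torsion in $X_*(T)_I$. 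The uniform description of $\breve{\Sigma}$ in Theorems A--B should allow a transcription of the classical alcove-walk/reflection-sequence arguments to this setting. Once this lemma is in hand, combining it with the strict length inequality for $\bar\nu \notin \widetilde{\Lambda}_{\{\mu\}}$ gives $t_{\bar\nu} < t_{\bar{\mu}_{\rm dom}}$, so $t_{\bar\nu}$ is not extremal, completing the identification of the extremal set with $\widetilde{\Lambda}_{\{\mu\}}$.
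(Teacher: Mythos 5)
Your first two steps track the paper's argument closely: the dominance inequality $\bar{\nu}^+ \preceq \bar{\mu}_{\rm dom}$ with respect to $\breve{\Sigma}^{\vee +}$ is exactly what the paper establishes (via the equality $Q^\vee(\Phi)_I = Q^\vee(\breve{\Sigma})$ and the identification of positive cones, which is the content of your appeal to Theorem \ref{thm_A}), and the length computation $\ell(t_{\bar{\nu}}) = \langle 2\rho_{\breve{\Sigma}}, \bar{\nu}^+\rangle$ correctly shows that the elements of $\Lambda_{\{\mu\}}$ are of maximal length and hence pairwise incomparable and maximal among translations that dominate them.

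The final step, however, rests on a lemma that is false as stated. You claim that $\bar{\nu} \preceq \bar{\mu}_{\rm dom}$ (plus a lattice condition) implies $t_{\bar{\nu}} \leq t_{\bar{\mu}_{\rm dom}}$, and you attribute the split case to Kottwitz--Rapoport. The classical result requires \emph{both} cocharacters to be dominant. For non-dominant $\bar{\nu}$ it fails already for split ${\rm SL}_2$: with $\mu = \alpha^\vee$ dominant and $\nu = -\alpha^\vee$ one has $\nu \preceq \mu$ and $\mu - \nu = 2\alpha^\vee \in Q^\vee$, yet $t_\nu \not\leq t_\mu$ since both have length $2$ and are distinct. (Indeed, if your lemma were true it would force $t_{\bar{\nu}} = t_{\bar{\mu}_{\rm dom}}$ for every $\bar{\nu} \in \Lambda_{\{\mu\}}$ by your own length argument, which is absurd.) The correct conclusion for non-dominant $\bar{\nu}$ is only that $t_{\bar{\nu}} \leq t_{\bar{\mu}'}$ for \emph{some} $\bar{\mu}' \in \breve{W}\bar{\mu}$, and this is precisely what the missing ingredient must deliver. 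The paper's mechanism is two-step: first the dominant-to-dominant comparison $t_{\bar{\nu}_{\rm dom}} \leq t_{\bar{\mu}_{\rm dom}}$ (quoted from Richarz resp.\ Rapoport), and then Lemma \ref{lemma2} (from \cite[Lem.\,4.5]{H01}): if $\ell(x) = \ell(sxs)$ for $s \in S_{\rm aff}$ and $x \leq t_{\bar{\nu}}$, then $sxs \leq t_{\bar{\nu}'}$ for some $\bar{\nu}' \in \breve{W}\bar{\nu}$. Since $t_{\bar{\nu}}$ is obtained from $t_{\bar{\nu}_{\rm dom}}$ by a chain of length-preserving conjugations by simple affine reflections, this transports the bound at the cost of replacing $\bar{\mu}_{\rm dom}$ by a $\breve{W}$-conjugate. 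Your proposal contains no substitute for this conjugation step, and this is not a technicality to be ``transcribed'' from the split case --- it is the actual content of the non-dominant part of the theorem. (Minor additional points: the set you call $\Lambda_{\{\mu\}}\setminus\widetilde{\Lambda}_{\{\mu\}}$ should be $\{\bar{\mu}\}\setminus\Lambda_{\{\mu\}}$, and $\widetilde{\Lambda}_{\{\mu\}}$ is not the set of $I$-fixed points of $\{\mu\}$; neither affects the main line.)
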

See Conjecture \ref{main_conj} and Theorem \ref{PRS_thm}. This is of fundamental importance for understanding the geometry of special fibers of Rapoport-Zink local models. If $(G_{\mathbb Q_p}, \{ \mu \})$ is the $p$-adic data associated to PEL Shimura data $({\bf G}/\mathbb Q, \{ h \}, K^pK_p)$ where $K_p \subset G(\mathbb Q_p)$ is an Iwahori subgroup, and if $M_\mu$ is the associated Rapoport-Zink local model \cite{RZ}, then Theorem \ref{thm_C} ensures that we have a good understanding of the set of irreducible components in the special fiber of $M_\mu$. See $\S\ref{extremal_sec}$ for more discussion.

We now assume again that $G/F$ is quasi-split. With Theorems \ref{thm_B} and \ref{thm_C} in hand, we can construct and study a {\em geometric basis} for the center $\mathcal Z(G(F), J)$ of the parahoric Hecke algebra associated to $G(F)$ and an parahoric subgroup $J \subset G(F)$. Recall that $\mathcal W_F$ acts on the dual group $\widehat{G}$, preserving a splitting $(\widehat{B},\widehat{T}, \widehat{X})$. A dual-group consequence of Theorem \ref{thm_C} is that there is an isomorphism of $\widehat{G}^I$-modules 
$$
V_\mu^I =  \bigoplus_{\bar{\lambda} \in {\mathcal Wt}(\bar{\mu})^+}  a_{\bar{\lambda}, \mu} V_{\bar{\lambda}}
$$
for $a_{\bar{\lambda}, \mu} \in \mathbb Z_{\geq 0}$. Here $\bar{\lambda} \in X^*(\widehat{T}^I)$ is the image of $\lambda \in X^*(\widehat{T})$, $V_\mu$ and $V_{\bar{\lambda}}$ are irreducible highest weight representations of the reductive groups $\widehat{G}$ and $\widehat{G}^I$, and ${\mathcal Wt}(\bar{\mu})$ is the set of $\widehat{T}^I$-weights in $V_{\bar{\mu}}$. These extend to representations $V^I_\mu$ and $V_{\bar{\lambda},1}$ of $\widehat{G}^I \rtimes \langle \tau \rangle$ (see $\S\ref{highest_wt_sec}$ and $\S\ref{geom_basis_sec}$). The following theorem summarizes Lemma \ref{geom_basis_lem} and Theorem \ref{coeff_KL_poly}. 

\begin{thmx} \label{thm_D}
There is a basis $\{ C_{\bar{\lambda}, J}\}_{\bar{\lambda}}$ for $\mathcal Z(G(F), J)$ indexed by $\bar{\lambda} \in X^*(\widehat{T}^I)^{+, \tau}$, characterized by: 
\begin{equation*}
C_{\bar{\lambda}, J} \,\,\,\, \mbox{acts on $\pi^J$ by the scalar ${\rm tr}(s(\pi) \rtimes \tau \, | \, V_{\bar{\lambda}, 1})$}
\end{equation*}
whenever $\pi$ is an irreducible smooth representation of $G(F)$ with $\pi^J \neq 0$ and Satake parameter $s(\pi)$ \textup{(}see \cite{H15}\textup{)}. Furthermore, in terms of Bernstein functions $z_{\bar{\nu}, J} \in \mathcal Z(G(F), J)$ and certain Kazhdan-Lusztig polynomials $P_{w_{\bar{\nu}}, w_{\bar{\lambda}}}(q^{1/2})$ associated to the affine Hecke algebra with parameters ${\bf H}(\widetilde{W}^\tau, S^\tau_{\rm aff}, L)$ there is an equality 
\begin{equation} \label{C_intro_eq}
C_{\bar{\lambda}, J} = \sum_{\bar{\nu} \in {\mathcal Wt}(\bar{\lambda})^{+, \tau}} P_{w_{\bar{\nu}}, w_{\bar{\lambda}}} (1) \, z_{\bar{\nu}, J}.
\end{equation}
\end{thmx}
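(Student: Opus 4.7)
The plan is to define $C_{\bar\lambda, J}$ directly by the explicit formula $(\ref{C_intro_eq})$, establish that it yields a basis via unitriangularity of the transition from the Bernstein basis, and then reduce the spectral characterization to a $\tau$-twisted Lusztig--Kato character identity on $\widehat G^I \rtimes \tau$.

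First, I would recall the parahoric Bernstein basis. By work of Bernstein and Haines--Rostami (building on \cite{HR08}), the center $\mathcal{Z}(G(F),J)$ has a $\mathbb C$-basis $\{z_{\bar\nu, J}\}_{\bar\nu}$ indexed by $\bar\nu \in X^*(\widehat T^I)^{+,\tau}$, where dominance refers to $\Sigma_1$, which by Theorem B is the Macdonald root system underlying $\mathbf H(\widetilde W^\tau, S^\tau_{\rm aff}, L)$. The characterizing property of $z_{\bar\nu, J}$ is that on every $\pi^J$ it acts by the $\tau$-twisted monomial scalar $m_{\bar\nu}(s(\pi)\rtimes\tau) := \sum_{\bar\nu' \in W_0^\tau \cdot \bar\nu} \bar\nu'(s(\pi)\rtimes\tau)$, with $s(\pi)$ the Satake parameter of \cite{H15}. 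Next, defining $C_{\bar\lambda, J}$ by $(\ref{C_intro_eq})$ and using $P_{w_{\bar\lambda}, w_{\bar\lambda}}(1) = 1$ together with the support property $P_{w_{\bar\nu}, w_{\bar\lambda}}(1) = 0$ unless $\bar\nu \leq \bar\lambda$ in the dominance order on $X^*(\widehat T^I)^{+,\tau}$ (a special case of which is precisely $\bar\nu \in \mathcal{Wt}(\bar\lambda)^{+,\tau}$), the change-of-basis matrix from $\{z_{\bar\nu, J}\}$ to $\{C_{\bar\lambda, J}\}$ is unitriangular, so $\{C_{\bar\lambda, J}\}$ is a basis.

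To establish the spectral characterization, I would read off the action of $C_{\bar\lambda, J}$ on $\pi^J$ directly from $(\ref{C_intro_eq})$ and the monomial action of the Bernstein basis: the scalar is $\sum_{\bar\nu} P_{w_{\bar\nu}, w_{\bar\lambda}}(1) \, m_{\bar\nu}(s(\pi)\rtimes\tau)$. It then suffices to prove the $\tau$-twisted Lusztig--Kato character identity
\[
\mathrm{tr}\bigl(x \rtimes \tau \mid V_{\bar\lambda,1}\bigr) \;=\; \sum_{\bar\nu \in \mathcal{Wt}(\bar\lambda)^{+,\tau}} P_{w_{\bar\nu}, w_{\bar\lambda}}(1) \, m_{\bar\nu}(x \rtimes \tau)
\]
as an identity of class functions on $\widehat G^I \rtimes \tau$, restricted to $\widehat T^I \rtimes \tau$. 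Evaluating at $x = s(\pi)$ then matches the two scalars term-by-term, proving the required action.

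The main obstacle is this twisted character identity. In the split, equal-parameter case it is Lusztig's classical theorem, provable either via geometric Satake or by explicit spherical-Hecke-algebra manipulations. In the present quasi-split setting the root system $\Sigma_1$ carries unequal parameters, and one must work inside $\mathbf H(\widetilde W^\tau, S^\tau_{\rm aff}, L)$; the essential input provided by Theorem B is that the parameter function $L$ is precisely the one attached to the disconnected reductive group $\widehat G^I \rtimes \tau$, so that Lusztig's argument, in the unequal-parameter formulation due to Macdonald, adapts verbatim. Alternatively, one can extract the identity from $\tau$-equivariant geometric Satake for the partial affine flag variety attached to $J$, by decomposing the $\tau$-fixed intersection cohomology sheaves into constituents indexed by $\mathcal{Wt}(\bar\lambda)^{+,\tau}$; the IC-stalk computation of Kazhdan--Lusztig then delivers $P_{w_{\bar\nu}, w_{\bar\lambda}}(1)$ as the multiplicity.
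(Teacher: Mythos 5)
Your overall architecture matches the paper's: define $C_{\bar{\lambda},J}$ by an explicit expansion in the Bernstein basis, get a basis from unitriangularity (the coefficient of $z_{\bar{\lambda},J}$ being $1$), and reduce the spectral characterization to a character identity on $\widehat{T}^I\rtimes\tau$ relating the twisted trace on $V_{\bar{\lambda},1}$ to the Kazhdan--Lusztig coefficients $P_{w_{\bar{\nu}},w_{\bar{\lambda}}}(1)$. You have correctly isolated that identity as the crux. The gap is that you do not actually prove it: the assertion that ``Lusztig's argument, in the unequal-parameter formulation due to Macdonald, adapts verbatim'' glosses over the essential difficulty, namely that the left-hand side is a \emph{twisted} trace ${\rm tr}(x\rtimes\tau\mid V_{\bar{\lambda},1})$ on a possibly disconnected group $\widehat{G}^I\rtimes\langle\tau\rangle$, whereas the Lusztig--Kato machinery (in any parameter regime) computes \emph{untwisted} weight multiplicities of a highest weight module. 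Some device is needed to convert one into the other, and no adaptation of the Hecke-algebra manipulations supplies it by itself.

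The paper's proof splits the identity into exactly two halves joined by Theorem~B. First, Jantzen's twining character formula (the Twisted Weyl Character Formula, Theorem~\ref{twisted_Weyl_0}) expresses $\sum_{\bar{\nu}}{\rm tr}(\tau\mid V_{\bar{\lambda},1}(\bar{\nu}))e^{\bar{\nu}}$ as an ordinary Weyl-character sum for the root system $N'_\tau(\breve{\Sigma}^\vee)$; Theorem~\ref{Knop_comp_thm} identifies this with $\widetilde{\Sigma}_0^\vee$, the Knop root system governing the unequal-parameter algebra ${\bf H}(\widetilde{W}^\tau,S^\tau_{\rm aff},L)$. Second, Knop's unequal-parameter Lusztig--Kato formula (Theorem~\ref{Lus_char_form}) evaluates that same Weyl-character sum as $\sum_{\bar{\nu}}P_{w_{\bar{\nu}},w_{\bar{\lambda}}}(1)e^{\bar{\nu}}$. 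Your sketch invokes Theorem~B for the parameter matching (correctly) but omits the twining character formula entirely, and that omission is not cosmetic: it is the only step that eliminates $\tau$ from the problem. (Your alternative route via $\tau$-equivariant geometric Satake is plausible in spirit --- it is essentially how Hong proves the twining formula in the simply connected case --- but as stated it is a one-sentence appeal to machinery not set up in this generality, e.g.\ for the disconnected reductive group $\widehat{G}^I$ and for general parahoric $J$.) One further small point: the reduction from $\widehat{G}^I$ to the connected group $\widehat{G}^{I,\circ}$ (via Lemma~\ref{ht_wt_lem}(4) and the observation that the torsion of $X_*(T)_I$ sits in $\Omega_{\breve{\bf a}}$, so the KL polynomials are unchanged) is needed before either character formula applies, and is absent from your sketch.
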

We call the $C_{\bar{\lambda}, J}$ the {\em geometric basis} elements for the following reason: when $F = \mathbb F_q(\!(t)\!)$ and $J = L^+P_{\bf f}(\mathbb F_q) = P_{\bf f}(\mathbb F_q[\![t]\!])$ is a very special maximal parahoric subgroup of $G(\mathbb F_q(\!(t)\!))$, $C_{\bar{\lambda},J}$ is the element of $\mathcal H(G(\mathbb F_q(\!(t)\!)), J)$ arising, via the function-sheaf dictionary, from the equivariant perverse sheaf on the affine flag variety $LG/L^+P_{\bf f}$ which corresponds to $V_{\bar{\lambda}} \in {\rm Rep}(\widehat{G}^I)$ under the geometric Satake isomorphism (see \cite{Ric2}, \cite{Zhu1}). 
Moreover, in the case where $G/\mathbb F_q(\!(t)\!)$ is {\em split} and $J = P_{\bf a}(\mathbb F_q[\![t]\!])$ is an {\em Iwahori} subgroup, we may interpret (\ref{C_intro_eq}) as the identity proved by Gaitsgory \cite{Ga} (see also \cite{HN02, GH}) 
$$
{\rm tr}^{\rm ss}(\tau \, | \, R\Psi^{M_{\mu}}) = \sum_{\lambda \preceq \mu} m_\mu(\lambda) z_\lambda,
$$
where $M_{\mu}$ is the piece of the Beilinson-Gaitsgory degeneration indexed by the dominant cocharacter $\mu$, $R\Psi^{M_\mu}$ is the (suitably normalized) complex of nearby cycles (with corresponding function ${\rm tr}^{\rm ss}(\tau \, | \, R\Psi^{M_\mu})$), and where for dominant $\lambda, \mu \in X^*(\widehat{T})$, $m_\mu(\lambda)$ is the multiplicity of the $\lambda$-weight space in $V_\mu$.

This is all connected with expectations related to the stable Bernstein center and the geometric Bernstein center of \cite{H14}. From $V_\mu \in {\rm Rep}(\widehat{G} \rtimes \mathcal W_F)$ we construct an element $Z_{V_\mu}$ of the geometric Bernstein center (see $\S\ref{some_elts_sec}$), and we can regard it as a distribution in the usual Bernstein center.  By convolving this distribution with the characteristic function $1_J$, we get an element $Z_{V_\mu} * 1_J \in \mathcal Z(G(F), J)$. For $\tau$-fixed and dominant $\bar{\lambda}$, the multiplicity space ${\mathbb H}_\mu(\bar{\lambda})$ of $V_{\bar{\lambda}}$  in $V^I_\mu|_{\widehat{G}^I}$ carries an action of $\tau$. Using this, in Theorem \ref{geom_center_thm} we give the following explicit formula in terms of the geometric basis
$$
Z_{V_\mu} * 1_J = \sum_{\bar{\lambda} \in {\mathcal Wt}(\bar{\mu})^{+, \tau}} {\rm tr}(\tau \, | \, \mathbb H_\mu(\bar{\lambda})) \, C_{\bar{\lambda}, J}.
$$
In $\S\ref{Shim_var_sec}$, we explain how this gives an explicit form of the Test Function Conjecture for Shimura varieties with parahoric level at $p$ (see \cite[$\S7$]{H14}), in the case where $G ={\bf G}_{\mathbb Q_p}$ is quasi-split over the relevant extension of $\mathbb Q_p$.

\smallskip

We now give an outline of the contents of the paper. In $\S\ref{notation_sec}$ we list some notation that will be used throughout the article. In $\S\ref{duality_notions_sec}$ we define the four operations giving ``$I$-fixed'' root systems, and prove the basic duality result in Proposition \ref{dual_prop} (Theorem \ref{thm_A}). In $\S\ref{extremal_sec}$ we prove the Pappas-Rapoport-Smithling conjecture (Theorem \ref{thm_C}) and also explain an earlier geometric approach in $\S\ref{geom_sec}$.  In $\S\ref{dual_grp_sec}$ we explain how the duality operations on root systems are related to the operation of taking $I$-fixed points in the dual group $\widehat{G}$, and we construct highest weight representations of the possibily disconnected reductive group $\widehat{G}^I$; these play a role in $\S\ref{geom+stable-center_sec}$. In section $\S\ref{BT-Knop-Mac_sec}$, we prove Theorem \ref{thm_B}, in several steps. In $\S\ref{geom+stable-center_sec}$ we prove Theorem \ref{thm_D}. This relies on the Twisted Weyl Character formula (recalled in Theorem \ref{twisted_Weyl}) and Knop's version of the Lusztig character formula (Theorem \ref{Lus_char_form}); note that Theorem \ref{thm_B} is a key ingredient used to relate these two formulas.  Also in $\S\ref{geom+stable-center_sec}$ we prove the formula for $Z_{V_\mu} * 1_J$ (Theorem \ref{geom_center_thm}).  Finally, in $\S\ref{Shim_var_sec}$, we explain the implications of this formula for the Test Function Conjecture for parahoric level \cite[$\S7$]{H14}.
\medskip

\noindent {\em Acknowledgements:} I thank Xuhua He for useful conversations around the subject of this note, and I thank Michael Rapoport and Xinwen Zhu for their remarks on an early version of the manuscript.  I am grateful to Xuhua He and Yiqiang Li for informing me of the reference \cite{Hong}. Finally, thanks go to the referee for suggestions which helped to improve the exposition.

 %I am grateful to Xinwen Zhu for his comments, especially those concerning the geometric %approach. I am also grateful to Xuhua He and Yiqiang Li, from whom I learned of the reference %\cite{Hong}.

\section{General notation} \label{notation_sec}

Given a reductive group $G$, we let $G_{\rm sc}$ denote the simply-connected cover of the derived group $G_{\rm der}$, and let $T_{\rm sc}$ denote the pull-back along $G_{\rm sc} \rightarrow G$ of a maximal torus $T$; note $T_{\rm sc}$ is a maximal torus of $G_{\rm sc}$.

For any possibly non-reduced root system $R$ in an $\mathbb R$-vector space $V$, let $R_{\rm red}$ (resp.\, $R^{\rm red}$) denote the reduced root system we get by discarding all roots of the form $2a$ (resp.\,$\frac{1}{2}a$) where $a \in R$.

For any based root system $R$, we let $R^+$ denote the positive roots.  For a lattice of (co)weights $X$, we let $X^+$ denote the subset of dominant elements.

%For an element $\lambda \in X_*(T)_I$, we write $t_\lambda$ when viewing it as a ``translation'' %element in the extended affine Weyl group $X_*(T)_I \rtimes \breve{W}$.  In general $X_*(T)_I$ %might have torsion, and we write $X_*(T)_I/{\rm tors}$ for the quotient by its torsion subgroup; %this quotient acts freely, by translations, on the real vector space $X_*(T)_I \otimes \mathbb R$.

\section{Notions of duality for root systems with automorphisms} \label{duality_notions_sec}

Let $\Phi \supset \Delta$ be a  based root system in an $\mathbb R$-vector space $V$. We assume $\Phi$ is reduced. Suppose a finite group $I \subset {\rm Aut}_\mathbb R(V)$ preserves $\Phi$ and $\Delta$.  Then we say $I$ is a group of automorphisms of the root system $(V, \Phi, \Delta)$.  Choose a positive definite symmetric bilinear form $(\cdot \, | \, \cdot)$ on $V$. We may assume it is $I$-invariant. We also use $(\cdot \, | \, \cdot)$ to identify $V$ with its $\mathbb R$-linear dual. For $v \in V - \{ 0 \}$ we define
$$
v^\vee := \frac{2v}{(v\, | \, v)}.
$$
Then for $\alpha \in \Phi$ we have the reflection $s_\alpha$ on $V$ defined by $s_\alpha(v) = v - \frac{2(\alpha | v)}{(\alpha | \alpha)} \alpha$.  Then let $W \subset {\rm Aut}_{\mathbb R}(V)$ be the finite Weyl group of $(V, \Phi, \Delta)$. Recall that $(W, S)$ is a Coxeter system with $S := \{ s_{\alpha} \, | \, \alpha \in \Delta\}$.  Note that $I$ acts on the Coxeter system $(W,S)$. Also, $(\cdot \, | \, \cdot)$ is $W \rtimes I$-invariant.

We will construct four reduced root systems associated to the $I$-action on $(V, \Phi, \Delta)$, which live naturally in the vector spaces $V_I$ or $V^I$. Sometimes it will be convenient to identify these vector spaces, using the $I$-averaging map 
\begin{align*}
V_I \, &\overset{\sim}{\rightarrow} \, V^I \\
\bar{v} ~ &\mapsto ~ v^\diamond := \frac{1}{|Iv|} \sum_{\sigma \in I} \sigma(v),
\end{align*}
where $\bar{v}$ is the image of $v \in V$ in $V_I$.

\begin{defn} \label{norm+res_defn}
\begin{enumerate}
\item[(1)] For $v \in V$, define its norm  $\displaystyle N_I(v) := \sum_{v' \in Iv} v'$.  
\item[(2)] Let ${\rm res}_I(v) = \bar{v}$ denote the image of $v$ in $V_I$.
\item[(3)] For $\alpha \in \Delta$, define its {\rm modified norm}
$$
N'_I(\alpha) = \begin{cases} N_I(\alpha), \,\,\, \mbox{if the elements in $I\alpha$ are pairwise $(\cdot \, | \,\cdot)$-orthogonal} \\ 2\,N_I(\alpha), \,\,\, \mbox{otherwise.} \end{cases}
$$
\item[(4)] For $\alpha \in \Delta$, define its {\rm modified restriction}
$$
{\rm res}'_I(\alpha) = \begin{cases} {\rm res}_I(\alpha),\,\,\, \mbox{if the elements in $I\alpha$ are pairwise $(\cdot\,|\, \cdot)$-orthogonal}, \\
2\,{\rm res}_I(\alpha), \,\,\, \mbox{otherwise.} \end{cases} 
$$
\end{enumerate}
\end{defn}
 
For $\alpha \in \Phi^+$, its $I$-average $\alpha^\diamond$ is never 0 (since $I$ permutes the simple positive roots, which are linearly independent). If $\Phi$ is the set of absolute roots for a connected reductive group over $\breve{F}$, then ${\rm res}_I(\Phi^+) \cong (\Phi^+)^\diamond$ is precisely the set of positive relative roots $\breve{\Phi}^+$ for $G/\breve{F}$ (cf.\,e.g.\,\cite[15.5.1]{Sp}). 

\begin{lemma} \label{average_lemma}  Let $\alpha \in \Delta$, and let $I\alpha$ denote the $I$-orbit of $\alpha$. Then 
\begin{equation} \label{vee-diamond}
(\alpha^\vee)^\diamond = \begin{cases} \frac{1}{|I\alpha|} \,(\alpha^\diamond)^\vee\, \,\, \, , \,\,\,\,\,\mbox{if the roots in $I\alpha$ are pairwise $(\cdot\, | \, \cdot)$-orthogonal} \\
\frac{1}{2\,|I\alpha|}\, (\alpha^\diamond)^\vee\, , \,\,\,\, \mbox{otherwise}. \end{cases}
\end{equation}
\end{lemma}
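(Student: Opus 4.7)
The plan is to reduce both cases to a computation of the squared length $(\alpha^\diamond\,|\,\alpha^\diamond)$. First I would exploit the $I$-invariance of the form: since $(\sigma\alpha\,|\,\sigma\alpha)=(\alpha\,|\,\alpha)$ for every $\sigma\in I$, each $\beta\in I\alpha$ has $\beta^\vee=2\beta/(\alpha\,|\,\alpha)$, and averaging gives
$$
(\alpha^\vee)^\diamond \;=\; \frac{1}{|I\alpha|}\sum_{\beta\in I\alpha}\beta^\vee \;=\; \frac{2}{(\alpha\,|\,\alpha)}\,\alpha^\diamond.
$$
On the other hand $(\alpha^\diamond)^\vee = 2\alpha^\diamond/(\alpha^\diamond\,|\,\alpha^\diamond)$, so
$$
(\alpha^\vee)^\diamond \;=\; \frac{(\alpha^\diamond\,|\,\alpha^\diamond)}{(\alpha\,|\,\alpha)}\,(\alpha^\diamond)^\vee.
$$
Thus the lemma reduces to verifying that the ratio $(\alpha^\diamond\,|\,\alpha^\diamond)/(\alpha\,|\,\alpha)$ equals $1/|I\alpha|$ in the orthogonal case and $1/(2|I\alpha|)$ otherwise.

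Expanding the inner product as $(\alpha^\diamond\,|\,\alpha^\diamond)=|I\alpha|^{-2}\bigl[|I\alpha|(\alpha\,|\,\alpha)+\sum_{\beta\neq\gamma\in I\alpha}(\beta\,|\,\gamma)\bigr]$, the orthogonal case is immediate: the off-diagonal sum vanishes and the ratio is $1/|I\alpha|$, so the first formula holds.

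For the non-orthogonal case, the main step is to observe that $|I\alpha|=2$ and that $I\alpha=\{\alpha,\beta\}$ forms an $A_2$-pair, i.e.\ $(\alpha\,|\,\beta)=-(\alpha\,|\,\alpha)/2$. The set $I\alpha\subset\Delta$ is $I$-stable with transitive $I$-action, so the induced sub-Dynkin diagram on $I\alpha$ is regular with a vertex-transitive automorphism group. Because $I$ preserves the form, all roots of $I\alpha$ share a single length and the sub-diagram is simply-laced; being an induced subgraph of a connected ADE Dynkin diagram, each component is a tree of valence at most three. The only $k$-regular vertex-transitive trees are isolated vertices ($k=0$) and single edges ($k=1$), and the non-orthogonality assumption rules out the former on every component. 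Since two adjacent simple roots of equal length satisfy $\langle\alpha,\beta^\vee\rangle=-1$, only a single edge is possible, giving $|I\alpha|=2$ and $(\alpha\,|\,\beta)=-(\alpha\,|\,\alpha)/2$.

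Given this structural fact, the calculation finishes quickly: $\alpha^\diamond=(\alpha+\beta)/2$ yields
$$
(\alpha^\diamond\,|\,\alpha^\diamond) \;=\; \tfrac{1}{4}\bigl[(\alpha\,|\,\alpha)+2(\alpha\,|\,\beta)+(\beta\,|\,\beta)\bigr] \;=\; \tfrac{(\alpha\,|\,\alpha)}{4} \;=\; \frac{(\alpha\,|\,\alpha)}{2|I\alpha|},
$$
which gives the second formula. The main obstacle is the structural claim that a non-orthogonal $I$-orbit of simple roots has size exactly two; once this is in place, the rest is clean bookkeeping with the $I$-invariant inner product.
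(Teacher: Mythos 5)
Your reduction to the ratio $(\alpha^\diamond\,|\,\alpha^\diamond)/(\alpha\,|\,\alpha)$ and your treatment of the orthogonal case match the paper's proof exactly. For the non-orthogonal case the paper reduces to irreducible $\Phi$ (using that $I$ permutes the irreducible components and that cross-component roots are orthogonal) and then invokes the classification of diagram automorphisms to see that only $A_{2n}$ with the flip occurs; you instead try a purely graph-theoretic argument, which is an attractive idea but as written contains a false step.

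The gap is your structural claim that a non-orthogonal orbit has $|I\alpha|=2$. Your own argument only shows that the induced subgraph on $I\alpha$ is a $1$-regular forest, i.e.\ a perfect matching; it need not be a single edge. Concretely, take $\Phi$ of type $A_2\times A_2$ with simple roots $\alpha_1,\beta_1,\alpha_2,\beta_2$ and let $I=\langle\sigma\rangle$ where $\sigma$ is the order-$4$ diagram automorphism $\alpha_1\mapsto\alpha_2\mapsto\beta_1\mapsto\beta_2\mapsto\alpha_1$. Then $I\alpha_1$ is all four simple roots, which are not pairwise orthogonal, yet $|I\alpha_1|=4$; the induced subgraph is two disjoint edges. (Such configurations genuinely arise in the paper's application, e.g.\ for restrictions of scalars of quasi-split unitary groups.) Since your final computation uses $\alpha^\diamond=(\alpha+\beta)/2$, it only covers the case of one edge. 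The repair is short: if the orbit is a matching of $m$ edges $\{\alpha_i,\beta_i\}$, the $m$ vectors $v_i=(\alpha_i+\beta_i)/2$ are pairwise orthogonal of common squared length $(\alpha\,|\,\alpha)/4$, and $\alpha^\diamond$ is their average, so your own orthogonal-averaging identity gives $(\alpha^\diamond\,|\,\alpha^\diamond)=\frac{1}{m}\cdot\frac{(\alpha\,|\,\alpha)}{4}=\frac{(\alpha\,|\,\alpha)}{2\,|I\alpha|}$, as required. With that correction your argument is complete and has the merit of avoiding the classification of diagram automorphisms that the paper relies on.
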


\begin{proof}
Clearly $(\alpha^\vee)^\diamond = \frac{2}{(\alpha \, | \, \alpha)}\, \alpha^\diamond = \frac{(\alpha^\diamond \, | \, \alpha^\diamond)}{(\alpha \, | \, \alpha)} (\alpha^\diamond)^\vee$.  Therefore we just need to compute the ratio $\frac{(\alpha^\diamond\, | \, \alpha^\diamond)}{(\alpha \, | \, \alpha)}$. The group $I$ permutes the irreducible subsystems in $\Phi$, each of which carries an action of its stabilizer in $I$. Also, if $v= v_1, \dots, v_k \in V$ are pairwise orthogonal vectors with $(v_i \, | \, v_i) = (v \, | \, v)$ for all $i$ and with average $v^\diamond$, then $(v^\diamond \, | \, v^\diamond) = \frac{1}{k}(v \, | \, v)$. These two remarks reduce us to considering irreducible $\Phi$, and also prove the first case of the lemma.  The second case only arises when $\Phi$ is of type $A_{2n}$, and $I$ acts via the non-trivial diagram automorphism. In that case we may assume the $I$-orbit of $\alpha$ is $\{ e_n - e_{n+1}, \, e_{n+1} - e_{n+2} \}$ and $(\cdot\, | \, \cdot)$ is the standard inner product on $\mathbb R^{2n+1}$, and the lemma follows by direct computation.
\end{proof}

\begin{defn} \label{4_defn}
We define four reduced root systems, the first two in $V^I$ and the latter two in $V_I$:
\begin{enumerate}
\item[(1)] $N_I(\Phi) := W^I N_I(\Delta)$
\item[(2)] $N'_I(\Phi) := W^I N'_I(\Delta)$
\item[(3)] ${\rm res}_I(\Phi) := W^I {\rm res}_I(\Delta)$
\item[(4)] ${\rm res}'_I(\Phi) := W^I {\rm res}'_I(\Delta)$.
\end{enumerate}
The root systems $$N_I(\Phi), \, N'_I(\Phi), \, {\rm res}_I(\Phi), \, {\rm res}'_I(\Phi)$$ come equipped with natural bases $$N_I(\Delta), \, N'_I(\Delta), \, {\rm res}_I(\Delta), \, {\rm res}'_I(\Delta),$$
and all have Weyl groups naturally identified with $W^I$. 
\end{defn}

\begin{Remark}
%It is well-known (see e.g.\,\cite{Mic}) that $(W^I, \{w_\delta\})$ is a Coxeter system, where 5$\{\delta \}$ is the set of $I$-orbits on $\Delta$ and $w_\delta$ is the longest element of the %parabolic subgroup $W_\delta = \langle s_\alpha | \alpha \in \delta \rangle$. 
To show that ${\rm res}_I(\Phi)$ and ${\rm res}'_I(\Phi)$ are root systems and have Weyl groups identical to $W^I$, one may use, for instance, the argument of \cite[Lem.\,4.2]{H15}. Indeed, ${\rm res}_I(\Phi)$ (resp.\,${\rm res}'_I(\Phi)$) identifies with the set of short (resp.\,long) vectors in the set of all $I$-averages of elements of $\Phi$,  and the latter set is a possibly non-reduced root system; further, both short and long subsystems have $W^I$ as Weyl group, as explained in the proof of \cite[Lem.\,4.2]{H15}. Then by Proposition \ref{dual_prop} below, we can also show that $N_I(\Phi)$ and $N'_I(\Phi)$ are root systems and we may also identify their Weyl groups with $W^I$.
\end{Remark}

The following proposition shows that the operations ${\rm res}_I$ and $N'_I$ (resp.,\,${\rm res}'_I$ and $N_I$) are dual to each other, as claimed in Theorem \ref{thm_A}.

\begin{prop} \label{dual_prop} Let $(V, \Phi^\vee, \Delta^\vee)$ be the dual root system to $(V, \Phi, \Delta)$ defined using $(\cdot \, | \, \cdot)$, endowed with the induced action by the group $I$. Via the isomorphism $V_I \overset{\sim}{\rightarrow} V^I$, there are identifications
\begin{align*}
{\rm res}_I(\Phi^\vee)^\vee &= N'_I(\Phi) \\
{\rm res}'_I(\Phi^\vee)^\vee &= N_I(\Phi).
\end{align*}
\end{prop}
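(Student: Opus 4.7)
The plan is to verify both identities on the simple roots by direct computation with the formula of Lemma \ref{average_lemma}, and then promote the result to the full root systems using the fact, recorded in the Remark after Definition \ref{4_defn}, that all four systems have $W^I$ as Weyl group.

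First, I would fix $\alpha\in\Delta$ and unwind the composite $V_I \xrightarrow{\sim} V^I$ applied to ${\rm res}_I(\alpha^\vee)=\overline{\alpha^\vee}$: by definition of the averaging isomorphism this is the vector $(\alpha^\vee)^\diamond \in V^I$. Since $\alpha$ and $\alpha^\vee$ are positive scalar multiples of each other, the $I$-orbits $I\alpha$ and $I\alpha^\vee$ satisfy the same pairwise orthogonality condition, so Lemma \ref{average_lemma} applies and gives $(\alpha^\vee)^\diamond = c_\alpha\,(\alpha^\diamond)^\vee$, where $c_\alpha = 1/|I\alpha|$ in the orthogonal case and $c_\alpha = 1/(2|I\alpha|)$ otherwise. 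Taking duals in $V^I$ (with respect to the restriction of $(\cdot\,|\,\cdot)$, which is well defined because the form is $I$-invariant) and using the elementary scaling identity $(cv)^\vee = c^{-1}v^\vee$ valid for any nonzero scalar $c$, we obtain
\[
{\rm res}_I(\alpha^\vee)^\vee \;=\; c_\alpha^{-1}\, \alpha^\diamond \;=\; c_\alpha^{-1}\,\frac{1}{|I\alpha|}\,N_I(\alpha),
\]
which is $N_I(\alpha)$ in the orthogonal case and $2N_I(\alpha)$ otherwise; by Definition \ref{norm+res_defn}(3) this equals $N'_I(\alpha)$. Hence ${\rm res}_I(\Delta^\vee)^\vee = N'_I(\Delta)$.

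For the second identity, the same computation with ${\rm res}'_I$ in place of ${\rm res}_I$ works: ${\rm res}'_I(\alpha^\vee)$ equals ${\rm res}_I(\alpha^\vee)$ in the orthogonal case and $2\,{\rm res}_I(\alpha^\vee)$ otherwise, so in either case the scaling factor in front of $(\alpha^\diamond)^\vee$ is $1/|I\alpha|$. Dualizing yields $|I\alpha|\,\alpha^\diamond = N_I(\alpha)$. Thus ${\rm res}'_I(\Delta^\vee)^\vee = N_I(\Delta)$.

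To conclude, I would note that both members of each equality are based reduced root systems in $V^I$ whose Weyl groups are canonically $W^I$ (Definition \ref{4_defn} and the Remark thereafter), and a based root system is determined by its Weyl group and its set of simple roots. Since the simple roots of ${\rm res}_I(\Phi^\vee)^\vee$ and $N'_I(\Phi)$ coincide by the computation above, and likewise for the pair ${\rm res}'_I(\Phi^\vee)^\vee$ and $N_I(\Phi)$, the two identifications of Proposition \ref{dual_prop} follow. The only real obstacle is keeping the scalar factors of Lemma \ref{average_lemma} straight and matching the case distinction in the definition of $N'_I$ and ${\rm res}'_I$ with the orthogonality condition on $I\alpha$; once that bookkeeping is done the proof is a one-line dual-vector calculation.
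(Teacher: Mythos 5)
Your proof is correct and follows the same route as the paper: the paper's proof is exactly the one-line verification that Lemma \ref{average_lemma} gives ${\rm res}_I(\alpha^\vee)^\vee = N'_I(\alpha)$ and ${\rm res}'_I(\alpha^\vee)^\vee = N_I(\alpha)$ for $\alpha \in \Delta$, from which the statement follows since all four systems are the $W^I$-orbits of their bases. You have simply made explicit the scalar bookkeeping and the final passage from simple roots to the full root systems.
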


\begin{proof}
Using Lemma \ref{average_lemma}, we see that ${\rm res}_I(\alpha^\vee)^\vee = N'_I(\alpha)$ and ${\rm res}'_I(\alpha^\vee)^\vee = N_I(\alpha)$ for $\alpha \in \Delta$. The proposition follows.  
\end{proof}

\begin{Remark} \label{KS_rmk}
The essential point was already made in \cite[(1.3.9)]{KS}.
\end{Remark}

\smallskip

\noindent {\bf Example.}  Let $V = \mathbb R^{2n+1}$, let $\Phi = \{ \pm(e_i - e_j)\, | \, 1 \leq i < j \leq 2n+1\}$ be the standard Type $A_{2n}$ root system, and consider the standard simple roots $\Delta = \{ e_i - e_{i+1} \, | \, 1 \leq i \leq 2n \}$. Let $I = \langle \tau \rangle$ where $\tau$ operates by 
$$
\tau(x_1,\,\dots, x_{2n+1}) = (-x_{2n+1}, \dots, -x_1).
$$
Then $\tau$-averaging gives an isomorphism $V_\tau \overset{\sim}{\rightarrow} V^\tau = \{(x_1, \dots, x_n, 0, -x_n, \dots, -x_1) \, | \, x_i \in \mathbb R\}$. We identify $V^\tau = \mathbb R^n$ in the obvious way.  Then inside $V^\tau$ we describe the four sets of simple positive roots:
\begin{align*}
{\rm res}_\tau(\Delta) &= \{ \frac{e_1-e_2}{2}, \dots, \frac{e_{n-1}-e_n}{2}, \frac{e_n}{2}\} \hspace{1in} \mbox{Type $B_n$} \\
{\rm res}'_\tau(\Delta) &= \{ \frac{e_1 - e_2}{2}, \dots, \dfrac{e_{n-1}-e_n}{2}, e_n \} \hspace{1.04in} \mbox{Type $C_n$} \\
N_\tau(\Delta) &= \{ e_1-e_2, \dots, e_{n-1}-e_n, e_n \} \hspace{1.1in} \mbox{Type $B_n$} \\
N'_\tau(\Delta) &= \{ e_1-e_2, \dots, e_{n-1}-e_n, 2e_n \} \hspace{1.04in} \mbox{Type $C_n$}.
\end{align*}

\section{Extremal elements in admissible sets} \label{extremal_sec}

\subsection{Statement of Pappas-Rapoport-Smithling conjecture} \label{PRS_stmt_subsec}

We consider a connected reductive group $G$ over the field $\breve{F}$. Recall that $G/\breve{F}$ is quasi-split by Steinberg's theorem. Let $S$ be a maximal $\breve{F}$-split torus in $G$ with centralizer the maximal torus $T$. Let $W = W(G,T)$ be the absolute Weyl group and let $\breve{W} = W(G, S)$ be the relative Weyl group.  Let $\widetilde{W} = N_G(T)(\breve{F})/T(\breve{F})_1$ be the Iwahori-Weyl group (cf.\,e.g.\,\cite[$\S4$]{PRS}); recall $T(\breve{F})_1$ is the kernel of the Kottwitz homomorphism $\kappa_T: T(\breve{F}) \rightarrow X_*(T)_I$ of \cite[$\S7$]{Ko97}.  We fix a special vertex $o$ in the apartment $V = X_*(T)_I \otimes \mathbb R$. Given $o$, there is a natural action of $\widetilde{W}$ on the apartment such that we may identify $\breve{W}$ with the  fixer of $o$ in $\widetilde{W}$. We thus get a decomposition $\widetilde{W} = X_*(T)_I \rtimes \breve{W}$ (identifying $\widetilde{W}$ with the {\em extended affine Weyl group} over $\breve{F}$). For an element $\lambda \in X_*(T)_I$, we write $t_\lambda$ when viewing it as a ``translation'' element in $X_*(T)_I \rtimes \breve{W}$.  In general $X_*(T)_I$ might have torsion, and we write $X_*(T)_I/{\rm tors}$ for the quotient by its torsion subgroup; this quotient acts freely, by translations, on the real vector space $X_*(T)_I \otimes \mathbb R$.

The group $\widetilde{W}$ carries a Bruhat order $\leq$ in a natural way (see $\S \ref{PRS_proof_sec}$). 

Fix a geometric conjugacy class of cocharacters $\{ \mu \} \subset X_*(G)$. Let $\widetilde{\Lambda}_{\{ \mu \}}$ be those elements of $\{ \mu \}$ whose image in $X_*(T) \otimes \mathbb R$ lies in the closure of a Weyl chamber corresponding to an $\breve{F}$-rational Borel subgroup of $G$. Fix a representative $\mu \in X_*(T)$ in the geometric conjugacy class $\{ \mu \}$, with $\mu \in \widetilde{\Lambda}_{\{\mu \}}$. Thus $\widetilde{\Lambda}_{\{ \mu \}} = \breve{W} \mu$, since all $\breve{F}$-rational Borel subgroups containing $T$ are $\breve{W}$-conjugate. Denote by $\bar{\nu}$ the image of $\nu \in X_*(T)$ in $X_*(T)_I$, and let $\Lambda_{\{\mu \}}$ be the image of $\widetilde{\Lambda}_{\{ \mu \}}$. Then $\Lambda_{\{ \mu \}} = \breve{W} \bar{\mu}$.

The following conjecture of Pappas-Rapoport-Smithling \cite[Conj.\,4.22]{PRS} is one of the main motivations for this article.

\begin{Conj} \label{main_conj}
Let $\{\bar{\mu} \}$ denote the image of the $W$-conjugacy class $\{ \mu \}$ in $X_*(T)_I$. Then the set of maximal elements in $\{ \bar{\mu} \}$ with respect to the Bruhat order is precisely the set $\Lambda_{\{ \mu \} }$.
\end{Conj}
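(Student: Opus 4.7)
The plan is to reduce Conjecture \ref{main_conj} to a convex-hull statement in the apartment $V = X_*(T)_I \otimes \mathbb{R}$, which is resolved cleanly using the duality of Theorem \ref{thm_A} and the identification $\breve{\Sigma} = N'_I(\Phi)$ of Theorem \ref{thm_B}, combined with standard length and admissibility arguments in the extended affine Weyl group $\widetilde{W}$.

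First I would show that the Bruhat-maximal elements of $\{\bar{\mu}\}$ coincide with those of maximal length in $\widetilde{W}$. The length formula $\ell(t_{\bar{\lambda}}) = \sum_{a \in \breve{\Sigma}^+}|\langle a, \bar{\lambda}\rangle|$ is $\breve{W}$-invariant in $\bar{\lambda}$, so $\ell(t_{\bar{\lambda}}) = \langle 2\rho_{\breve{\Sigma}}, \bar{\lambda}^+\rangle$ where $\bar{\lambda}^+$ denotes the $\breve{W}$-dominant representative. The direction ``maximal length $\Rightarrow$ Bruhat-maximal'' is immediate, since strict Bruhat forces strict length increase. For the converse, given $\bar{\lambda} \in \{\bar{\mu}\}$ of non-maximal length one needs some $\bar{\mu}' \in \Lambda_{\{\mu\}}$ with $t_{\bar{\lambda}} < t_{\bar{\mu}'}$; this follows from a standard admissibility argument in $\widetilde{W}$, because once $\bar{\lambda} \in \conv(\Lambda_{\{\mu\}})$ is established (the next step), one has $t_{\bar{\lambda}} \in \Adm(\{\mu\})$, giving the required $\bar{\mu}'$, with the length comparison upgrading $\leq$ to strict $<$.

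The crucial input is the convex-hull containment: $\overline{w\mu} \in \conv(\breve{W}\bar{\mu})$ for every $w \in W$, with equality (i.e.\ $\overline{w\mu}$ a vertex) iff $\overline{w\mu} \in \breve{W}\bar{\mu}$. Via the $I$-averaging isomorphism $V_I \overset{\sim}{\rightarrow} V^I$, this becomes $(w\mu)^\diamond \in \conv(W^I \mu^\diamond)$ in $V^I$. Taking $\mu$ to be $W$-dominant (possible because $\Delta$ is $I$-stable, since $G$ is quasi-split over $\breve{F}$), for any $w'' \in W$ we have $\mu - w''\mu = \sum_\alpha c_\alpha \alpha^\vee$ with $c_\alpha \geq 0$, and $I$-averaging yields $\mu^\diamond - (w''\mu)^\diamond = \sum_\alpha c_\alpha (\alpha^\vee)^\diamond$. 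By Theorems \ref{thm_A} and \ref{thm_B}, the elements $(\alpha^\vee)^\diamond$ are precisely the simple positive coroots of $\breve{\Sigma}^\vee = {\rm res}_I(\Phi^\vee)$ (viewed in $V^I$), whence $(w''\mu)^\diamond \leq \mu^\diamond$ in $\breve{\Sigma}$-dominance. Applying this with $w'' = w'w$ for arbitrary $w' \in W^I$, and using that $w' \in W^I$ commutes with every $\sigma \in I$, one gets $w'(w\mu)^\diamond = (w'w\mu)^\diamond \leq \mu^\diamond$, which characterizes membership $(w\mu)^\diamond \in \conv(W^I \mu^\diamond)$. The equality case forces $\overline{w\mu} \in \breve{W}\bar{\mu}$ because $\bar{\mu}$ is $\breve{W}$-dominant and the vertices of $\conv(\breve{W}\bar{\mu})$ are exactly $\breve{W}\bar{\mu}$.

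Main obstacle: The key new ingredient is the convex-hull containment, resolved elegantly via Theorems \ref{thm_A} and \ref{thm_B}. Without the dualities identifying $I$-averages of absolute simple coroots with simple coroots of $\breve{\Sigma}^\vee$, the convex-geometric analysis on $V^I$ would be obstructed by the non-reducedness of the na\"{i}ve restriction ${\rm res}_I(\Phi)$; the ``modified'' operations $N'_I$ and ${\rm res}_I$ yield precisely the reduced root system in which dominance, convex hulls, and $I$-averaging are all compatible. The admissibility statement --- that translations indexed by lattice points of $\conv(\Lambda_{\{\mu\}})$ all lie in $\Adm(\{\mu\})$ --- is a non-trivial but established ingredient from the Bruhat-Tits theory of $\widetilde{W}$.
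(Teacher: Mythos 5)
Your route is essentially the paper's. Theorem \ref{PRS_thm} is proved there by exactly the reduction you describe: one shows that $\overline{w\mu}$ is dominated by $\bar{\mu}$ in the dominance order attached to $\breve{\Sigma}^{\vee +}$, using $I$-averaging and Lemma \ref{average_lemma} to identify the averaged simple coroots $(\alpha^\vee)^\diamond$, $\alpha \in \Delta$, with the simple positive roots of $\breve{\Sigma}^\vee$ (Proposition \ref{on_Sigma}); one then invokes the criterion that $t_{\bar{\lambda}_{\rm dom}} \leq t_{\bar{\mu}}$ for dominant elements with $\bar{\mu}-\bar{\lambda}_{\rm dom} \in \mathbb Z_{\geq 0}\breve{\Sigma}^{\vee +}$ (Rapoport, Richarz), handles non-dominant conjugates by a length-preserving-conjugation lemma (Lemma \ref{lemma2}), and finishes by comparing lengths. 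Your convex-hull packaging of the dominance step and your length argument are both fine.

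The gap is in the step you label ``standard admissibility.'' The ingredient as you state it --- that translations indexed by lattice points of $\conv(\Lambda_{\{\mu\}})$ all lie in $\Adm(\{\mu\})$ --- is false: $t_{\bar{\lambda}} \leq t_{\bar{\mu}'}$ is only possible when the two translations lie in the same $W_{\rm aff}(\breve{\Sigma})$-coset of $\widetilde{W}$, i.e.\ when $\bar{\mu}' - \bar{\lambda} \in Q^\vee(\breve{\Sigma})$. (Already for ${\rm PGL}_2$ with $\mu = \alpha^\vee$, the midpoint of $\conv(W\alpha^\vee)$ is a cocharacter whose translation is incomparable with $t_{\alpha^\vee}$.) Moreover your convex-hull containment only yields \emph{real} dominance, $\bar{\mu} - (\overline{w\mu})^+ \in \mathbb R_{\geq 0}\breve{\Sigma}^{\vee +}$, whereas the Bruhat-order criterion requires the \emph{integral} statement. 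Both points are repaired by the same missing lemma, namely
$Q^\vee(\Phi)_I = X_*(T_{\rm sc})_I = Q^\vee(\breve{\Sigma})$
(equation (\ref{fund_equality}), from Haines--Rapoport): since $\mu - w\mu \in Q^\vee(\Phi)$, this places $\bar{\mu} - \overline{w\mu}$ in $Q^\vee(\breve{\Sigma})$, which gives the coset condition, and then real dominance plus the fact that the simple coroots of $\breve{\Sigma}$ form a basis of $Q^\vee(\breve{\Sigma})$ upgrades the coefficients to nonnegative integers. One must also note that $X_*(T)_I$ may have torsion, killed by the averaging map; this is harmless precisely because the differences in question lie in the torsion-free subgroup $Q^\vee(\breve{\Sigma})$. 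With these points supplied, your argument closes.
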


Note that Conjecture \ref{main_conj} only has content when $G/\breve{F}$ is non-split.  It arose in connection with the $\{ \mu \}$-admissible set, defined to be
\begin{equation} \label{adm0_defn}
{\rm Adm}(\{ \mu \}) = \{ w \in \widetilde{W} ~ | ~ w \leq t_{\bar{\mu}'}, \,\,\, \mbox{for some $\mu' \in \{ \mu \}$} \}.
\end{equation}
Conjecture \ref{main_conj} means that the extremal elements in this set are the translations of the form $t_{\bar{\mu}'}$ where $\bar{\mu}' \in \Lambda_{\{\mu\}}$.  It implies that the $\{ \mu \}$-admissible set as defined above coincides with the way it is defined in \cite[Def.\,4.23]{PRS}:
\begin{equation} \label{adm_defn}
{\rm Adm}(\{ \mu \}) = \{ w \in \widetilde{W} ~ | ~ w \leq t_{\bar{\mu}'}, \,\,\, \mbox{for some $\bar{\mu}' \in \Lambda_{\{ \mu \}}$} \}.
\end{equation}
The admissible set plays a role in the theory of local models of Shimura varieties and in related matters (see \cite{PRS}). In particular if $(G = {\bf G}_{\mathbb Q_p}, \{ \mu \})$ arises as the $p$-adic data associated to  Shimura data $({\bf G}/\mathbb Q, \{ h \})$ and if $M_\mu$ denotes a local model for the corresponding Shimura variety with Iwahori level structure at $p$, then the set ${\rm Adm}(\{ \mu \})$ is supposed to parametrize the Iwahori-orbits in the special fiber of $M_\mu$. Conjecture \ref{main_conj} means that $\Lambda_{\{\mu \}}$ should parametrize the irreducible components of the special fiber (see $\S\ref{geom_sec}$). For many other favorable combinatorial properties of ${\rm Adm}(\{ \mu \})$, see \cite{HH}.

In this paper we will prove Conjecture \ref{main_conj} (and thus Theorem \ref{thm_C}). In fact we will prove the following theorem, which implies Conjecture \ref{main_conj}. For $\nu_1, \nu_2 \in X_*(T)$, write $\nu_1 \preceq \nu_2$ if $\nu_2 -\nu_1$ is a sum of positive coroots in $\Phi^{\vee +}$; see $\S\ref{PRS_proof_sec}$. Define $${\mathcal Wt}(\mu) := \{ \nu \in X_*(T) ~ | ~ w\nu \preceq \mu, \,\, \forall w \in W\}$$ ($\mathcal Wt$ stands for ``weight'' and the term reflects the parallel -- established by Prop.\,\ref{on_Sigma} and Cor.\,\ref{G^I_dualroots_cor} -- with the dual group side; see $\S\ref{dual_grp_sec}$).   Note that $\{ \mu \} \subseteq {\mathcal Wt}(\mu)$.

\begin{theorem} \label{PRS_thm}
In the image $\overline{{\mathcal Wt}(\mu)} \subset X_*(T)_I$, the maximal elements with respect to the Bruhat order are precisely the elements of $\Lambda_{\{ \mu \}}$.
\end{theorem}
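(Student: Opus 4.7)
My strategy is to show each $\bar\nu \in \overline{\mathcal{Wt}(\mu)}$ is Bruhat-dominated by a specific element of $\Lambda_{\{\mu\}}$ sitting in the same $\breve{\Sigma}$-Weyl chamber as $\bar\nu$, and that the elements of $\Lambda_{\{\mu\}}$ are pairwise Bruhat-incomparable.

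For the first point, I pick $w \in \breve{W}$ so that $w\bar\nu$ lies in the $\breve{\Sigma}$-dominant chamber and set $\bar\lambda := w^{-1}\bar\mu_+ \in \Lambda_{\{\mu\}}$, where $\bar\mu_+$ is the $\breve{\Sigma}$-dominant representative of $\breve{W}\bar\mu$. First I note that $\bar\mu = \bar\mu_+$: since $\mu$ is $B$-dominant, $\langle\mu,\sigma\alpha\rangle \geq 0$ for every $\sigma \in I$ and $\alpha \in \Delta$, so $\langle\bar\mu, N'_I(\alpha)\rangle \geq 0$ for each simple $\breve{\Sigma}$-root $N'_I(\alpha) \in N'_I(\Delta)$ (Theorem \ref{thm_B}). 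Now lift $\bar\nu$ to $\nu \in \mathcal{Wt}(\mu)$. Since $\mathcal{Wt}(\mu)$ is $W$-stable, $w\nu \in \mathcal{Wt}(\mu)$, whence $\mu - w\nu \in \mathbb{Z}_{\geq 0}\Phi^{\vee,+}$; by Theorems \ref{thm_A} and \ref{thm_B}, $\mathrm{res}_I(\mathbb{Z}_{\geq 0}\Phi^{\vee,+}) \subseteq \mathbb{Z}_{\geq 0}\breve{\Sigma}^{\vee,+}$, so $\bar\mu_+ - w\bar\nu \in \mathbb{Z}_{\geq 0}\breve{\Sigma}^{\vee,+}$. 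Applying $w^{-1}$ yields
\[
\bar\lambda - \bar\nu \,\in\, \mathbb{Z}_{\geq 0}(w^{-1}\breve{\Sigma}^{\vee,+}),
\]
i.e., a non-negative integer combination of the $\breve{\Sigma}$-coroots positive with respect to the chamber of $\bar\nu$. The within-chamber Bruhat characterization of translations (extending Stembridge's dominant-chamber result to other chambers via $\breve W$-symmetry) then gives $t_{\bar\nu} \leq t_{\bar\lambda}$.

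For the second point, the length formula $\ell(t_{\bar\lambda}) = \sum_{\breve\alpha \in \breve{\Sigma}^+}|\langle\bar\lambda,\breve\alpha\rangle|$ is $\breve{W}$-invariant on $X_*(T)_I$, so all elements of $\Lambda_{\{\mu\}} = \breve{W}\bar\mu$ share a common length and cannot be strictly Bruhat-comparable. Combined with the first point, this identifies $\Lambda_{\{\mu\}}$ exactly as the set of maximal elements of $\overline{\mathcal{Wt}(\mu)}$.

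The main obstacle is the within-chamber Bruhat characterization invoked above. The classical statement (Stembridge; Kottwitz--Rapoport) is that for $\bar\nu_1, \bar\nu_2$ both $\breve{\Sigma}$-dominant, $t_{\bar\nu_1} \leq t_{\bar\nu_2}$ iff $\bar\nu_2 - \bar\nu_1 \in \mathbb{Z}_{\geq 0}\breve{\Sigma}^{\vee,+}$. Its extension to an arbitrary chamber by transporting the positive system through $w \in \breve W$ is the key technical point; once granted, Theorems \ref{thm_A} and \ref{thm_B} do the combinatorial work of transferring dominance from $X_*(T)$ (where $\mathcal{Wt}(\mu)$ lives) to $X_*(T)_I$ (where $\overline{\mathcal{Wt}(\mu)}$ lives) via the explicit identification $\breve{\Sigma}^\vee = \mathrm{res}_I(\Phi^\vee)$.
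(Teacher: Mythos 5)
Your first half runs parallel to the paper's: you transfer the relation $\mu-\nu\in\mathbb Z_{\geq 0}\Phi^{\vee,+}$ down to $X_*(T)_I$ using $\mathrm{res}_I(\Phi^\vee)=\breve{\Sigma}^\vee$, exactly as the paper does via (\ref{fund_equality}) and the inclusion (\ref{inclusion}) (the paper is slightly more careful here, checking the inclusion after the averaging map because $X_*(T)_I$ may have torsion, but your route through the linearity of $\mathrm{res}_I$ on the coroot lattice is essentially the same point). Your closing observation that the elements of $\Lambda_{\{\mu\}}$ are pairwise incomparable because $\ell(t_{\bar\lambda})$ is constant on $\breve{W}$-orbits is correct and makes explicit something the paper leaves implicit.

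The gap is the step you yourself flag as the ``main obstacle'': the within-chamber Bruhat criterion for a non-dominant chamber does \emph{not} follow from the dominant case ``via $\breve{W}$-symmetry.'' The Bruhat order on $\widetilde{W}$ is defined through the quasi-Coxeter structure $(W_{\rm aff}(\breve{\Sigma}),S_{\rm aff})\rtimes\Omega_{\breve{\bf a}}$, and conjugation by $w\in\breve{W}$ does not preserve $S_{\rm aff}$, hence does not preserve $\leq$; so one cannot transport the statement ``$t_{\bar\nu_1}\leq t_{\bar\nu_2}$ iff $\bar\nu_2-\bar\nu_1\in\mathbb Z_{\geq 0}\breve{\Sigma}^{\vee,+}$'' from the dominant chamber to $w^{-1}C_{\rm dom}$ by symmetry. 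The paper avoids ever needing this: it first compares the \emph{dominant} representatives, getting $t_{\bar\lambda_{\rm dom}}\leq t_{\bar\mu}$ from the known dominant-chamber result (\cite[Cor.\,1.8]{Ric1} or \cite[Lem.\,3.8]{Rap}), and then moves from $\bar\lambda_{\rm dom}$ to $\bar\lambda$ by a chain of length-preserving conjugations by elements of $S_{\rm aff}$, invoking Lemma \ref{lemma2} (= \cite[Lemma 4.5]{H01}) at each step. Note that this lemma only guarantees $sxs\leq t_{\bar\nu'}$ for \emph{some} $\bar\nu'\in\breve{W}\bar\nu$ --- the dominating translation is not controlled and need not be the ``chamber-matched'' element $w^{-1}\bar\mu_+$ you single out. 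Since the theorem only asserts domination by some element of $\Lambda_{\{\mu\}}$, this weaker conclusion suffices, and insisting on the specific target $w^{-1}\bar\mu_+$ forces you to prove a stronger (and here unproved) statement. To repair your argument, either supply a genuine proof of the arbitrary-chamber criterion, or replace that step by the paper's two-stage argument (dominant comparison plus Lemma \ref{lemma2}).
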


\subsection{A geometric approach to Conjecture \ref{main_conj}} \label{geom_sec}

Assume we are in the function field setting: $\breve{F} = k(\!(t)\!)$ and $\mathcal O_{\breve F} = k[\![t]\!]$, where $k = \overline{\mathbb F}_p$ for a prime $p$.\footnote{More generally, we may replace $k$ by any algebraically closed field of characteristic $p > 0$.} In this context, Timo Richarz \cite{Ric2} has defined ``local models'' $M_\mu$ over ${\rm Spec}(k[[t]])$ in complete generality. 

Consider a description of the special fiber $M_\mu \otimes_{\mathcal O_{\breve F}} k$ on reduced loci as the following union of Schubert varieties
\begin{equation} \label{equation}
M_\mu \otimes_{\mathcal O_{\breve F}} k = \bigcup_{w \in {\rm Adm}(\{\mu\})} S_w,
\end{equation}
where here ${\rm Adm}({\{\mu\}})$ is defined as in (\ref{adm_defn}). If such a description holds, then it is possible to deduce a proof of Conjecture \ref{main_conj}.  For example, Richarz gave such an argument for Conjecture \ref{main_conj} in \cite[Rem.\,3.13]{Ric2}, and proved the inclusion ``$\supseteq$'' of (\ref{equation}) holds in general. Further, Zhu's proof of the Coherence Conjecture \cite{Zhu2} implies that ``$\subseteq$'' holds in the case where $G/\breve{F}$ is tamely ramified ($M_\mu$ may be taken to represent his global Schubert variety). In fact, Zhu had earlier proved (\ref{equation}) in the function field case, also under some hypotheses of tame ramification for the group $G/\breve{F}$ \cite[Thm.\,3]{Zhu2}. 

The upshot is that Conjecture \ref{main_conj} was already known to hold in the function-field setting, when $G$ is tamely ramified over $\breve{F}$. It seems difficult to use the same method to go beyond the function-field setting and the case where $G$ is tamely ramified.  The advantage of the simple combinatorial proof of this article is clearly that it does not rely on any such hypotheses on the field $\breve{F}$ or on the group $G/\breve{F}$.\footnote{Subsequently, the author and Timo Richarz proved in \cite[Thm.\,6.12]{HaRi} that (\ref{equation}) always holds, with no tameness assumption. But Theorem \ref{PRS_thm} is an ingredient in this proof.}

\subsection{Proof of Theorem \ref{PRS_thm}} \label{PRS_proof_sec}

Before giving the proof, we specify how the root systems we consider arise from the group $G$. Let $\Phi$ be the set of absolute roots for $(G, T)$. Let us fix a Borel subgroup $B \supset T$ which is defined over $\breve{F}$. Write $B = T \prod_{\alpha > 0} U_\alpha$ (this determines the notion of ($B$-)positive root and dominant Weyl chamber in the vector space $X_*(T) \otimes \mathbb R$).  Let $\Delta$ denote the set of simple $B$-positive roots in $\Phi$, with simple coroots $\Delta^\vee$. Let $\breve{\Phi}$ be the set of relative roots for $(G,S)$, with set of simple $B$-positive roots denoted by $\breve{\Delta}$ and simple coroots by $\breve{\Delta}^\vee$. 

For any root system $R$ with positive roots $R^+$, we will consider the integral resp.~rational resp.~real cones $\mathbb Z_{\geq 0}R^{\vee +}$ resp.~$\mathbb Q_{\geq 0}R^{\vee +}$ resp.~$\mathbb R_{\geq 0}R^{\vee +}$ generated by the positive coroots. Of course the group generated by the former is the coroot lattice $Q^\vee(R) := \mathbb ZR^\vee$.

Recall $\widetilde{W}$ is the Iwahori-Weyl group for $G/\breve{F}$ and that it decomposes as $\widetilde{W} = X_*(T)_I \rtimes \breve{W}$ and acts in a natural way on $X_*(S) \otimes \mathbb R = X_*(T)^I \otimes \mathbb R$. Let $\breve{\bf a}$ be a base alcove in the apartment $X_*(S) \otimes \mathbb R$ of the Bruhat-Tits building $\mathcal B(G, \breve{F})$ corresponding to the torus $S$. We assume $\breve{\bf a}$ is the alcove in the $B$-dominant Weyl chamber whose closure contains the origin $o$ (our fixed special vertex). Let $\Omega_{\breve{\bf a}} \subset \widetilde{W}$ denote the subgroup stabilizing $\breve{\bf a}$.

By Bruhat-Tits theory, associated to $G_{\breve{F}}, S$ there is a reduced root system $\breve{\Sigma}$ whose affine Weyl group $W_{\rm aff}(\breve{\Sigma}) := \mathbb Z\breve{\Sigma}^\vee \rtimes \breve{W}$ may be identified with the Iwahori-Weyl group of $G_{\rm sc}/\breve{F}$ (we refer to \cite[$\S1.4$]{BT1} and \cite{HR08} for details); $\breve{\Sigma}$ is called an {\em \'{e}chelonnage root system for} $\breve{\Phi}$ by Bruhat-Tits. In general $\breve{\Sigma}$ is not contained in $\breve{\Phi}$, although every element of $\breve{\Sigma}$ is a positive multiple of an element of $\breve{\Phi}$ and vice versa. The group $W_{\rm aff}(\breve{\Sigma})$ acts on $X_*(S) \otimes \mathbb R$ in the obvious way and is a Coxeter group whose generators are the simple affine reflections $S_{\rm aff}$ through the walls of $\breve{\bf a}$. There is an isomorphism $\widetilde{W} = W_{\rm aff}(\breve{\Sigma}) \rtimes \Omega_{\breve{\bf a}}$. This is used to extend the Bruhat order $\leq$ and the length function $\ell(\cdot)$ from $W_{\rm aff}(\breve{\Sigma})$ to $\widetilde{W}$.

Recall that $\widetilde{\Lambda}_{\{\mu\}}$ denotes the elements of $\{ \mu \}$ whose image in $X_*(T) \otimes \mathbb R$ lies in the closure of a Weyl chamber corresponding to an $\breve{F}$-rational Borel subgroup of $G$. Fix a representative $\mu \in X_*(T)$ in the geometric conjugacy class $\{ \mu \}$, whose image in $X_*(T) \otimes \mathbb R$ lies in the closed Weyl chamber corresponding to $B$. Define $\Lambda_{\{\mu\}}$, ${\mathcal Wt}(\mu)$ and $\overline{{\mathcal Wt}(\mu)}$ as in subsection \ref{PRS_stmt_subsec}.

\medskip

We can now prove Theorem \ref{PRS_thm}.

\begin{proof}

Let $\lambda \in {\mathcal Wt}(\mu)$ be arbitrary. Our goal is to prove that $t_{\bar{\lambda}} \leq t_{\bar{\mu}'}$ for some $\bar{\mu}' \in \breve{W} \bar{\mu}$. The first step is to prove that $\bar{\lambda} \preceq \bar{\mu}$ in the partial ordering on $X_*(T)_I$ determined by the positive coroots $\breve{\Sigma}^{\vee +}$; we must show that $\bar{\mu} - \bar{\lambda} \in \mathbb Z_{\geq 0}\breve{\Sigma}^{\vee+}$.

We have $\mu - \lambda \in \mathbb Z_{\geq 0}\Phi^{\vee +}$. Therefore $\bar{\mu} - \bar{\lambda} \in \mathbb Z_{\geq 0}\overline{\Phi^{\vee +}}$. By \cite[Lem.\,15 and pf.~of Prop.\,13]{HR08}, we have
\begin{equation} \label{fund_equality}
Q^\vee(\Phi)_I = X_*(T_{\rm sc})_I = Q^\vee(\breve{\Sigma}).
\end{equation}
Thus we deduce that
$$
\bar{\mu} - \bar{\lambda} \in Q^\vee(\breve{\Sigma}) \cap \mathbb Z_{\geq 0}\overline{\Phi^{\vee +}}.
$$
We now claim that 
\begin{equation} \label{inclusion}
Q^\vee(\breve{\Sigma}) \cap \mathbb Z_{\geq 0}\overline{\Phi^{\vee +}} \,\, \subseteq \,\, \mathbb Z_{\geq 0}\breve{\Sigma}^{\vee +}.
\end{equation}
In order to see this inclusion, it is enough to check it after applying the map
\begin{align*}
X_*(T)_I &\rightarrow X_*(T)^I \otimes \mathbb Q \\
\bar{\nu} &\mapsto \nu^\diamond,
\end{align*}
where $\nu^\diamond$ is the average over the $I$-orbit of $\nu$. The reason it is sufficient to check (\ref{inclusion}) holds after applying $\bar{\nu} \mapsto \nu^\diamond$ is that the kernel of this map is the torsion subgroup $X_*(T)_{I, \rm tors}$ and both sides of (\ref{inclusion}) are contained in $Q^\vee(\breve{\Sigma})$, a torsion-free subgroup of $X_*(T)_I$.

Any element in $\breve{\Phi}^+$ is a positive scalar times an element in $\breve{\Sigma}^+$ (and vice versa), and thus the same holds for coroots. Hence, using Lemma \ref{average_lemma}, we see that 
\begin{equation} \label{inclusion2}
\mathbb Z_{\geq 0}(\overline{{\Phi}^{\vee +}})^\diamond \,\subset \, \mathbb Q_{\geq 0}\breve{\Phi}^{\vee +} \, \subset \,  \mathbb R_{\geq 0}\breve{\Sigma}^{\vee +}.
\end{equation} The inclusion (\ref{inclusion}) follows.

Now we have proved that $\bar{\lambda} \preceq \bar{\mu}$ in the dominance order on $X_*(T)_I$ determined by $\breve{\Sigma}^\vee$.  Let $\bar{\lambda}_{\rm dom}$ be the unique dominant element in the $\breve{W}$-orbit of $\bar{\lambda}$.  Then the same argument can be applied to show $\bar{\lambda}_{\rm dom} \preceq \bar{\mu}$. 

It follows that $t_{\bar{\lambda}_{\rm dom}} \leq t_{\bar{\mu}}$ in the Bruhat order. This is explained in \cite[Rem.\,4.17]{PRS} as a consequence of \cite[Cor.\,1.8]{Ric1}, but it can also be seen as a consequence of \cite[Lem.\,3.8 and proof of Prop.\,3.5]{Rap}. 

It is well-known that $\ell(t_{\bar{\lambda}}) = \ell(t_{\bar{\lambda}_{\rm dom}})$. 

\begin{lemma} \label{lemma2}
Let $\bar{\nu} \in X_*(T)_I$. If $x \in \widetilde{W}$ and $s \in S_{\rm aff}$ have $\ell(x) = \ell(sxs)$, and if $x \leq t_{\bar{\nu}}$, then there exists $\bar{\nu}' \in \breve{W}\bar{\nu}$ such that $sxs \leq t_{\bar{\nu}'}$.
\end{lemma}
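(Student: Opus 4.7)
\smallskip

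\noindent\textbf{Proof plan.} My approach is to combine the standard lifting property of the Bruhat order on the Coxeter system $(W_{\rm aff}(\breve\Sigma), S_{\rm aff})$, transferred to $\widetilde{W}$ via the semidirect decomposition $\widetilde{W} = W_{\rm aff}(\breve\Sigma) \rtimes \Omega_{\breve{\bf a}}$, with Rapoport's lemma \cite[Lem.\,3.8]{Rap} (already invoked above) governing Bruhat comparisons among translations $t_{\bar\nu_1}, t_{\bar\nu_2}$ via the dominance ordering on $X_*(T)_I$ determined by $\breve\Sigma^\vee$.

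The hypothesis $\ell(sxs) = \ell(x)$ forces $s$ to be an ascent on one side of $x$ and a descent on the other; by symmetry I treat only the case $sx > x$, $xs < x$, so that $sxs = s(xs)$. Since $xs < x \leq t_{\bar\nu}$ we already have $xs \leq t_{\bar\nu}$. Applying the left-lifting property to this relation, one of two alternatives occurs. If $st_{\bar\nu} < t_{\bar\nu}$, then directly $sxs = s(xs) \leq t_{\bar\nu}$ and we take $\bar\nu' := \bar\nu$; if instead $st_{\bar\nu} > t_{\bar\nu}$, we conclude $sxs \leq st_{\bar\nu}$. In the latter situation I apply the right-lifting property once more: provided $st_{\bar\nu}s > st_{\bar\nu}$, we obtain $sxs \leq st_{\bar\nu}s$. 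Writing $s = t_{\alpha^\vee}\, s_\alpha$ for the affine data of $s$ (with $\alpha^\vee = 0$ precisely when $s \in \breve W$), a direct computation gives $st_{\bar\nu}s = t_{s_\alpha\bar\nu + 2\alpha^\vee}$, which lies in $t_{\breve W\bar\nu}$ when $s \in \breve W$, so that $\bar\nu' := s_\alpha\bar\nu$ works.

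The main obstacle is the residual case $s \in S_{\rm aff} \setminus \breve W$ with $st_{\bar\nu} > t_{\bar\nu}$, where the naïve candidate $s_\alpha\bar\nu + 2\alpha^\vee$ falls outside $\breve W\bar\nu$ and hence $t_{s_\alpha\bar\nu + 2\alpha^\vee}$ is not a permissible bound. My plan for this case is to replace $t_{\bar\nu}$ by a $\breve W$-translate $t_{\bar\nu''}$, $\bar\nu'' \in \breve W\bar\nu$, satisfying the descent condition $st_{\bar\nu''} < t_{\bar\nu''}$ while still preserving $x \leq t_{\bar\nu''}$, thereby reducing to the easy subcase above (taking $\bar\nu' = \bar\nu''$). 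Existence of such a representative should follow from Rapoport's lemma combined with the fact that $x \leq t_{\bar\nu}$ together with $\ell(sxs) = \ell(x)$ forces the dominant conjugate of $x$'s translation part to satisfy a dominance inequality compatible with every $\breve W$-translate of $\bar\nu$, so that among the elements of $\breve W\bar\nu$ we may select one whose Weyl chamber is $s$-adjacent in the appropriate direction. This chamber-selection step is where I expect the main technical care to be required.
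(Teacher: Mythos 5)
Your overall strategy---two applications of the lifting property for the Bruhat order on $(W_{\rm aff}(\breve{\Sigma}), S_{\rm aff})$, extended to $\widetilde{W} = W_{\rm aff}(\breve{\Sigma})\rtimes\Omega_{\breve{\bf a}}$---is precisely the argument of \cite[Lem.\,4.5]{H01} that the paper invokes, and the first half of your case analysis is correct. The genuine problem is your conjugation formula. For any $w = t_c\bar{w} \in \widetilde{W}$ one has $w\,t_{\bar{\nu}}\,w^{-1} = t_{\bar{w}(\bar{\nu})}$, so for $s = t_{\alpha^\vee}s_\alpha$ (note $s^{-1}=s$) one computes
\[
 s\,t_{\bar{\nu}}\,s \;=\; t_{\alpha^\vee}\,t_{s_\alpha(\bar{\nu})}\,t_{s_\alpha(\alpha^\vee)}\,s_\alpha s_\alpha \;=\; t_{\alpha^\vee + s_\alpha(\bar{\nu}) - \alpha^\vee} \;=\; t_{s_\alpha(\bar{\nu})},
\]
since $s_\alpha(\alpha^\vee) = -\alpha^\vee$. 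Your expression $t_{s_\alpha\bar{\nu} + 2\alpha^\vee}$ rests on the sign error $s_\alpha(\alpha^\vee) = +\alpha^\vee$. Consequently $st_{\bar{\nu}}s$ is \emph{always} the translation by $s_\alpha(\bar{\nu}) \in \breve{W}\bar{\nu}$, even when $s \notin \breve{W}$: the ``residual case'' you isolate does not exist, and the chamber-selection argument you sketch for it---which is in any case only a plan, with the decisive step explicitly left open---is unnecessary and would not be a proof as written.

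With the formula corrected, your second alternative closes immediately. Suppose $st_{\bar{\nu}} > t_{\bar{\nu}}$, so that $sxs \le st_{\bar{\nu}}$. If $(st_{\bar{\nu}})s > st_{\bar{\nu}}$, then trivially $sxs \le st_{\bar{\nu}} < st_{\bar{\nu}}s = t_{s_\alpha(\bar{\nu})}$. If instead $(st_{\bar{\nu}})s < st_{\bar{\nu}}$, then since $(sxs)s = sx$ has length $\ell(sxs)+1$, the right lifting property applied to $sxs \le st_{\bar{\nu}}$ gives $sxs \le (st_{\bar{\nu}})s = t_{s_\alpha(\bar{\nu})}$ again. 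Either way $\bar{\nu}' = s_\alpha(\bar{\nu}) \in \breve{W}\bar{\nu}$ works, which is exactly how the cited proof runs. Two small further points: you should dispose of the degenerate case $sxs = x$ (where $s$ need not be an ascent on one side and a descent on the other, but $\bar{\nu}' = \bar{\nu}$ suffices), and you should note that all Bruhat comparisons take place within a single coset of $W_{\rm aff}(\breve{\Sigma})$ in $\widetilde{W}$, so the Coxeter-group lifting properties do apply.
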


\begin{proof}
The proof is the same as that of \cite[Lemma 4.5]{H01}.
\end{proof}

Since $\bar{\lambda}$ results from $\bar{\lambda}_{\rm dom}$ by a sequence of length-preserving conjugations by elements $s \in S_{\rm aff}$, repeated use of Lemma \ref{lemma2} shows that $t_{\bar{\lambda}} \leq t_{\bar{\mu}'}$ for some $\bar{\mu}' \in \breve{W}\bar{\mu}$.  This completes the proof of Theorem \ref{PRS_thm}.
\end{proof}

\section{Automorphisms and dual groups} \label{dual_grp_sec}

\subsection{On root systems of fixed points in dual groups}

We continue to assume $G$ is an arbitrary connected reductive group over $\breve{F}$ (which is automatically quasi-split). 

Let $\widehat{G}$ be the complex dual group of $G$. Since $G/\breve{F}$ is quasi-split, associated to $G$ there is an $I$-fixed splitting $(T, B, X = \sum_{\alpha \in \Delta} X_\alpha)$.  By construction of the $L$-group (cf.\,\cite[$\S 1$]{Ko84}), $\widehat{G}$ carries an action by $I$ which factors through a finite quotient and fixes a splitting of the form $(\widehat{T}, \widehat{B}, \widehat{X})$.  Since $G/\breve{F}$ is quasi-split, the $I$-action on $\widehat{T}$ inherited from that on $\widehat{G}$ agrees with that derived from the $I$-action on $X_*(T) = X^*(\widehat{T})$ (comp.\,\cite[$\S5$]{H15}). We can write $\widehat{B} = \widehat{T} \prod_{\alpha > 0} U_{\alpha^\vee}$ and $\widehat{X} = \sum_{\alpha \in \Delta} X_{\alpha^\vee}$. 

For any possibly non-reduced root system $R$, recall the reduced root systems $R_{\rm red}$ and $R^{\rm red}$ defined in $\S\ref{notation_sec}$. 

\begin{prop} \label{IMRN_prop}
The group $H := \widehat{G}^I$ is reductive and its neutral component $H^\circ = \widehat{G}^{I, \circ}$ is equipped with the splitting $(T_H, B_H, X_H) := (\widehat{T}^{I, \circ}, \widehat{B}^{I, \circ}, \widehat{X})$.  The maximal torus $T_H$ is the neutral component of the diagonalizable subgroup $\widehat{T}^I \subset \widehat{G}^I$. Moreover, the root system for $(H, T_H, B_H)$ is isomorphic to $[(\Phi^\vee)^\diamond]_{\rm red}$ and has $\{ (\alpha^\vee)^\diamond ~ | ~ \alpha \in \Delta \}$ as its simple positive roots. 
\end{prop}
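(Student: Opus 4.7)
The plan is to deduce the proposition from Steinberg's theorem on fixed points of a finite group of pinned automorphisms of a connected reductive group, followed by a direct inspection of the $T_H$-weight decomposition of $\widehat{\mathfrak g}$. Because the $I$-action on $\widehat G$ preserves the splitting $(\widehat T, \widehat B, \widehat X)$ by construction of the $L$-group, Steinberg's theorem implies that $\widehat G^{I,\circ}$ is connected reductive with Borel $\widehat B^{I,\circ}$ and maximal torus $\widehat T^{I,\circ}$; and since the chosen root vectors $X_{\alpha^\vee}$ composing $\widehat X$ are already $I$-fixed, $\widehat X$ itself is a pinning of $H^\circ$. This establishes the structural assertions in one stroke, including the identification of $T_H$ as the neutral component of the diagonalizable group $\widehat T^I$.

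For the root system, I would identify $X^*(T_H)$ with the torsion-free quotient of $X^*(\widehat T^I) = X_*(T)_I$, which embeds into $X_*(T)^I \otimes \mathbb Q$ via the averaging isomorphism $V_I \xrightarrow{\sim} V^I$. For any $\gamma \in \Phi^\vee$, the restriction $\gamma|_{T_H}$ depends only on the $I$-orbit $I\gamma$ (because $T_H$ is pointwise $I$-fixed), and averaging yields $\gamma|_{T_H} = \gamma^\diamond|_{T_H}$. Hence the $T_H$-weight space of $\widehat{\mathfrak g}^I$ at weight $\gamma^\diamond$ coincides with $\bigl(\bigoplus_{\gamma' \in I\gamma} \widehat{\mathfrak g}_{\gamma'}\bigr)^I$, of dimension $0$ or $1$ depending on whether the isotropy $I_\gamma$ acts trivially on $\widehat{\mathfrak g}_\gamma$ or not.

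The crux of the proof — and the only real obstacle — is determining which orbits $I\gamma$ actually contribute a non-zero $I$-fixed vector. When the elements of $I\gamma$ are pairwise $(\,\cdot\,|\,\cdot)$-orthogonal, writing $E_\gamma$ as an iterated bracket of pinning vectors shows $I_\gamma$ acts trivially on $\widehat{\mathfrak g}_\gamma$, so $\gamma^\diamond$ contributes a root of $H^\circ$. The non-orthogonal case occurs only in the $A_{2n}$ situation, where some $\gamma_1,\gamma_2 \in I\gamma$ have $\gamma_1+\gamma_2 \in \Phi^\vee$ with singleton $I$-orbit; a direct Chevalley-bracket computation — the pinned automorphism swaps $E_{\gamma_1}$ and $E_{\gamma_2}$, and antisymmetry of the Lie bracket forces $E_{\gamma_1+\gamma_2} \mapsto -E_{\gamma_1+\gamma_2}$ — shows that $I$ acts by $-1$ on $\widehat{\mathfrak g}_{\gamma_1+\gamma_2}$, so that singleton orbit contributes no root. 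Hence the weight $(\gamma_1+\gamma_2)^\diamond = 2\gamma_1^\diamond$ is absent from the root system of $H^\circ$, which therefore equals $[(\Phi^\vee)^\diamond]_{\rm red}$. Finally, since $\widehat B^{I,\circ}$ is the Borel of $H^\circ$, the simple positive roots are obtained from $\Delta^\vee$ by averaging, namely $\{(\alpha^\vee)^\diamond \mid \alpha \in \Delta\}$.
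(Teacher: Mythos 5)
The paper does not prove this proposition in place: its ``proof'' is a one-line citation of \cite[Prop.\,4.1]{H15}, where both the structural statements and the identification of the roots of $(\widehat{G}^{I,\circ},\widehat{T}^{I,\circ})$ with $[(\Phi^\vee)^\diamond]_{\rm red}$ are established. Your argument reconstructs that content from scratch, by what is essentially the standard route: Steinberg's fixed-point theorem (in the version for a finite, not necessarily cyclic, group of pinning-preserving automorphisms, so that the $S_3$-action on $D_4$ is covered) yields the splitting of $H^\circ$, and the root system is then read off from the $T_H$-weights of $\widehat{\mathfrak g}^I$, the only delicate point being the sign by which a pinned automorphism acts on the root space of an $I$-fixed root of the form $\gamma_1+\gamma_2$ in type $A_{2n}$. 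Your Chevalley-bracket computation there is exactly the right mechanism; it matches the model $SO_{2n+1}=(SL_{2n+1})^\tau$, where the restriction $2e_n$ of the $\tau$-fixed root $e_n-e_{n+2}$ fails to be a root of the fixed group. So the proof is correct, and what it buys over the paper's is self-containedness.

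Two small wrinkles in the write-up. First, your dichotomy ``orbit pairwise orthogonal $\Rightarrow I_\gamma$ acts trivially'' is, read literally, contradicted by the very orbits you then treat: a singleton orbit $\{\gamma_1+\gamma_2\}$ is vacuously pairwise orthogonal, yet $I$ acts on its root space by $-1$. The accurate statement is that the iterated-bracket argument gives trivial action except when $\gamma$ is a sum of two non-orthogonal roots interchanged by an element of $I_\gamma$ (which forces type $A_{2n}$), and that exceptional case is the one you compute. Second, you should say explicitly that the non-orthogonal orbit $\{\gamma_1,\gamma_2\}$ itself still contributes the fixed vector $E_{\gamma_1}+\tau E_{\gamma_1}$, so that $\gamma_1^\diamond$ survives as a (short) root; otherwise the short roots of $[(\Phi^\vee)^\diamond]_{\rm red}$ in the $A_{2n}$ components are unaccounted for. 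With these clarifications the argument is complete.
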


\begin{proof} Most of this is covered by the statement of \cite[Prop.\,4.1]{H15}; the last assertion was established during the proof of that proposition.
\end{proof}

\begin{prop} \label{on_Sigma}
There is an equality of based root systems
$$[(\Phi^\vee)^\diamond]_{\rm red} = \breve{\Sigma}^\vee.$$
\end{prop}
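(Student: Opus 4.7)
The plan is to combine Proposition~\ref{IMRN_prop} with the duality of Proposition~\ref{dual_prop}.  First, by Proposition~\ref{IMRN_prop} the based root system of $(H^\circ, T_H, B_H)$ is $[(\Phi^\vee)^\diamond]_{\rm red}$, with simple positive roots $\{(\alpha^\vee)^\diamond \mid \alpha \in \Delta\}$ (one representative per $I$-orbit on $\Delta$).  Under the averaging isomorphism $V_I \overset{\sim}{\rightarrow} V^I$, this reduced root system corresponds to ${\rm res}_I(\Phi^\vee) = W^I \cdot {\rm res}_I(\Delta^\vee) \subset V_I$, via the identification of ${\rm res}_I(\Phi^\vee)$ with the short-vector subsystem of the (possibly non-reduced) set $(\Phi^\vee)^\diamond$ recorded in the Remark following Definition~\ref{4_defn}.

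Next, I would invoke Proposition~\ref{dual_prop} to obtain $({\rm res}_I(\Phi^\vee))^\vee = N'_I(\Phi)$.  The proposition thus reduces to the identification $N'_I(\Phi) = \breve{\Sigma}$, i.e., to showing that $N'_I(\Delta)$ is precisely the set of simple positive roots of $\breve{\Sigma}$.  I would read this off the Bruhat--Tits tables in \cite[\S1.4]{BT1}: away from irreducible factors of type $A_{2n}$ with non-trivial $I$-action, every $I$-orbit on $\Delta$ is pairwise $(\cdot\,|\,\cdot)$-orthogonal, so $N'_I(\alpha) = N_I(\alpha) = \sum_{\beta \in I\alpha}\beta$ is exactly the simple positive root of $\breve{\Sigma}$ attached to that orbit; in the remaining case the central non-orthogonal orbit triggers the doubled normalization $N'_I(\alpha) = 2 N_I(\alpha)$ by the second case of Lemma~\ref{average_lemma}, matching the \'echelonnage convention that produces a type $C_n$ system as computed in the Example after Definition~\ref{4_defn}.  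More uniformly, one could instead cite \cite[Lem.\,15 and Prop.\,13]{HR08}, which pin down $\breve{\Sigma}$ through $Q^\vee(\breve{\Sigma}) = Q^\vee(\Phi)_I$ together with reducedness, and combine this with Lemma~\ref{average_lemma} applied on the dual side to match $\breve W$-orbits of simple (co)roots.

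The main obstacle is the $A_{2n}$ case: the factor of $2$ in $N'_I$ must be reconciled with the Bruhat--Tits rescaling of the unique non-orthogonal simple affine wall.  Once that single case is checked, the remaining quasi-split types ($^2A_{2n-1}$, $^2D_n$, $^3D_4$, $^2E_6$) present no difficulty, because there every $I$-orbit on $\Delta$ is pairwise orthogonal and $N'_I = N_I$, so the simple coroot of $[(\Phi^\vee)^\diamond]_{\rm red}$ attached to an orbit reduces directly to the corresponding simple root of $\breve{\Sigma}$.
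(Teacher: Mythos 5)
Your proposal is correct in outline but runs the paper's logic in reverse, so the comparison is worth spelling out. The paper proves the proposition directly on the coroot side, with no case analysis: (\ref{fund_equality}) gives $(\mathbb Z\Phi^\vee)_I=\mathbb Z\breve{\Sigma}^\vee$, averaging turns this into an equality of lattices $\mathbb Z(\Phi^\vee)^\diamond=\mathbb Z\breve{\Sigma}^\vee$ in $X_*(T)^I\otimes\mathbb Q$, and the decisive step (\ref{pos_cones}) identifies both positive monoids $\mathbb Z_{\geq 0}(\Phi^\vee)^{\diamond,+}$ and $\mathbb Z_{\geq 0}\breve{\Sigma}^{\vee+}$ with the intersection of this common lattice with the cone $\mathbb R_{\geq 0}\breve{\Phi}^{\vee+}$; the simple coroots are then the minimal positive elements of one and the same monoid, so the based root systems coincide. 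In the paper, $N'_I(\Phi)\cong\breve{\Sigma}$ (Theorem \ref{Sigma_thm}) is a \emph{consequence} of this, obtained by exactly the duality step you invoke. You take that duality step first and reduce to matching $N'_I(\Delta)$ with the simple roots of $\breve{\Sigma}$, to be checked against the Bruhat--Tits/Prasad--Raghunathan tables. This is not circular and can be made to work, but it costs a case analysis, and --- since the proposition asserts an on-the-nose equality of subsets of $X_*(T)^I\otimes\mathbb Q$, not merely an isomorphism of types --- you would still have to pin down the normalization of $\breve{\Sigma}^\vee$ in each case (note the paper is careful to write $N'_I(\Phi)\cong\breve{\Sigma}$ with $\cong$, reserving $=$ for the coroot side); doing that honestly brings you back to the lattice $X_*(T_{\rm sc})_I=\mathbb Z\breve{\Sigma}^\vee$ anyway.

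The one genuine soft spot is your ``more uniform'' fallback: the equality $Q^\vee(\breve{\Sigma})=Q^\vee(\Phi)_I$ ``together with reducedness'' does not by itself pin down $\breve{\Sigma}^\vee$ --- knowing a coroot lattice and that the root system is reduced does not tell you which elements of the lattice are the simple coroots. The missing ingredient is exactly the saturation statement (\ref{pos_cones}): both positive monoids equal the common lattice intersected with $\mathbb R_{\geq 0}\breve{\Phi}^{\vee+}$, and only then do the simple coroots fall out as the indecomposable elements. With that step added, your uniform variant \emph{is} the paper's proof; without it, you are committed to the table-checking route, including the $A_{2n}$ reconciliation that you flag as the main obstacle but do not carry out.
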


\begin{proof}
By (\ref{fund_equality}), we have $(\mathbb Z\Phi^\vee)_I = \mathbb Z\breve{\Sigma}^\vee$.  Applying $\nu \mapsto \nu^\diamond$ we get the equality
$$
\mathbb Z(\Phi^\vee)^\diamond = \mathbb Z\breve{\Sigma}^\vee
$$
of lattices in $X_*(T)^I \otimes \mathbb Q$. The two lattices have identical monoids of positive cones as
\begin{equation} \label{pos_cones}
\mathbb Z_{\geq 0}(\Phi^\vee)^{\diamond, +} \, = \, \mathbb Z(\Phi^\vee)^\diamond \cap \mathbb R_{\geq 0} \breve{\Phi}^{\vee +} \, = \,\mathbb Z\breve{\Sigma}^\vee \cap \mathbb R_{\geq 0}\breve{\Phi}^{\vee +} \, = \, \mathbb Z_{\geq 0}\breve{\Sigma}^{\vee +}.
\end{equation}
Therefore the minimal positive elements of these monoids -- i.e., the two bases of the two root systems $[(\Phi^\vee)^\diamond]_{\rm red}$ and $\breve{\Sigma}^\vee$  -- must coincide, and thus the based root systems coincide.
\end{proof}

\begin{cor} \label{G^I_dualroots_cor}
The root system for $(\widehat{G}^{I, \circ}, \widehat{T}^{I, \circ})$ is the root system $X_*(T)_I/{\rm tors} \supset \breve{\Sigma}^\vee$ and in particular is of type dual to $\breve{\Sigma}$.
\end{cor}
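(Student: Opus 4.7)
The corollary is essentially a repackaging of the two preceding propositions, once one identifies the ambient lattice in which the root system lives. The plan is as follows.

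First, I would recall that for a diagonalizable complex group $D$ equipped with an action of a finite group $I$, one has $X^*(D^I) = X^*(D)_I$, and consequently $X^*(D^{I,\circ})$ is canonically identified with $X^*(D)_I/{\rm tors}$, since passing to the neutral component kills exactly the torsion in the character group. Applying this with $D = \widehat{T}$ and using the standard identification $X^*(\widehat{T}) = X_*(T)$, I get
\[
X^*(\widehat{T}^{I,\circ}) \,=\, X_*(T)_I/{\rm tors}.
\]

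Second, Proposition \ref{IMRN_prop} tells us that the root system for $(\widehat{G}^{I,\circ}, \widehat{T}^{I,\circ})$ inside $X^*(\widehat{T}^{I,\circ})$ is $[(\Phi^\vee)^\diamond]_{\rm red}$, viewed naturally in $X_*(T)^I \otimes \mathbb{Q}$ via the $I$-averaging identification. Proposition \ref{on_Sigma} then gives the equality of based root systems $[(\Phi^\vee)^\diamond]_{\rm red} = \breve{\Sigma}^\vee$. Combining these two inputs, the root system of $(\widehat{G}^{I,\circ}, \widehat{T}^{I,\circ})$ inside $X_*(T)_I/{\rm tors}$ is precisely $\breve{\Sigma}^\vee$.

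Finally, for the assertion that this root system is of type dual to $\breve{\Sigma}$: $\breve{\Sigma}^\vee$ is by definition the dual of $\breve{\Sigma}$, so the claim is tautological once the root datum is identified. The only mild care needed is to check that the duality pairing between $X_*(T)_I/{\rm tors}$ and the cocharacter lattice $X_*(\widehat{T}^{I,\circ})$ matches the one used to define $\breve{\Sigma}^\vee$ from $\breve{\Sigma}$; this is automatic from the construction of the dual group and the averaging identification used in Proposition \ref{on_Sigma}. I do not anticipate a real obstacle — the only subtle point is bookkeeping with torsion in $X_*(T)_I$, which is already handled by restricting from $\widehat{T}^I$ to its neutral component.
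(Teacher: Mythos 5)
Your proposal is correct and matches the paper's (implicit) argument: the corollary is stated without proof precisely because it is the concatenation of Proposition \ref{IMRN_prop} and Proposition \ref{on_Sigma}, together with the standard identifications $X^*(\widehat{T}^I)=X_*(T)_I$ and $X^*(\widehat{T}^{I,\circ})=X_*(T)_I/{\rm tors}$ that the paper uses throughout (e.g.\ in Lemma \ref{ht_wt_lem}). Your handling of the torsion and of the averaging map $\bar{\nu}\mapsto\nu^\diamond$ is exactly the intended bookkeeping.
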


\begin{Remark} \label{point_elided_rem}
Corollary \ref{G^I_dualroots_cor} is used implicitly in the works of Zhu \cite{Zhu1} and Richarz \cite{Ric2} on the geometric Satake isomorphism for certain non-split groups. For example, see \cite[Lem.\,4.10]{Zhu1} and \cite[Def.\,A.7]{Ric2}.
\end{Remark}

\subsection{On highest weight representations} \label{highest_wt_sec}

Under the identification $X_*(T) = X^*(\widehat{T})$, we may view ${\mathcal Wt}(\mu)$ as weights of $\widehat{T}$. Let $V_\mu$ denote the irreducible representation of $\widehat{G}$ with highest weight $\mu$.    Then ${\mathcal Wt}(\mu)$ is precisely the set of $\widehat{T}$-weights of $V_\mu$.  It follows that $\overline{\mathcal Wt(\mu)}$ is precisely the set of $\widehat{T}^I$-weights of the image $V_\mu|_{\widehat{G}^I}$ of $V_\mu$ under the restriction map ${\rm Rep}(\widehat{G}) \rightarrow {\rm Rep}(\widehat{G}^I)$; since $\widehat{T}^I$ is a diagonalizable group, weights for it exist.

By (\ref{pos_cones}), the natural partial orders $\preceq$ on $\overline{\mathcal Wt(\mu)}$ and on the weights of $V_{\mu}|_{\widehat{G}^I}$ coincide.

Let $\bar{\nu}^\flat$ denote the image of $\bar{\nu}$ in $X_*(T)_I/{\rm tors}$. Let $X_*(T)_I^+$ and $X_*(T)^{\flat, +}_I$ be the set of $B$-dominant elements in $X_*(T)_I$ and $X_*(T)^\flat_I$. Let $H := \widehat{G}^{I}$ and $H^\circ := \widehat{G}^{I, \circ}$.

The following lemma provides the theorem of the highest weight for the possibly disconnected reductive group $H$. Some of this was done, in a different way, by Zhu \cite[Lem.\,4.10]{Zhu1}.

\begin{lemma} \label{ht_wt_lem}
\begin{enumerate}
\item[(1)] Every irreducible representation of $H$ is a highest weight representation $V_{\bar{\lambda}}$ for some $\bar{\lambda} \in X_*(T)^+_I$.
\item[(2)] Let $V_{\bar{\lambda}}$ be an irreducible representation of $H$ with highest weight $\bar{\lambda}$. Then $V_{\bar{\lambda}}|_{H^\circ}$ is irreducible and has highest weight $\bar{\lambda}^\flat$. 
\item[(3)] Every irreducible representation of $H^\circ$ is of the form $V_{\bar{\lambda}}|_{H^\circ}$ for some $\bar{\lambda} \in X_*(T)^+_I$.
\item[(4)] For $\bar{\nu} \in X_*(T)^+_I = X^*(\widehat{T}^I)^+$ with image $\bar{\nu}^\flat \in X_*(T)_I/{\rm tors} = X^*(\widehat{T}^{I, \circ})$, and for $V_{\bar{\lambda}^\flat} = V_{\bar{\lambda}}|_{\widehat{G}^{I,\circ}}$, we have
\begin{equation} \label{wtspace=wtspace^flat}
V_{\bar{\lambda}}(\bar{\nu})|_{\widehat{T}^{I, \circ}}  = V_{\bar{\lambda}^\flat}(\bar{\nu}^\flat).
\end{equation}
\end{enumerate}
\end{lemma}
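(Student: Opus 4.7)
The plan is to treat $H = \widehat{G}^I$ as a (possibly disconnected) reductive group with ``generalized Cartan'' $\widehat{T}^I$. Although $\widehat{T}^I$ may itself be disconnected, it normalizes both the Borel $B_H = \widehat{B}^{I,\circ}$ and its unipotent radical $\widehat{U}^{I,\circ}$, since $\widehat{T}^I \subset \widehat{B}^I$. The preliminary structural claim that I expect to be the main obstacle is that $H = H^\circ \cdot \widehat{T}^I$ with intersection $\widehat{T}^{I,\circ}$. I would derive it from the generation of $\widehat{G}$ by $\widehat{T}$ together with the (pinning-preserving) $I$-orbits of root subgroups: the averaged root subgroups land in $H^\circ$ by Steinberg's theorem, leaving $\widehat{T}^I$ as the only possible contribution to the component group.

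For (1) and (2), let $V$ be an irreducible $H$-module. The subspace $V^{\widehat{U}^{I,\circ}}$ is a nonzero $\widehat{T}^I$-submodule, and decomposes into $\widehat{T}^I$-eigenlines. Using $W^I \subset H^\circ$ I would pick an eigenvector $v$ whose weight $\bar\lambda \in X^*(\widehat{T}^I) = X_*(T)_I$ projects to a dominant $\bar\lambda^\flat \in X^*(T_H)^+$. Because $v$ is a $\widehat{T}^I$-eigenvector, the $H^\circ$-submodule generated by $v$ is stable under $\widehat{T}^I$, hence under all of $H = H^\circ \cdot \widehat{T}^I$, and by irreducibility equals $V$. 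This proves (2) and yields (1) by setting $V_{\bar\lambda} := V$. For (3) I would reverse this construction: any irreducible $V_{\bar\lambda^\flat} \in \Rep(H^\circ)$ extends to $H$ by choosing a lift $\bar\lambda \in X^*(\widehat{T}^I)$ of $\bar\lambda^\flat$ and prescribing that $\widehat{T}^I$ acts on the highest-weight line through the character $\bar\lambda$; compatibility with the $H^\circ$-action follows because the $\widehat{T}^I$-conjugation action on $H^\circ$ preserves $(T_H, B_H, X_H)$ by the pinning-preservation, so is inner.

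Part (4) is then a torsion-freeness observation. Since $\mathrm{Lie}(H) = \mathrm{Lie}(H^\circ)$ has one-dimensional $\widehat{T}^{I,\circ}$-root spaces, each coroot $\alpha^\vee \in \breve\Sigma^\vee \subset X^*(\widehat{T}^{I,\circ}) = X_*(T)_I/\mathrm{tors}$ admits a \emph{unique} lift $\widetilde{\alpha^\vee} \in X^*(\widehat{T}^I) = X_*(T)_I$, namely the $\widehat{T}^I$-character on the corresponding root space. Hence the $\widehat{T}^I$-weights of $V_{\bar\lambda}$ all lie in the coset $\bar\lambda + \mathbb Z\widetilde{\breve\Sigma^\vee}$. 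The sublattice $\mathbb Z\widetilde{\breve\Sigma^\vee}$ projects isomorphically onto $\mathbb Z\breve\Sigma^\vee$, which is torsion-free in $X_*(T)_I/\mathrm{tors}$, so $\mathbb Z\widetilde{\breve\Sigma^\vee} \cap X_*(T)_{I,\mathrm{tors}} = 0$. Consequently $\bar\nu \mapsto \bar\nu^\flat$ is injective on the set of $\widehat{T}^I$-weights of $V_{\bar\lambda}$, and each $\widehat{T}^{I,\circ}$-weight space $V_{\bar\lambda^\flat}(\bar\nu^\flat)$ coincides with the unique $\widehat{T}^I$-weight space $V_{\bar\lambda}(\bar\nu)$ lying above it, which is exactly equation~(\ref{wtspace=wtspace^flat}).
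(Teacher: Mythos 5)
Your argument is correct in substance, but it follows a genuinely different route from the paper. The paper imports from \cite[$\S4$--$5$]{H15} the finer decomposition $H = Z(H)\cdot H^\circ$ together with $\widehat{T}^I = Z(H)\cdot\widehat{T}^{I,\circ}$ and $Z(H^\circ)\subset \widehat{T}^{I,\circ}$; with these, extending an irreducible $H^\circ$-module to $H$ is purely a matter of central characters ($\rho(zh^\circ)=\bar{\lambda}(z)\rho^\circ(h^\circ)$), irreducibility of the restriction in (2) is immediate from Schur's lemma, and uniqueness in (1) is read off from the pair (restriction to $H^\circ$, central character). You instead work with the coarser decomposition $H = H^\circ\cdot\widehat{T}^I$, which forces you to (a) prove irreducibility of $V|_{H^\circ}$ by exhibiting it as a cyclic highest-weight module (you should say explicitly that a finite-dimensional cyclic highest-weight module of a connected reductive group in characteristic $0$ is irreducible --- standard, via complete reducibility and one-dimensionality of the top weight space), and (b) extend representations via intertwiners normalized on the highest-weight line rather than via a central character. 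Both routes work; the paper's is shorter because the heavy lifting is in the cited structure theory, while yours trades that for a Schur-lemma/cocycle argument. One justification you give is inaccurate: conjugation by $t\in\widehat{T}^I$ preserves $(T_H,B_H)$ but does \emph{not} preserve the pinning vector $X_H$ --- it scales the simple root space attached to the orbit of $\alpha$ by $\overline{\alpha^\vee}(t)$. What you actually need, and what is true, is that this automorphism acts trivially on the based root datum of $H^\circ$ (it fixes $T_H$ pointwise and stabilizes $B_H$), hence is inner, so the twisted representation is isomorphic to the original and your normalized intertwiners exist and compose correctly. Finally, your argument for (4) --- that the $\widehat{T}^I$-weights of $V_{\bar{\lambda}}$ lie in $\bar{\lambda}+Q^\vee(\Phi)_I = \bar{\lambda}+Q^\vee(\breve{\Sigma})$, a coset of a torsion-free subgroup of $X_*(T)_I$ by (\ref{fund_equality}), so that $\bar{\nu}\mapsto\bar{\nu}^\flat$ is injective on weights --- is a correct and welcome elaboration of what the paper dismisses as straightforward.
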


\begin{proof} We deduce this from the theorem of the highest weight for the connected reductive group $H^\circ$, as follows. By \cite[$\S4,5$]{H15}, we have $\widehat{G}^I = Z(\widehat{G})^I \cdot \widehat{G}^{I,\circ}$ and $\widehat{G}^{I,\circ} \cap \widehat{T}^I = \widehat{T}^{I, \circ}$. It follows that $Z(\widehat{G}^I) = Z(\widehat{G})^I \cdot Z(\widehat{G}^{I,\circ})$ and
\begin{align} \label{H=ZH0}
H &= Z(H)\cdot H^\circ  \notag\\
\widehat{T}^I &= Z(H) \cdot \widehat{T}^{I,\circ} \\
Z(H^\circ) &\subset \widehat{T}^{I,\circ}. \notag
\end{align}
Suppose $(V, \rho^\circ)$ is an irreducible representation of $H^\circ$. This is a highest weight representation for some element $\bar{\lambda}^\flat \in X_*(T)_I^{\flat, +} = X^*(\widehat{T}^{I, \circ})^+$, the image of an element $\bar{\lambda} \in X_*(T)_I^+ = X^*(\widehat{T}^I)^+$. The restriction of $\rho^\circ$ to $Z(H) \cap H^\circ = Z(H^\circ)$ acts through the central character, which is the restriction of $\bar{\lambda}^\flat$ to $Z(H^\circ)$. Using (\ref{H=ZH0}), we can extend $\rho^\circ$ to a representation $(V, \rho)$ of $H$ by setting
$$\rho(zh^\circ) = \bar{\lambda}(z) \,\rho^\circ(h^\circ),$$ 
where $z \in Z(H)$ and $h^\circ \in H^\circ$. This is irreducible and has highest weight $\bar{\lambda}$.  This constructs all the irreducible highest weight representations $(V_{\bar{\lambda}}, \rho_{\bar{\lambda}})$ of $H$, and proves (3). Clearly (\ref{H=ZH0}) implies (2).  If $(V, \rho)$ is any irreducible representation of $H$, then it has some maximal weight, say $\bar{\lambda} \in X^*(\widehat{T})^+$. By (\ref{H=ZH0}) the restriction $V|_{H^\circ}$ is irreducible, has maximal weight $\bar{\lambda}^\flat$, and hence is the highest weight representation for $\bar{\lambda}^\flat$. But then $V_{\bar{\lambda}}|_{H^\circ} \cong V|_{H^\circ}$, and as the central characters of $V$ and $V_{\bar{\lambda}}$ are both $\bar{\lambda}|_{Z(H)}$, (\ref{H=ZH0}) shows that $V \cong V_{\bar{\lambda}}$, and hence (1). Part (4) is straightforward.
\end{proof}

\section{Bruhat-Tits, Knop, and Macdonald root systems} \label{BT-Knop-Mac_sec}

%\subsection{Extended affine Weyl groups over $\breve{F}$ and $F$}

\subsection{The \'{e}chelonnage root system $\breve{\Sigma}$} \label{breveSigma_subsec}

Recall ($\S\ref{PRS_proof_sec}$) the reduced root system $\breve{\Sigma}$ which is attached to a connected reductive group $G$ over $\breve{F}$. We may rephrase  Proposition \ref{on_Sigma} as follows. 

\begin{theorem} \label{Sigma_thm}
The Bruhat-Tits root system $\breve{\Sigma}$ attached to a group $G/\breve{F}$ with absolute root system $\Phi$ can be characterized by the identifications
\begin{align*}
{\rm res}_I(\Phi^\vee) &= \breve{\Sigma}^\vee \\
 N'_I(\Phi) &\cong \breve{\Sigma}.
\end{align*}
\end{theorem}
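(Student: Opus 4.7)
The plan is to deduce Theorem \ref{Sigma_thm} by repackaging Proposition \ref{on_Sigma} (the equality $[(\Phi^\vee)^\diamond]_{\rm red} = \breve{\Sigma}^\vee$) through the averaging isomorphism $V_I \overset{\sim}{\to} V^I$, and then obtaining the second identification by applying the duality of Proposition \ref{dual_prop}. So essentially all the hard work has been done upstream; what remains is to verify that the four objects in play -- ${\rm res}_I(\Phi^\vee)$, the reduced root system $[(\Phi^\vee)^\diamond]_{\rm red}$, $\breve{\Sigma}^\vee$, and (by duality) $N'_I(\Phi)$ and $\breve{\Sigma}$ -- match up once transported into a common vector space.

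For the first identification, I would proceed as follows. By Definition \ref{4_defn}, ${\rm res}_I(\Phi^\vee) = W^I \cdot {\rm res}_I(\Delta^\vee) \subset V_I$. Since the form $(\cdot\,|\,\cdot)$ is $W \rtimes I$-invariant, the averaging isomorphism $V_I \overset{\sim}{\to} V^I$ is $W^I$-equivariant, and sends each $\overline{\alpha^\vee}$ to $(\alpha^\vee)^\diamond$. Hence the image of ${\rm res}_I(\Phi^\vee)$ in $V^I$ is $W^I \cdot \{(\alpha^\vee)^\diamond : \alpha \in \Delta\}$. I would then argue that this agrees with $[(\Phi^\vee)^\diamond]_{\rm red}$: the simple roots $(\alpha^\vee)^\diamond$ ($\alpha \in \Delta/I$) are exactly the minimal positive elements in the cone $\mathbb Z_{\geq 0}(\Phi^\vee)^{\diamond,+}$ appearing in equation (\ref{pos_cones}) of the proof of Proposition \ref{on_Sigma}. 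Since both ${\rm res}_I(\Phi^\vee)$ and $\breve{\Sigma}^\vee$ are reduced based root systems with Weyl group $W^I = \breve{W}$, and their bases coincide under the averaging isomorphism (by Proposition \ref{on_Sigma}), the two based root systems are identified.

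For the second identification, I would invoke Proposition \ref{dual_prop} (i.e., Theorem \ref{thm_A}), which gives ${\rm res}_I(\Phi^\vee)^\vee = N'_I(\Phi)$ as based root systems in $V^I$. Combining with the first identification, $N'_I(\Phi) = (\breve{\Sigma}^\vee)^\vee = \breve{\Sigma}$, where the final equality uses the identification of $V^I$ with the apartment $X_*(S) \otimes \mathbb R$ in which $\breve{\Sigma}$ lives (hence the weaker ``$\cong$'' rather than ``$=$'' in the statement).

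The main obstacle, such as it is, is in the middle paragraph: confirming that the reduced root system ${\rm res}_I(\Phi^\vee)$ defined abstractly via $W^I {\rm res}_I(\Delta^\vee)$ agrees, under the averaging isomorphism, with the reduction $[(\Phi^\vee)^\diamond]_{\rm red}$ of the set of $I$-averaged coroots. In principle a root $\alpha^\vee \in \Phi^\vee$ need not be a $W^I$-image of a simple coroot, so one cannot directly compare $W^I$-orbits with the full set $(\Phi^\vee)^\diamond$; but it suffices to compare simple roots (the bases) since both sides are reduced root systems with the same Weyl group $W^I$, and the bases coincide by the minimal-positive-element characterization extracted from (\ref{pos_cones}).
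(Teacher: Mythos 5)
Your proposal is correct and follows the same route as the paper, whose proof is simply the two-line observation that the first identity is Proposition \ref{on_Sigma} (transported through the averaging isomorphism) and the second then follows from Proposition \ref{dual_prop}. The extra detail you supply in the middle paragraph --- matching the basis $\{(\alpha^\vee)^\diamond : \alpha \in \Delta\}$ of $[(\Phi^\vee)^\diamond]_{\rm red}$ with the image of ${\rm res}_I(\Delta^\vee)$ and invoking that a reduced based root system is determined by its base and Weyl group --- is exactly the content the paper leaves implicit (cf.\ the Remark after Definition \ref{4_defn} and Proposition \ref{IMRN_prop}).
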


\begin{proof}
The equality in the first line follows from Proposition \ref{on_Sigma}, and this implies the isomorphism in the second line by Proposition \ref{dual_prop}.
\end{proof}

\begin{Remark}
The set $\breve{\Sigma}$ has previously been described in terms of $\breve{\Phi}$ instead of $\Phi$, in a non-uniform way.  For example, in the work of Prasad-Raghunathan \cite[$\S2.8$]{PrRa} (cf.\,\cite[4.15]{PRS}), one finds the following description of $\breve{\Sigma}$ for $G/\breve{F}$ simply connected and absolutely simple:
\begin{itemize}
\item If $G$ is split, then $\breve{\Sigma} = \breve{\Phi}$;
\item If $G$ is non-split and $\breve{\Phi}$ is reduced, then $\breve{\Sigma} \cong \breve{\Phi}^\vee = \{ \frac{2\alpha}{(\alpha \, | \, \alpha)}  ~ | ~ \alpha \in \breve{\Phi} \}$;
\item If $\breve{\Phi}$ is non-reduced then $\breve{\Sigma}$ is the subset of roots $\alpha \in \breve{\Phi}$ for which $2\alpha \notin \breve{\Phi}$.
\end{itemize}
One can use Theorem \ref{Sigma_thm} together with Lemma \ref{average_lemma} to give another proof of the above description of Prasad-Raghunathan.
\end{Remark}
%Proposition \ref{on_Sigma} shows that, in contrast, the set of coroots $\breve{\Sigma}^\vee$ has %a simple uniform description in terms of the Galois action on the absolute coroots $\Phi^\vee$. 

\subsection{The \'{e}chelonnage root system ${\Sigma}_0$} \label{Sigma0_subsec}

For the remainder of the article, we assume $G$ is a quasi-split group over the field $F$. Let $A$ be a maximal $F$-split torus in $G$ with centralizer the $F$-rational torus $T$. Fix an $F$-rational Borel subgroup $B = TU$ containing $T$. Let $S \subset T$ be the maximal $\breve{F}$-split subtorus. 

Let $\mathcal W_F$ be the Weil group of the nonarchimedean local field $F$, and write $I \subset \mathcal W_F$ for the inertia subgroup; recall $I$ is isomorphic to the absolute Galois group of $\breve{F}$. Let $\tau \in \mathcal W_F$ denote a geometric Frobenius element. 

We have the relative Weyl group $W^I = \breve{W}$ for $G/\breve{F}$, and the relative Weyl group $W_0 = (\breve{W})^{\langle \tau \rangle}$ for $G/F$.  See \cite[Prop.\,4.1]{H15}. Let $\Phi_0 \subset X^*(S)^\tau \otimes \mathbb R = X^*(A) \otimes \mathbb R$ be the set of relative roots for $G/F$, in other words, the set of $\mathcal W_F$-averages of the elements of $\Phi$, as well as the set of $\tau$-averages of elements of $\breve{\Phi}$.  

We fix once and for all a {\em very special} vertex $o$ in the closure of the alcove $\breve{\bf a}$ in the apartment $X_*(S)^\tau \otimes \mathbb R$ for $G/F$.  This means that $o$ is also a special vertex in the apartment $X_*(S) \otimes \mathbb R$ for $G/\breve{F}$; very special vertices exist for quasi-split $G/F$ by \cite[$\S6$]{Zhu1}.  If $G/F$ is unramified, a vertex $o$ is very special if and only if it is hyperspecial.  Recall $\widetilde{W}$ acts on the apartment $X_*(S) \otimes \mathbb R$, and we may view $\breve{W}$ (resp. $W_0 = \breve{W}^\tau$) as the subgroup of $\widetilde{W}$ (resp. $\widetilde{W}^\tau$) fixing $o$. According to Bruhat-Tits theory (\cite[$\S1.4$]{BT1}) to $\Phi_0$ we associate a reduced root system $\Sigma_0$ such that, as in $\S\ref{PRS_proof_sec}$, $W_{\rm aff}(\Sigma_0) := \mathbb Z\Sigma_0^\vee \rtimes W_0$ is identified with the Iwahori-Weyl group $\widetilde{W}(G_{\rm sc}/F)$ of $G_{\rm sc}/F$. Given $o$, we get decompositions of the Iwahori-Weyl groups 
\begin{align*}
\widetilde{W}(G_{\rm sc}/\breve{F}) &= \mathbb Z\breve{\Sigma}^\vee \rtimes \breve{W} \\
\widetilde{W}(G_{\rm sc}/F) &= \mathbb Z\Sigma^\vee_0 \rtimes W_0
\end{align*} 
where the first decomposition is $\tau$-equivariant; see $\S\ref{PRS_proof_sec}$.  We may assume the alcove $\breve{\bf a}$ from $\S\ref{PRS_proof_sec}$ is the $\tau$-invariant alcove of the apartment $X_*(S) \otimes \mathbb R$ corresponding to an alcove ${\bf a} \subset X_*(A) \otimes \mathbb R$ whose closure contains $o$. Then it is known thanks to T.\,Richarz that $$\widetilde{W}(G_{\rm sc}/\breve{F})^\tau = \widetilde{W}(G_{\rm sc}/F);$$ see \cite[Lem.\,11.3.1]{H14}. Thus by comparing $\tau$-fixed points, we deduce
$\mathbb Z \Sigma_0^\vee = \big(\mathbb Z\breve{\Sigma}^\vee \big)^\tau$, and hence (comparing bases as in Proposition \ref{on_Sigma})
\begin{equation} \label{echelonnage_key}
\Sigma_0^\vee = N_\tau(\breve{\Sigma}^\vee).
\end{equation}
Now using Proposition \ref{dual_prop} we obtain the following:

\begin{theorem} \label{Sigma_0_thm}
For a quasi-split group $G/F$ giving rise to $\breve{\Phi}$ and $\breve{\Sigma}$ as in $\S\ref{breveSigma_subsec}$, the \'{e}chelonnage root system $\Sigma_0$ attached to $G/F$ can be characterized by the identities
\begin{align*}
\Sigma_0^\vee &= N_\tau(\breve{\Sigma}^\vee) \\
\Sigma_0 &\cong {\rm res}'_\tau(\breve{\Sigma}).
\end{align*}
\end{theorem}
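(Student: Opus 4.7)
I would assemble the proof from ingredients already laid out in the preamble to the theorem. The first step is to record the equality of coroot lattices
$$
\mathbb Z\Sigma_0^\vee \;=\; (\mathbb Z\breve{\Sigma}^\vee)^\tau
$$
derived at (\ref{echelonnage_key}) from the Iwahori--Weyl group decompositions $\widetilde{W}(G_{\rm sc}/\breve{F}) = \mathbb Z\breve{\Sigma}^\vee \rtimes \breve{W}$ and $\widetilde{W}(G_{\rm sc}/F) = \mathbb Z\Sigma_0^\vee \rtimes W_0$, Richarz's identity $\widetilde{W}(G_{\rm sc}/\breve{F})^\tau = \widetilde{W}(G_{\rm sc}/F)$ (\cite[Lem.\,11.3.1]{H14}), and the relation $W_0 = \breve{W}^\tau$.

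Next I would refine this equality of lattices to the asserted equality $\Sigma_0^\vee = N_\tau(\breve{\Sigma}^\vee)$ of based root systems by mimicking the positive-cone argument from the proof of Proposition \ref{on_Sigma}. Concretely, I would verify the chain of equalities
$$
\mathbb Z_{\geq 0}\,N_\tau(\breve{\Sigma}^{\vee +}) \;=\; (\mathbb Z\breve{\Sigma}^\vee)^\tau \cap \mathbb R_{\geq 0}\,\Phi_0^{\vee +} \;=\; \mathbb Z\Sigma_0^\vee \cap \mathbb R_{\geq 0}\,\Phi_0^{\vee +} \;=\; \mathbb Z_{\geq 0}\,\Sigma_0^{\vee +},
$$
where the outer equalities rest on the fact that the roots of $\Sigma_0$ and $\Phi_0$ are positive scalar multiples of each other (so they span the same positive real cone), together with the observation that the $\tau$-orbit sums $N_\tau(\alpha^\vee)$ for $\alpha \in \breve{\Delta}$ generate the $\tau$-fixed positive cone of $\mathbb Z\breve{\Sigma}^\vee$. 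Identifying the minimal positive elements of this common monoid on both sides then forces $N_\tau(\breve{\Delta}^\vee)$ to coincide with the set of simple positive coroots of $\Sigma_0^\vee$, which gives the first identity.

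The second identity is a formal consequence of duality. Applying Proposition \ref{dual_prop} to the reduced based root system $(V^I, \breve{\Sigma}, \breve{\Delta})$ equipped with its induced $\langle\tau\rangle$-action yields ${\rm res}'_\tau(\breve{\Sigma})^\vee = N_\tau(\breve{\Sigma}^\vee)$; combining with the first identity gives $\Sigma_0^\vee = {\rm res}'_\tau(\breve{\Sigma})^\vee$, and dualizing produces $\Sigma_0 \cong {\rm res}'_\tau(\breve{\Sigma})$.

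The main obstacle is the bookkeeping in the second step: one must confirm that on the coroot side it is the plain norm $N_\tau$ (not the modified norm $N'_\tau$) that appears, reflecting the fact that in the unique delicate orbit type ($A_{2n}$ under the diagram involution) the lattice $(\mathbb Z\breve{\Sigma}^\vee)^\tau$ already contains the unscaled $\tau$-orbit sum of coroots. This asymmetry is exactly what Proposition \ref{dual_prop} predicts, as the compensating factor of $2$ reappears on the root side in the guise of the modified restriction ${\rm res}'_\tau$. No inputs beyond those cited above should be required.
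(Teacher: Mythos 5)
Your proposal is correct and follows the paper's own route exactly: the lattice identity $\mathbb Z\Sigma_0^\vee = (\mathbb Z\breve{\Sigma}^\vee)^\tau$ via Richarz's identity $\widetilde{W}(G_{\rm sc}/\breve{F})^\tau = \widetilde{W}(G_{\rm sc}/F)$ and the $\tau$-equivariant Iwahori--Weyl decompositions, the upgrade to $\Sigma_0^\vee = N_\tau(\breve{\Sigma}^\vee)$ by the same positive-cone/minimal-element comparison as in Proposition \ref{on_Sigma}, and then Proposition \ref{dual_prop} for $\Sigma_0 \cong {\rm res}'_\tau(\breve{\Sigma})$. The paper carries out precisely these steps in the discussion culminating in (\ref{echelonnage_key}), so there is nothing to add.
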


\begin{Remark}
Note that the descent $\Phi \leadsto \breve{\Sigma}$ is via the operation $N'_I$, whereas the descent $\breve{\Sigma} \leadsto \Sigma_0$ is via ${\rm res}'_\tau$.  This should be contrasted with the relation between $\Phi$ and the Bruhat-Tits affine roots:  $\Phi_{\rm aff}(G/\breve{F})$ is defined using the internal structure of $G/\breve{F}$ and is not transparent from $\Phi$, and the descent $\Phi_{\rm aff}(G/\breve{F}) \leadsto \Phi_{\rm aff}(G/F)$ is via a restriction operation analogous to ${\rm res}_\tau$ (see \cite[1.10.1]{Tits}).
\end{Remark}

\begin{Remark}
Suppose $G/F$ is {\em not} quasi-split. Then we can prove a weaker statement related to Theorem \ref{Sigma_0_thm}. Note that $\tau$ still permutes the simple affine roots $\breve{\Delta}_{\rm aff}$ and the affine roots $\breve{\Sigma}_{\rm aff} = \breve{\Sigma} \times \mathbb Z$ associated to a $\tau$-stable alcove $\breve{\bf a}$ in the apartment associated to an $F$-rational maximal $\breve{F}$-split torus $S \subset G$. Let $\breve{S}_{\rm aff}$ denote the simple affine reflections in $\breve{W}_{\rm aff} = \widetilde{W}(G_{\rm sc}/\breve{F})$ and let $(\breve{S}_{\rm aff}/\tau)_{< \infty}$ denote the set of $\langle \tau \rangle$-orbits $\pi$ in $\breve{S}_{\rm aff}$ such that the parabolic subgroup $\breve{W}_{\pi} \subset \breve{W}_{\rm aff}$ generated by all $s \in \pi$ is {\em finite}. For each $\pi \in (S_{\rm aff}/\tau)_{< \infty}$, define $w_{\pi} \in \breve{W}_{\rm aff}^\tau = W_{\rm aff}$ to be the longest element in $\breve{W}_\pi$.  Then $(W_{\rm aff}, \{w_\pi\}_{\pi \in (\breve{S}_{\rm aff}/\tau)_{< \infty}})$ is a Coxeter group. Further, if $\breve{\ell} : \breve{W}_{\rm aff} \rightarrow \mathbb Z_{\geq 0}$ is the length function on $(\breve{W}_{\rm aff}, \breve{S}_{\rm aff})$, and if $w = w_{\pi_1} \cdots w_{\pi_r}$ is a reduced expression for $w \in W_{\rm aff}$, then 
$$
\breve{\ell}(w) = \sum_{i=1}^r \breve{\ell}(w_{\pi_i}).
$$
(For proofs, see \cite{Mic}.) Moreover, the simple affine roots $\Delta_{0, \rm aff} \subset \Sigma_0 \rtimes \mathbb Z$ are the (minimally positive on $\breve{\bf a}^\tau$) affine roots corresponding to the reflections $w_\pi$, $\pi \in (\breve{S}_{\rm aff}/\tau)_{< \infty}$.  (Note that it is possible to have $(\breve{S}_{\rm aff}/\tau)_{< \infty}  = \emptyset$.)
\end{Remark}

\subsection{The Knop root system $\widetilde{\Sigma}_0$} \label{Knop_subsec}

Now we let $L: S_{\rm aff} \rightarrow \mathbb Z$ be the system of parameters associated to the group $G(F)$. More precisely, let $P_{\bf a}(\mathcal O_F) \subset G(F)$ (resp.\,$P_o(\mathcal O_F) \subset G(F)$) be the Iwahori (resp.\,very special parahoric) subgroup corresponding to ${\bf a}$ (resp.\,$o$). We fix a set-theoretic embedding $\widetilde{W}^\tau = X_*(T)_I^\tau \rtimes W_0 \hookrightarrow G(F)$ by sending $w \in W_0$ to any lift in $N_G(A)(F) \cap P_{o}(\mathcal O_F)$ and by mapping $\lambda \in X_*(T)^\tau_I$ to an element $a_\lambda \in T(F)$ such that $\kappa_T(a_\lambda) = -\lambda$. (Here $\kappa_T$ is the Kottwitz homomorphism $\kappa_T : T(\breve{F}) \rightarrow X_*(T)_I$ normalized as in \cite[$\S7$]{Ko97}; for instance, if $T = \mathbb G_m$, and $\varpi$ is a uniformizer of $F$, then $\kappa_T(\varpi) = 1 \in \mathbb Z$.) 

Let $J = P_{\bf a}(\mathcal O_F)$, an Iwahori subgroup of $G(F)$, and set $q = \#(\mathcal O_F/\varpi)$. The Iwahori-Hecke algebra $\mathcal H(G(F), J) = C^\infty_c(J\backslash G(F)/J)$ is a convolution algebra, defined using the Haar measure $dx_J$ with $\int_J dx_J = 1$. It is a specialization of the affine Hecke algebra ${\bf  H}(\widetilde{W}^\tau, S^\tau_{\rm aff}, L)$
$$
\mathcal H(G(F), J) \, \cong \, {\bf H}(\widetilde{W}^\tau, S^\tau_{\rm aff}, L) \otimes_{\mathbb Z[v,v^{-1}]} \mathbb C
$$
with respect to $v \mapsto q^{1/2}$. Here $S^\tau_{\rm aff} \subset {\rm Aut}_\mathbb R(X_*(A) \otimes \mathbb R)$ is the set of affine reflections through the walls of ${\bf a}$, and $L : S^\tau_{\rm aff} \rightarrow \mathbb Z$ is a system of (possibly unequal) parameters defined by
$q^{L(s)} = \int_{JsJ} dx_J$, for $s \in S^\tau_{\rm aff}$. In particular, if $T_w := 1_{JwJ}$ for $w \in \widetilde{W}^\tau$, then
$$
T^2_s = (q^{L(s)}\!-\!1)T_s + q^{L(s)} T_e.
$$

Let $\Pi_0 \subset \Sigma_0$ be the set of simple roots.  For any {\em indivisible} root $a' \in \Phi_0$ corresponding to a root $a \in \Sigma_0$, there is a unipotent subgroup $U_{a'} \subset G$ (having Lie algebra $\mathfrak g_{a'} + \mathfrak g_{2a'}$, where $\mathfrak g_{2a'} = 0$ if $2a' \notin \Phi_0$) and subgroups $U_{a +i} \subset U_{a'}(F)$, which enter into the definition of the Bruhat-Tits affine roots $\Phi_{\rm aff}$ (cf.\,\cite[0.17, $\S4$]{Land}).  

We can now describe the above parameters in the notation of Macdonald \cite[$\S3.1$]{Mac} as
$$
q^{L(s)} = q_{a/2} q_a = q_{a+1} := [U_{a}: U_{a-1}]
$$
where $s = s_a$ for $a \in \Pi_0$. Further, we have
$$
q^{L(s_0)} = q_{-\tilde{a}+1+1} := [U_{-\tilde{a}+1} : U_{-\tilde{a}}]
$$
if $s_0$ is the simple reflection corresponding to a simple affine root $-\tilde{a}+1$.

We repeat some definitions from \cite{Kn}. We say $a \in \Pi_0$ is {\em special} if it is the long simple root in a component of type $C_n$ (setting $C_1 = A_1$), and has the property that $L(s_a) \neq L(s_0)$, where $s_0$ is the reflection for the simple affine root $-\tilde{a} + 1$ in that component.  

\begin{Remark} \label{q_a_rmk} Noting that in a component of type $C_n$ with simple positive roots $$e_1-e_2, \dots, e_{n-1}-e_n, 2e_n$$ and $\tilde{a} = 2e_1$, we have $q_{-2e_1+1+1} = q_{-2e_1} = q_{2e_n}$, and hence for $a = 2e_n$, $L(s_a) \neq L(s_0)$ is equivalent to $q_{a+1} \neq q_a$.
\end{Remark}

\begin{defn} \label{Knop_def} Following \cite[(4.1.4)]{Kn} we introduce the root system $\widetilde{\Sigma}_0$ spanned by the set $\widetilde{\Pi}_0 := \{ \epsilon(a)a \, | \, a \in \Pi_0\}$, where for $a \in \Pi_0$, we define
$$
\epsilon(a) = \begin{cases} \frac{1}{2}, \,\,\, \mbox{if $a$ is special} \\ 1, \,\,\,\, \mbox{otherwise}. \end{cases}
$$ 
\end{defn}

The following theorem should be compared with Theorem \ref{Sigma_0_thm}.

\begin{theorem} \label{Knop_comp_thm} For $G/F$ quasi-split, the root system $\widetilde{\Sigma}_0$ can be characterized by the isomorphisms
\begin{align*}
\widetilde{\Sigma}_0 &\cong {\rm res}_\tau(\breve{\Sigma}) \\
\widetilde{\Sigma}^\vee_0 &\cong N'_\tau(\breve{\Sigma}^\vee). 
\end{align*}
\end{theorem}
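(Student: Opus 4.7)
My plan is to compare ${\rm res}_\tau(\breve{\Sigma})$ against $\Sigma_0\cong{\rm res}'_\tau(\breve{\Sigma})$ (Theorem \ref{Sigma_0_thm}) and to match the resulting discrepancy with Knop's halving prescription in Definition \ref{Knop_def}. By Definition \ref{norm+res_defn}, for any simple root $\beta\in\breve{\Delta}$ whose $\tau$-orbit is pairwise $(\cdot\,|\,\cdot)$-orthogonal, one has ${\rm res}_\tau(\beta)={\rm res}'_\tau(\beta)$. The argument in the proof of Lemma \ref{average_lemma}, based on Bourbaki's list of diagram automorphisms of irreducible root systems, shows that a non-orthogonal $\tau$-orbit of elements of $\breve{\Delta}$ can occur only inside an $A_{2n}$-component of $\breve{\Sigma}$ on which $\tau$ acts by the non-trivial diagram automorphism, and that in such a component the unique non-orthogonal orbit is the middle pair $\{\beta_n,\beta_{n+1}\}$. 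The coordinate calculation in the example following Proposition \ref{dual_prop} then shows that this orbit contributes the long simple root of a type-$C_n$ component of $\Sigma_0={\rm res}'_\tau(\breve{\Sigma})$, whereas ${\rm res}_\tau$ instead produces the short simple root of the dual type-$B_n$ component; all other simple roots of the two systems coincide. Consequently, ${\rm res}_\tau(\breve{\Sigma})$ is obtained from $\Sigma_0$ by halving the long simple root of each $C_n$-component of $\Sigma_0$ that arose from an $A_{2n}$-with-swap component of $\breve{\Sigma}$.

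To match this with Knop's prescription it then remains to verify: the long simple root $a$ of a $C_n$-component of $\Sigma_0$ is \emph{special} (i.e.\ $L(s_a)\neq L(s_0)$) if and only if that $C_n$-component of $\Sigma_0$ arose from an $A_{2n}$-with-swap component of $\breve{\Sigma}$. By Remark \ref{q_a_rmk} together with the factorization $q^{L(s_a)}=q_{a/2}\,q_a$ recalled in $\S\ref{Knop_subsec}$, Knop's specialty condition is equivalent to $q_{a/2}\neq 1$. By the standard Bruhat-Tits dictionary relating the constants $q_{a/2}$ to root-subgroup filtrations (cf.~\cite{BT1}, \cite{Tits}, \cite{Land}), the condition $q_{a/2}\neq 1$ holds precisely when the half-root $a/2$ is itself a relative root of $G/F$, equivalently when $\Phi_0$ is non-reduced (of type $BC_n$) at the component in question.

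The remaining step is then a classification-theoretic check. A $C_n$-component of $\Sigma_0={\rm res}'_\tau(\breve{\Sigma})$ can arise from exactly three types of $\tau$-orbits of components of $\breve{\Sigma}$: (i) a $\tau$-fixed (possibly cyclically-permuted) $C_n$-component, (ii) an $A_{2n-1}$-component under the non-trivial diagram automorphism, or (iii) an $A_{2n}$-component under the non-trivial diagram automorphism. In each case one computes $\Phi_0$ explicitly as the set of non-zero $\mathcal{W}_F$-averages of absolute roots in $\Phi$, using the description of $\breve{\Sigma}$ as $N'_I(\Phi)$ (Theorem \ref{Sigma_thm}). In cases (i) and (ii) every $\tau$-orbit of simple roots is pairwise orthogonal, and a direct coordinate verification parallel to the computation after Proposition \ref{dual_prop} shows that $\Phi_0$ is reduced of type $C_n$ at the component. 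In case (iii), on the other hand, the non-orthogonal middle orbit $\{\beta_n,\beta_{n+1}\}$ averages (without doubling) to the short root $e_n$, which together with the longer roots makes $\Phi_0$ non-reduced of type $BC_n$ at that component. This proves the first isomorphism $\widetilde{\Sigma}_0\cong{\rm res}_\tau(\breve{\Sigma})$; the dual statement $\widetilde{\Sigma}_0^\vee\cong N'_\tau(\breve{\Sigma}^\vee)$ then follows formally from Proposition \ref{dual_prop}.

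The principal obstacle is the last classification step, which couples the Galois action on $(\Phi,I,\tau)$ with the structure of $\Phi_0$; however, it reduces to the $A_{2n}$-with-swap coordinate computation already displayed in the example after Proposition \ref{dual_prop}, applied component-by-component.
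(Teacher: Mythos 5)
Your overall strategy coincides with the paper's: reduce to showing that the simple roots $a={\rm res}'_\tau(\breve a)\in\Pi_0$ that Knop halves (the special ones) are exactly those for which the orbit $\langle\tau\rangle\breve a$ is non-orthogonal, i.e.\ those coming from the middle pair of an $A_{2n}$-component of $\breve\Sigma$ on which a power of $\tau$ acts by the non-trivial involution. The first paragraph of your argument is fine. The gap is in the step where you convert Knop's condition $L(s_a)\neq L(s_0)$ into a root-theoretic condition on $\Phi_0$. Your assertion that $q_{a/2}\neq 1$ holds precisely when $a/2$ is a relative root of $G/F$ (equivalently, when $\Phi_0$ is non-reduced at the component) is false. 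The ramified quasi-split special unitary group in $2n+1$ variables (Tits's $C$-$BC_n$) has $\Phi_0$ of type $BC_n$, hence non-reduced, yet it is residually split, so every Iwahori--Hecke parameter equals $q$: one has $L(s_a)=L(s_0)=1$ and hence $q_{a/2}=1$. In Macdonald's normalization $q_{a/2}\neq 1$ is equivalent to $a/2\in\Sigma_1$, not to $a/2\in\Phi_0$, and $\Sigma_1$ is reduced for the ramified odd unitary group even though $\Phi_0$ is not. Since determining $\Sigma_1$ (equivalently $\widetilde\Sigma_0$) is precisely what is at stake, this substitution cannot be made: the parameters are genuinely arithmetic data (indices of jumps in the root-group filtrations) not determined by the reducedness of $\Phi_0$. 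Indeed, the unramified and ramified odd unitary groups have the same $\Phi_0=BC_n$ and the same $\Sigma_0=C_n$, but different parameters and different $\widetilde\Sigma_0$.

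The classification step has a related defect: a $\tau$-orbit of components of $\breve\Sigma$ does not determine $\Phi_0$, because $\Phi_0$ depends on the inertia action on the absolute roots $\Phi$ (via $\breve\Phi$), not only on $\breve\Sigma$ and $\tau$. Your case (i) (a $\tau$-fixed $C_n$-component of $\breve\Sigma$) is realized by split $Sp_{2n}$ (with $\Phi_0=C_n$), by the ramified odd unitary group (with $\Phi_0=BC_n$, non-reduced), and by the ramified form of $SO_{2n+2}$ (with $\Phi_0=B_n$), so the claim that $\Phi_0$ is ``reduced of type $C_n$'' in case (i) is incorrect. The paper's proof avoids both problems by going directly to Tits's tables of local indices, which record the actual parameters: among all quasi-split possibilities with $\Sigma_0$ of type $C_n$, only the unramified $SU(2n+1)$ (Tits's $^2A'_{2n}$) has $L(s_a)=3\neq 1=L(s_0)$, and that is exactly the case in which $\breve a$ lies in a non-orthogonal middle orbit of an $A_{2n}$-component. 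Some appeal to the tables, or an equivalent computation of the parameters themselves, appears unavoidable at this point; your conclusion is correct, but the argument as written does not establish it.
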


\begin{proof}
Our point of departure is the identification of Theorem \ref{Sigma_0_thm}: ${\rm res}'_\tau(\breve{\Pi}) = \Pi_0$, where $\breve{\Pi} \subset \breve{\Sigma}$ and $\Pi_0 \subset \Sigma_0$ are the simple roots. 

Now, to construct $\tilde{\Pi}_0$ from $\Pi_0$, we replace $a \in \Pi_0$ with $\frac{1}{2}a$ precisely when $a$ is special, that is, the long simple root in a component $C$ of type $C_n$ such that $L(s_a) \neq L(s_0)$.  Write $a = {\rm res}'_\tau(\breve{a})$ for some $\breve{a} \in \breve{\Pi}$. We claim $a$ is special if and only if
\begin{enumerate}
\item[(i)] $\breve{a}$ belongs to a component $\breve{C}$ in a union of components of type $A_{2n}$ in $\breve{\Pi}$ which are permuted by $\langle \tau \rangle$, such that
\item[(ii)] some power $\tau^i$ stabilizes and acts non-trivially on $\breve{C}$, and
\item[(iii)] in $\breve{C} = A_{2n}$, $\breve{a}$ is one of the simple roots $e_{n}-e_{n+1}$ or $e_{n+1} -e_{n+2}=\tau^i(e_n-e_{n+1})$. 
\end{enumerate}
Indeed, suppose $\breve{a}$ satisfies (i-iii). Renaming $\tau^i$ as $\tau$, we may assume $\tau$ acts (nontrivially) on $\breve{C} = A_{2n}$ and $\breve{a} = e_n- e_{n+1}$. Then $a = 2e_n$ is the long simple root in a component $C$ of type $C_n$. The group $G_{{\rm sc}, \breve{F}}$ is then isomorphic to ${\rm SL}_{2n+1}$ ($\breve{\Sigma}$ has type $A_{2n}$ implies $\breve{\Phi}$ has type $A_{2n}$ which implies $G_{{\rm sc}, \breve{F}}$ is split). The group $G_{\rm sc}/F$ is therefore a non-split unramified group which is a form of ${\rm SL}_{2n+1}$ and is nonresidually split since $\tau$ does not act trivially on $\breve{\Delta}$ (cf.\,\cite[1.11]{Tits}). Therefore $G_{\rm sc}/F$ is the group named $^2A'_{2n}$ in the first line of the table in \cite[p.63]{Tits}, in other words, an unramified ${\rm SU}(2n+1)$. The same table shows that $L(s_a) = 3$ and $L(s_0) = 1$. It follows that $a$ is special.

Conversely, assume $a$ is special, in a component $C$; clearly $C = C_n$ and $a$ is the long simple root. Then the component $\breve{C}$ containing $\breve{a}$ is one of a union permuted by $\langle \tau \rangle$, and let $\langle \tau^i \rangle$ be the stabilizer of $\breve{C}$.  Replacing $\tau^i$ by $\tau$, we may assume $G_{\rm sc}/\breve{F}$ is simple with $\breve{\Pi} = \breve{C}$ and $\Pi_0 = C$. We need to understand which simple relative root systems $\Phi_0$ can correspond to a {\em quasi-split} group $G_{\rm sc}/F$ with $\Sigma_0$ of type $C_n$ (where $C_1 := A_1$). The tables in \cite[4.2, 4.3]{Tits} show that the only possibilities for $\Phi_0$ are named the following:
\begin{enumerate}
\item[(a)] {\em residually split cases}: $A_1$, $C_n$, $C$-$B_n$, $C$-$BC_n$ ($n \geq 2$), and $C$-$BC_1$;
\item[(b)] {\em nonresidually split cases}: $^2A'_{2n-1}$ ($n \geq 2$) and $^2A'_{2n}$ ($n \geq 1$). 
\end{enumerate}
Note that the nonresidually split $\Phi_0$ with $\Sigma_0$ of type $C_n$ which are not listed in (b) (comprising 18 cases, ranging from $\Phi_0 = \,^dA_{2d-1}$ ($d \geq 2$) to $\Phi_0 =  \, ^4D_5$) correspond to $G_{\rm sc}/F$ which are visibly non-quasi-split, since $\tau$ does not fix the simple affine root for $\breve{\Delta}$.  

Now among these, we need to understand which contain $a$ as a special root. In all cases of (a), and in the cases $^2A'_{2n-1}$ of (b), the tables show $L(s_a) = 1 = L(s_0)$; these are ruled out since $a$ is special.  The only remaining possibility is $\Phi_0 = \, ^2A'_{2n}$. As seen above, this group has $\breve{\Sigma}$ of type $A_{2n}$ and $G_{{\rm sc}, \breve{F}} = {\rm SL}_{2n+1}$, with $\tau$ acting nontrivially.  Since $a$ is special, we must have $\breve{a} \in \{ e_n-e_{n+1}, e_{n+1}-e_{n+2} \}$ and these two roots are permuted by $\tau$.  (This case is really allowed, since the table shows $L(s_a) = 3 \neq 1 = L(s_0)$.) Thus $\breve{a}$ satisfies (i-iii). The claim is proved.

On the other hand, to get ${\rm res}_\tau(\breve{\Pi})$ from ${\rm res}'_\tau(\breve{\Pi})$, we replace $a = {\rm res}'_\tau(\breve{a})$ for $\breve{a} \in \breve{\Pi}$ with $\frac{1}{2}a$ precisely when  the elements of $\langle \tau \rangle\breve{a}$ are not pairwise orthogonal. But this happens exactly when $\breve{a}$ satisfies (i-iii) above.

We have proved that $\tilde{\Pi}_0$ and ${\rm res}_\tau(\breve{\Pi})$ result by applying the same procedure to $\Pi_0 = {\rm res}'_\tau(\breve{\Pi})$, hence they coincide. The theorem follows.
\end{proof}

\begin{Remark} We will use Theorem \ref{Knop_comp_thm} to make the link between the twisted Weyl character formula and the Lusztig character formula, in the proof of Theorem \ref{coeff_KL_poly}.
\end{Remark}

\subsection{Macdonald root system $\Sigma_1$}

Macdonald \cite{Mac} defined a root system $\Sigma_1 \subset \Sigma_0 \cup \frac{1}{2}\Sigma_0$ such that $\Sigma_1^{\rm red} = \Sigma_0$, by requiring $a/2 \in \Sigma_1$ for any $a \in \Sigma_0$ with $q_{a} \neq q_{a+1}$.  This plays a crucial role in harmonic analysis on $p$-adic groups which are not split (\cite[$\S3,4$]{Mac}).  The following is a corollary of Theorem \ref{Knop_comp_thm}, and when combined with Theorems \ref{Sigma_thm}, \ref{Sigma_0_thm}, it gives a way to understand $\Sigma_1$ directly in terms of Galois actions on the absolute roots $\Phi$.

\begin{cor} \label{Mac_cor} Let $G/F$ be quasi-split.  Then $\Sigma_0 \cup \widetilde{\Sigma}_0 = \Sigma_1$.  Consequently
$$
\widetilde{\Sigma}_0 = {\rm res}_\tau(\breve{\Sigma}) = \Sigma_{1, \rm red}.
$$
\end{cor}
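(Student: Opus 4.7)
The plan is to prove $\Sigma_0 \cup \widetilde{\Sigma}_0 = \Sigma_1$ by comparing the ``halved roots'' on the two sides, then read off the displayed equalities using Theorem \ref{Knop_comp_thm} and the definition of $\Sigma_{1,\mathrm{red}}$.

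To start, I would observe $\widetilde{\Sigma}_0 \subseteq \Sigma_0 \cup \tfrac{1}{2}\Sigma_0$: this is immediate from Definition \ref{Knop_def} (which gives $\widetilde{\Pi}_0 \subseteq \Pi_0 \cup \tfrac{1}{2}\Pi_0$) together with the fact that $\widetilde{\Sigma}_0$ and $\Sigma_0$ share the common Weyl group $W_0$. Macdonald's definition of $\Sigma_1$ already places it inside the same set. The desired equality therefore reduces to showing that the two ``halving subsets''
\begin{equation*}
H_{\mathrm K} := \{a \in \Sigma_0 \mid a/2 \in \widetilde{\Sigma}_0\}, \qquad H_{\mathrm M} := \{a \in \Sigma_0 \mid q_a \neq q_{a+1}\}
\end{equation*}
coincide.

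I would check next that both sets are $W_0$-stable: $H_{\mathrm K}$ by the $W_0$-invariance of $\widetilde{\Sigma}_0$, and $H_{\mathrm M}$ because the Bruhat-Tits filtration $\{U_{a+i}\}$ is $W_0$-equivariant, so the parameters $q_a$ and $q_{a+1}$ depend only on the $W_0$-orbit of $a$. Since every $W_0$-orbit in $\Sigma_0$ meets $\Pi_0$, it suffices to verify $H_{\mathrm K} \cap \Pi_0 = H_{\mathrm M} \cap \Pi_0$. For $a \in \Pi_0$, the reducedness of $\widetilde{\Sigma}_0$ forces $a/2 \in \widetilde{\Sigma}_0$ iff $a/2 \in \widetilde{\Pi}_0$ iff $a$ is special (because when $a$ is not special, one has $a \in \widetilde{\Pi}_0 \subset \widetilde{\Sigma}_0$, blocking $a/2$); so $H_{\mathrm K} \cap \Pi_0$ is the set of special simple roots. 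On the Macdonald side, the relation $q_{a+1} = q_{a/2}\, q_a$ converts $a \in H_{\mathrm M}$ into $q_{a/2} \neq 1$, equivalent to $a/2 \in \Phi_0$, which for quasi-split $G/F$ only happens when $a$ is the long simple root in a $C_n$-component of $\Sigma_0$ coming from a $BC_n$-component of $\Phi_0$. Remark \ref{q_a_rmk} exhibits this as precisely the ``special'' condition, giving $H_{\mathrm K} \cap \Pi_0 = H_{\mathrm M} \cap \Pi_0$ and hence $\Sigma_0 \cup \widetilde{\Sigma}_0 = \Sigma_1$.

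The final assertion is then immediate: the equality $\widetilde{\Sigma}_0 = \mathrm{res}_\tau(\breve{\Sigma})$ is Theorem \ref{Knop_comp_thm}, while the divisible roots of $\Sigma_1$ are exactly the elements of $H_{\mathrm M}$, so $\Sigma_{1,\mathrm{red}} = (\Sigma_0 \setminus H_{\mathrm M}) \cup \{a/2 \mid a \in H_{\mathrm M}\}$---and the analogous description of $\widetilde{\Sigma}_0$ obtained from the $W_0$-action on $\widetilde{\Pi}_0$ is the same set, using $H_{\mathrm K} = H_{\mathrm M}$. The main subtle point throughout is the bridge between Knop's length-function definition of ``special'' and Macdonald's Hecke-parameter condition $q_a \neq q_{a+1}$; Remark \ref{q_a_rmk} together with the $^2A'_{2n}$ case analysis from the proof of Theorem \ref{Knop_comp_thm} provides precisely this bridge.
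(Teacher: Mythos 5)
Your overall strategy coincides with the paper's: reduce by $W_0$-invariance to simple roots $a \in \Pi_0$ and then match Knop's ``special'' condition against Macdonald's condition $q_a \neq q_{a+1}$ via Remark \ref{q_a_rmk}. However, the bridge you build between the two conditions contains a genuine error. You claim that $a \in H_{\mathrm M}$, i.e.\ $q_{a/2} \neq 1$, is \emph{equivalent} to $a/2 \in \Phi_0$, and then that ``$a$ is the long simple root of a $C_n$-component of $\Sigma_0$ coming from a $BC_n$-component of $\Phi_0$'' is precisely the special condition. Both biconditionals fail in one direction. The counterexamples are the residually split cases $\Phi_0 = C\text{-}BC_n$ ($n \geq 2$) and $C\text{-}BC_1$ (ramified quasi-split odd unitary groups), which appear in list (a) in the proof of Theorem \ref{Knop_comp_thm}: there $\Phi_0$ is of type $BC_n$, so $a/2 \in \Phi_0$ for the long simple root $a$, yet Tits' tables give $L(s_a) = 1 = L(s_0)$, whence $q_{a+1} = q_a$ and $q_{a/2} = 1$. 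So $a/2 \in \Phi_0$ does \emph{not} imply $q_{a/2} \neq 1$, and such an $a$ is \emph{not} special (specialness requires $L(s_a) \neq L(s_0)$ in addition to being the long root of a $C_n$-component). Your argument would wrongly place $a/2$ in both $\Sigma_1$ and $\widetilde{\Sigma}_0$ for these groups. You have also misquoted Remark \ref{q_a_rmk}: it does not identify specialness with non-reducedness of $\Phi_0$; it says that, for the long simple root of a $C_n$-component, $L(s_a) \neq L(s_0)$ is equivalent to $q_a \neq q_{a+1}$.

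The repair is to drop the detour through $\Phi_0$ entirely and argue as the paper does. Forward direction: if $a/2 \in \widetilde{\Sigma}_0$ then $a$ is special, so $L(s_a) \neq L(s_0)$ by Definition \ref{Knop_def}, so $q_a \neq q_{a+1}$ by Remark \ref{q_a_rmk}, so $a/2 \in \Sigma_1$. Reverse direction: if $a/2 \in \Sigma_1$ then (reducing to an irreducible component) $\Sigma_1$ is non-reduced of type $BC_n$, forcing $\Sigma_0$ to be of type $C_n$ with $a$ its long simple root; since $q_a \neq q_{a+1}$ by Macdonald's definition, Remark \ref{q_a_rmk} gives $L(s_a) \neq L(s_0)$, so $a$ is special and $a/2 \in \widetilde{\Sigma}_0$. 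The implication $q_{a/2} \neq 1 \Rightarrow a/2 \in \Phi_0$ that you use is true and harmless in this direction, but the converse is what your write-up actually relies on, and it is false. The rest of your argument (the $W_0$-stability of both halving sets, the identification of $\widetilde{\Sigma}_0 = {\rm res}_\tau(\breve{\Sigma})$ from Theorem \ref{Knop_comp_thm}, and the computation of $\Sigma_{1, \rm red}$) is fine.
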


\begin{proof}
We may assume $\Sigma_0$ is irreducible. Then $\Sigma_1$ is also irreducible, and $\Sigma_0$, $\widetilde{\Sigma}_0$, and $\Sigma_1$ all have the same Weyl group, $W_0$. We suppose $a \in \Sigma_0$. We need to show that $\frac{1}{2}a \in \widetilde{\Sigma}_0$ if and only if $\frac{1}{2}a \in \Sigma_1$.  Since $a$ is $W_0$-conjugate to an element in $\Pi_0$, we may assume $a \in \Pi_0$.  If $\frac{1}{2}a  \in \widetilde{\Sigma}$, then $a$ is special and Remark \ref{q_a_rmk} shows that $q_a \neq q_{a+1}$, hence $\frac{1}{2}a \in \Sigma_1$.  Conversely, if $\frac{1}{2}a \in \Sigma_1$, then $\Sigma_1$ is not reduced, so it is of type $BC_n$. Then $\Sigma_0$ is of type $C_n$, and $a$ is the long simple root.  By definition of $\Sigma_1$, we have $q_a \neq q_{a+1}$, and so again by Remark \ref{q_a_rmk}, we see $a$ is special, and $\frac{1}{2}a \in \widetilde{\Sigma}_0$.
\end{proof}

\section{The geometric basis and the stable Bernstein center} \label{geom+stable-center_sec}

Throughout this section, we retain the assumptions of $\S\ref{Sigma0_subsec}$: $G, B, S, T$ are defined and quasi-split over $F$. The case where $G$ is not necessarily quasi-split over $F$ will be considered elsewhere.

\subsection{The geometric basis for the center of a parahoric Hecke algebra} \label{geom_basis_sec}

In this section we construct a natural basis for the center of a parahoric Hecke algebra indexed by certain highest weight representations of $\widehat{G}^I$. We call this the {\em geometric basis}.

%Let $\mathcal W_F$ be the Weil group of the nonarchimedean local field $F$, and write $I \subset %\mathcal W_F$ for the inertia subgroup; recall $I$ is isomorphic to the absolute Galois group of %$\breve{F}$. Let $\tau \in \mathcal W_F$ denote a geometric Frobenius element. 

The complex dual group $\widehat{G}$ carries a $\mathcal W_F$-fixed splitting $(\widehat{B}, \widehat{T}, \widehat{X})$, and we set $^LG := \widehat{G} \rtimes \mathcal W_F$. Let $\widehat{G}^{I, \circ}$ denote the connected component of the reductive group $\widehat{G}^I$. We identify the Weyl group of $\widehat{G}$ with $W$, and then the Weyl group of $\widehat{G}^{I, \circ}$ is identified with $W^I = \breve{W}$ and the Weyl group of $\widehat{G}^{\mathcal W_F, \circ} = \widehat{G}^{I, \tau, \circ}$ is identified with $W_0 = (\breve{W})^{\langle \tau \rangle}$, the relative Weyl group of $G/F$.  See \cite[Prop.\,4.1]{H15}. %Let $\Phi_0 \subset X^*(S)^\tau %\otimes \mathbb R$ be the set of relative roots for $G/F$, in other words, the set of $\mathcal %W_F$-averages of the elements of $\Phi$.

Let $n$ denote the order of $\tau \in {\rm Aut}(\widehat{G}^I)$.  Let $\xi \in \mathbb C^\times$ be an $n$-th root of unity. 

If $\bar{\lambda} \in X^*(\widehat{T}^I)^{+, \tau}$, then $V_{\bar{\lambda}} \in {\rm Rep}(\widehat{G}^I)$ can be extended uniquely to a representation $(V_{\bar{\lambda}, \xi}, r_{\bar{\lambda}, \xi})$ of $\widehat{G}^I \rtimes \langle \tau \rangle$ such that $\tau$ acts by the scalar $\xi$  on the weight spaces associated to all ${\bar{\lambda}}' \in W_0\bar{\lambda}$. 

The irreducible $\widehat{T}^I \rtimes \langle \tau \rangle$-subrepresentations of $V_{\bar{\lambda},\xi}$ are of the form
$$
V_0 \oplus \tau(V_0) \oplus \cdots \oplus \tau^{d-1}(V_0)
$$
where $V_0$ is some 1-dimensional $\widehat{T}^I$-subrepresentation of weight $\bar{\nu} \in {\mathcal Wt}(\bar{\lambda})$, $d$ (a divisor of $n$) is the smallest element of $\mathbb N$ with $\tau^d(\bar{\nu}) = \bar{\nu}$ , and $\tau^d$ acts on $V_0$ by multiplication by some $\xi'_{n/d} \in {\mathbb \mu}_{n/d}(\mathbb C)$. 

\begin{defn} \label{Trep_defn}
Write this representation as $V_0(\bar{\nu}, d, \xi'_{n/d})$; write $m_{\bar{\lambda}, \xi}(\bar{\nu}, d, \xi'_{n/d})$ for its multiplicity in $V_{\bar{\lambda}, \xi}|_{\widehat{T}^I \rtimes \langle \tau \rangle}$.  If $d=1$, write it simply as $\bar{\nu} \boxtimes \xi'$, and write $m_{\bar{\lambda},\xi}(\bar{\nu}, \xi')$ for the multiplicity. 
\end{defn}
Thus
\begin{equation} \label{rest_eq}
V_{\bar{\lambda}, \xi}|_{\widehat{T}^I \rtimes \langle \tau \rangle} = \bigoplus_{\bar{\nu}, \xi'} m_{\bar{\lambda}, \xi}(\bar{\nu}, \xi') \, \bar{\nu} \boxtimes \xi'  ~ \oplus  ~ 
\bigoplus_{\underset{d > 1}{\bar{\nu},\,d,\,\xi'_{n/d}}} m_{\bar{\lambda}, \xi}(\bar{\nu}, d, \xi'_{n/d}) \, V_0(\bar{\nu}, d,\xi'_{n/d}).
\end{equation}
Here $\bar{\nu}$ ranges over ${\mathcal Wt}(\bar{\lambda})$. Note that an element $\bar{\nu} \in X^*(\widehat{T}^I)^{\tau}$ is $B$-dominant if and only if $\langle \alpha, \bar{\nu} \rangle \geq 0$ for all $\alpha \in \breve{\Phi}^{+}$, or equivalently, $\langle \alpha, \bar{\nu} \rangle \geq 0$ for all $\alpha \in \Phi^+_0$.

\begin{lemma} \label{W_mult_lem} Fix $\bar{\nu} \in {\mathcal Wt}(\bar{\lambda}) \cap X^*(\widehat{T}^I)^{+, \tau}$. Then:
\begin{enumerate}
%\item[(1)] For any $\bar{\nu}' \in X^*(\widehat{T}^I)^\tau$, some element of $\breve{W} \, %\bar{\nu}'$ is $B$-dominant; and 
\item[(1)] $\breve{W}\bar{\nu} \cap X^*(\widehat{T}^I)^{\tau} = W_0 \,\bar{\nu}$; and
\item[(2)] $m_{\bar{\lambda}, \xi}(\bar{\nu}, d, \xi'_{n/d}) = m_{\bar{\lambda}, \xi}(\bar{\nu}', d, \xi'_{n/d}) $ for all $\bar{\nu}' \in W_0 \bar{\nu}$.  
\end{enumerate}
\end{lemma}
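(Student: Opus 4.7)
Both parts reduce to standard statements about the connected reductive group $\widehat{G}^{I,\circ}$, which by \cite[Prop.\,4.1]{H15} (cf.\,$\S\ref{dual_grp_sec}$) has Weyl group $\breve{W}$ acting on its maximal torus $\widehat{T}^{I,\circ}$ and carries a $\tau$-action preserving the pinning $(\widehat{B}^{I,\circ}, \widehat{T}^{I,\circ}, \widehat{X})$; the Weyl group of the fixed-point subgroup $\widehat{G}^{\mathcal W_F,\circ}$ is $W_0 = \breve{W}^\tau$, with root system $\Sigma_0$ determined by Theorem \ref{Sigma_0_thm}.

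For (1), the inclusion $W_0 \bar{\nu} \subseteq \breve{W}\bar{\nu} \cap X^*(\widehat{T}^I)^\tau$ is immediate. For the converse, I would first pass to the torsion-free quotient $X^*(\widehat{T}^{I,\circ}) = X_*(T)_I/{\rm tors}$ via $\bar{\nu} \mapsto \bar{\nu}^\flat$. The $\breve{W}$-chamber in $X^*(\widehat{T}^{I,\circ}) \otimes \mathbb R$, intersected with the $\tau$-fixed subspace, coincides with the $W_0$-chamber (a standard consequence of $\Sigma_0^\vee = N_\tau(\breve{\Sigma}^\vee)$); hence $\bar{\nu}^\flat$, being $\breve{W}$-dominant and $\tau$-fixed, is simultaneously the unique $\breve{W}$-dominant element of $\breve{W}\bar{\nu}^\flat$ and $W_0$-dominant. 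An arbitrary $(w\bar{\nu})^\flat \in \breve{W}\bar{\nu}^\flat \cap X^*(\widehat{T}^{I,\circ})^\tau$ can be brought into the $W_0$-chamber by some $w_0 \in W_0$, and the resulting element is then also $\breve{W}$-dominant, so must equal $\bar{\nu}^\flat$. Finally, $(w_0 w - 1)\bar{\nu}$ lies in the coroot lattice $Q^\vee(\breve{\Sigma}) \subset X_*(T)_I$ and has zero image in $X^*(\widehat{T}^{I,\circ})$; since $Q^\vee(\breve{\Sigma})$ is torsion-free (as used in the proof of Theorem \ref{PRS_thm}), it is zero, and $w\bar{\nu} = w_0^{-1}\bar{\nu} \in W_0\bar{\nu}$.

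For (2), the strategy is to lift $w_0 \in W_0 = \breve{W}^\tau$ to a genuinely $\tau$-fixed element $\dot{w}_0 \in N_{\widehat{G}^{I,\circ}}(\widehat{T}^{I,\circ})$. Under the isomorphism $W_0 = \breve{W}^\tau$, each simple reflection of $W_0$ for the root system $\Sigma_0$ is realized as the longest element $w_\pi$ of a $\tau$-stable finite parabolic subgroup $\breve{W}_\pi \subset \breve{W}$ generated by a $\tau$-orbit $\pi$ of simple reflections of $\breve{W}$ (cf.\,the Remark after Theorem \ref{Sigma_0_thm}). Each such $w_\pi$ is the unique longest element of $\breve{W}_\pi$, hence $\tau$-fixed, so its Tits lift $\dot{w}_\pi$ is also $\tau$-fixed by the characterizing formulas of Tits lifts. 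A product of these lifts along any expression of $w_0$ gives a $\tau$-fixed $\dot{w}_0$, and hence $\dot{w}_0$ commutes with $\tau$ in $\widehat{G}^I \rtimes \langle \tau \rangle$. Acting on $V_{\bar{\lambda},\xi}$, conjugation by $\dot{w}_0$ permutes $\widehat{T}^I$-weight spaces via $w_0$ and intertwines the $\tau$-action; it therefore yields $\widehat{T}^I \rtimes \langle\tau\rangle$-isomorphisms identifying each summand $V_0(\bar{\nu}, d, \xi'_{n/d})$ in (\ref{rest_eq}) with $V_0(w_0\bar{\nu}, d, \xi'_{n/d})$, which gives the multiplicity equality in (2).

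\textbf{Main obstacle.} The delicate point is producing the $\tau$-fixed lift $\dot{w}_0$ in (2): without $\tau(\dot{w}_0) = \dot{w}_0$, conjugation would intertwine $\tau$ only up to an inner twist by an element of $\widehat{T}^{I,\circ}$, which in general alters the scalar data $\xi'_{n/d}$ attached to each summand and obstructs the direct comparison of isotypic components. The clean resolution rests on the structural fact -- already used in the proof of Theorem \ref{Knop_comp_thm} -- that the simple reflections of $\Sigma_0$ appear as longest elements of $\tau$-stable finite parabolic subgroups of $\breve{W}$, which makes the Tits-lift argument go through.
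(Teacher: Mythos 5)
Your argument is correct and follows essentially the same route as the paper: part (1) is the uniqueness of the $B$-dominant representative in a $\breve{W}$-orbit (you additionally make explicit the torsion point via $Q^\vee(\breve{\Sigma})$, which the paper elides), and part (2) rests on lifting $W_0$ to $\tau$-fixed elements of $N_{\widehat{G}^{I,\circ}}(\widehat{T}^{I,\circ})$ normalizing $\widehat{T}^I$. The paper simply cites \cite[Prop.\,4.1]{H15} for that lifting fact, whereas you re-derive it via Tits lifts of the longest elements of $\tau$-stable parabolics; both are fine.
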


\begin{proof}
Let $\bar{\nu}' \in \breve{W}\bar{\nu} \cap X^*(\widehat{T}^I)^\tau$ and choose $\bar{\nu}'' \in W_0 \, \bar{\nu}'$ which is $B$-dominant. Then $\bar{\nu}'' \in \breve{W}\bar{\nu}$ and is $B$-dominant, hence $\bar{\nu}'' = \bar{\nu}$.  This proves (1). For (2) we use the fact that elements of $W_0$ can be lifted to $\mathcal W_F$-fixed elements in $N_{\widehat{G}^{I, \circ}}(\widehat{T}^{I, \circ})$ which normalize $\widehat{T}^I$, by \cite[Prop.\,4.1]{H15}.
\end{proof}

Let $J \subset G(F)$ be any parahoric subgroup which corresponds to a $\tau$-stable facet in the apartment $X_*(S)\otimes \mathbb R$ of $\mathcal B(G, \breve{F})$, and let $\mathcal Z(G(F), J)$ be the center of the corresponding Hecke algebra. For $\bar{\nu} \in X^*(\widehat{T}^I)^{+, \tau}$, let $[\bar{\nu}] := \sum_{\bar{\nu}' \in W_0 \, \bar{\nu}} \bar{\nu}'$. Under the Bernstein isomorphism (\cite[Thm.\,11.9.1]{H14})
\begin{equation} \label{Bern_isom}
\mathbb C[X^*(\widehat{T}^I)^\tau]^{W_0} ~~ \overset{\sim}{\longrightarrow} ~~ \mathcal Z(G(F), J),
\end{equation}
the class $[\bar{\nu}]$ goes over to a function $z_{\bar{\nu}, J} \in \mathcal Z(G(F), J)$. By \cite[11.8]{H14}, it is characterized as follows: if $\pi$ is an irreducible admissible representation of $G(F)$ with $\pi^J \neq 0$, and if $s(\pi) \in (\widehat{T}^I)_{\langle \tau \rangle}/W_0$ is the Satake parameter attached to $\pi$ by \cite{H15}, then 
\begin{equation} \label{Bern_char}
z_{\bar{\nu}, J} \,\,\, \,\mbox{acts on $\pi^J$ by the scalar} \,\,\,\, {\rm tr}(s(\pi) \rtimes \tau \, | \, [\bar{\nu}] \boxtimes 1)
\end{equation}
where $[\bar{\nu}] \boxtimes 1 := \sum_{\bar{\nu}' \in W_0 \, \bar{\nu}} \bar{\nu}' \boxtimes 1$. (See Definition \ref{Trep_defn}.) It is known that $\{ z_{\bar{\lambda}, J} \}_{\bar{\lambda}}$ forms a basis for $\mathcal Z(G(F), J)$, as $\bar{\lambda}$ ranges over $X^*(\widehat{T}^I)^{+, \tau}$ (see \cite[Thm.\,11.10.1]{H14}).

For $\bar{\lambda} \in X^*(\widehat{T}^I)^{+, \tau}$, let ${\mathcal Wt}(\bar{\lambda})^+ = {\mathcal Wt}(\bar{\lambda}) \cap X^*(\widehat{T}^I)^+$, ${\mathcal Wt}(\bar{\lambda})^\tau = {\mathcal Wt}(\bar{\lambda}) \cap X^*(\widehat{T}^I)^\tau$, and ${\mathcal Wt}(\bar{\lambda})^{+, \tau} = {\mathcal Wt}(\bar{\lambda})^+ \cap {\mathcal Wt}(\bar{\lambda})^\tau$.

The operator $\tau \in {\rm Aut}(V_{\bar{\lambda},1})$ preserves the $\widehat{T}^I$-weight space $V_{\bar{\lambda}, 1}(\bar{\nu})$, since the weight $\bar{\nu}$ is $\tau$-fixed. 

\begin{defn} \label{C_defn}
For $\bar{\lambda} \in X^*(\widehat{T}^I)^{+, \tau}$, define the {\em geometric basis} element of $\mathcal Z(G(F), J)$ by setting
$$
C_{\bar{\lambda}, J} = \sum_{\underset{\xi'}{\bar{\nu} \in {\mathcal Wt}(\bar{\lambda})^{+, \tau}}} m_{\bar{\lambda}, 1}(\bar{\nu}, \xi') \, \xi' \, z_{\bar{\nu}, J} = \sum_{\bar{\nu} \in {\mathcal Wt}(\bar{\lambda})^{+, \tau}} {\rm tr}(\tau \, | \, V_{\bar{\lambda}, 1}(\bar{\nu}))\, z_{\bar{\nu}, J}.
$$
\end{defn}
Note that $m_{\bar{\lambda}, 1}(\bar{\lambda}, 1) = 1$ and $m_{\bar{\lambda}, 1}(\bar{\lambda}, \xi') = 0$ if $\xi' \neq 1$. This, together with Lemma \ref{W_mult_lem} and the above discussion, yields the following lemma. 

\begin{lemma} \label{geom_basis_lem}
The elements $\{ C_{\bar{\lambda}, J} \}_{\bar{\lambda}}$ form a basis for $\mathcal Z(G(F),J)$, characterized as follows: in the notation of \textup{(}\ref{Bern_char}\textup{)},
\begin{equation} \label{C_char}
C_{\bar{\lambda}, J} \,\,\, \,\mbox{acts on $\pi^J$ by the scalar} \,\,\,\, {\rm tr}(s(\pi) \rtimes \tau \, | \, V_{\bar{\lambda},1}).
\end{equation}
\end{lemma}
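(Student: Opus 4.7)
The plan is to verify the scalar action formula (\ref{C_char}) directly from Definition \ref{C_defn} and the characterization (\ref{Bern_char}) of the Bernstein basis, and then to deduce that $\{C_{\bar\lambda,J}\}$ is a basis by showing the change-of-basis matrix from the Bernstein basis $\{z_{\bar\nu,J}\}$ is unitriangular with respect to the dominance order. The main computation is a trace calculation on the $\widehat T^I\rtimes\langle\tau\rangle$-decomposition (\ref{rest_eq}).

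To compute ${\rm tr}(s(\pi)\rtimes\tau\,|\,V_{\bar\lambda,1})$, I would decompose $V_{\bar\lambda,1}|_{\widehat T^I\rtimes\langle\tau\rangle}$ via (\ref{rest_eq}) and evaluate the trace summand by summand. The crucial observation is that on any summand $V_0(\bar\nu,d,\xi'_{n/d})$ with $d>1$, the weights $\bar\nu,\tau\bar\nu,\ldots,\tau^{d-1}\bar\nu$ are pairwise distinct one-dimensional $\widehat T^I$-subspaces which $\tau$ permutes cyclically, while $s(\pi)\in(\widehat T^I)_{\langle\tau\rangle}/W_0$ preserves each of them. In a basis adapted to this weight decomposition, the matrix of $s(\pi)\rtimes\tau$ has all diagonal entries equal to zero, so this summand contributes nothing to the trace. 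Consequently only the $d=1$ pieces $\bar\nu\boxtimes\xi'$ survive, and each contributes $\bar\nu(s(\pi))\cdot\xi'$ with multiplicity $m_{\bar\lambda,1}(\bar\nu,\xi')$.

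Next I would rearrange the surviving sum by $W_0$-orbits of $\tau$-fixed weights. Lemma \ref{W_mult_lem}(1) lets me reindex the sum over ${\mathcal Wt}(\bar\lambda)^\tau$ as a sum over $\bar\nu\in{\mathcal Wt}(\bar\lambda)^{+,\tau}$ together with a sum over $W_0\bar\nu$, and Lemma \ref{W_mult_lem}(2) shows that the multiplicity $m_{\bar\lambda,1}(\bar\nu',\xi')$ is constant on each such orbit. The inner $W_0$-sum then assembles into ${\rm tr}(s(\pi)\rtimes\tau\,|\,[\bar\nu]\boxtimes 1)$, which by (\ref{Bern_char}) equals the scalar by which $z_{\bar\nu,J}$ acts on $\pi^J$. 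Combining the resulting identity with the elementary fact $\sum_{\xi'}m_{\bar\lambda,1}(\bar\nu,\xi')\,\xi'={\rm tr}(\tau\,|\,V_{\bar\lambda,1}(\bar\nu))$ identifies the trace with the scalar action of the element $C_{\bar\lambda,J}$ of Definition \ref{C_defn}, establishing (\ref{C_char}).

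Finally, for the basis claim I would inspect the expansion in Definition \ref{C_defn} itself: the sum runs over weights $\bar\nu\preceq\bar\lambda$, and for the top term $\bar\nu=\bar\lambda$ the weight space $V_{\bar\lambda,1}(\bar\lambda)$ is one-dimensional with $\tau$ acting by the scalar $1$ by construction, so the coefficient of $z_{\bar\lambda,J}$ equals $1$. Hence $C_{\bar\lambda,J}=z_{\bar\lambda,J}+\sum_{\bar\nu\prec\bar\lambda}c_{\bar\nu,\bar\lambda}\,z_{\bar\nu,J}$, which exhibits the transition matrix between $\{z_{\bar\nu,J}\}$ and $\{C_{\bar\lambda,J}\}$ as unitriangular in the dominance order and hence invertible; together with the known fact that $\{z_{\bar\nu,J}\}$ is a basis this yields the claim. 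I do not anticipate a serious obstacle; the only step requiring care is the vanishing of the $d>1$ contributions to the trace, which is a direct consequence of the permutation structure of $\tau$ on the relevant weight-space basis.
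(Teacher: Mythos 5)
Your proposal is correct and follows exactly the argument the paper intends (the paper only sketches it): the vanishing of the $d>1$ terms in (\ref{rest_eq}) because $\tau$ cyclically permutes the distinct weight lines while $s(\pi)$ preserves them, the reindexing of the surviving $d=1$ terms over $W_0$-orbits via Lemma \ref{W_mult_lem} to match (\ref{Bern_char}), and the unitriangularity coming from $m_{\bar{\lambda},1}(\bar{\lambda},1)=1$ and $m_{\bar{\lambda},1}(\bar{\lambda},\xi')=0$ for $\xi'\neq 1$. No gaps.
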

Note that in (\ref{rest_eq}), the terms with $d > 1$ make no contribution to ${\rm tr}(s(\pi) \rtimes \tau ~ | ~ V_{\bar{\lambda}, 1})$.

\subsection{Geometric basis in terms of Kazhdan-Lusztig polynomials}

Fix $\bar{\nu} \in {\mathcal Wt}(\bar{\lambda})^{+, \tau}$ as above. View $\bar{\lambda}$ and $\bar{\nu}$ as elements in $(X_*(T)_I)^\tau$.  These can be viewed as ``translation'' elements $t_{\bar{\lambda}}$, $t_{\bar{\nu}}$ in the extended affine Weyl group 
\begin{equation} \label{Wtilde^tau}
\widetilde{W}^\tau = (X_*(T)_I)^\tau \rtimes W_0 = \breve{W}^\tau_{\rm aff} \rtimes \Omega^\tau_{\breve{\bf a}}
\end{equation}
for the group $G/F$ (we are using the set-up of $\S\ref{Sigma0_subsec}$). Recall that the root system $\Sigma_0$ associated to the relative roots $\Phi_0$ for $G/F$ satisfies
$$
\breve{W}^\tau_{\rm aff} \cong \mathbb Z[\Sigma_0^\vee] \rtimes W_0.
$$
%(see \cite[Lem.\,11.3.1]{H14})
The quasi-Coxeter group structure on the right hand side of (\ref{Wtilde^tau}) is used to define the Bruhat order $\leq$ and the length function $\ell$ on $\widetilde{W}^\tau$ in the usual way.  For $x, y \in \widetilde{W}^\tau$ with the same $\Omega^\tau_{\bf a}$-component we can define the Kazhdan-Lusztig polynomials $P_{x,y}(q^{1/2}) \in \mathbb Z[q^{1/2}]$. Here we need to use the KL polynomials attached to the Coxeter group $\breve{W}^\tau_{\rm aff}$ and possibly unequal parameters; in our situation (see $\S\ref{Knop_subsec}$) we need a special case of the general theory of Lusztig \cite{Lus03}: the parameters are of the form $q^{L(w)/2}$, where  $L: \widetilde{W}^\tau \rightarrow \mathbb Z_{\geq 1}$ is defined as in $\S\ref{Knop_subsec}$, and satisfies the properties required by Lusztig \cite[3.1]{Lus03}, such as $L(w) > 0$ for $w \in \widetilde{W}^\tau$.  See also \cite{Kn}.

\begin{theorem} \label{coeff_KL_poly} For $\bar{\lambda}  \in (X_*(T)_I)^\tau$, let $w_{\bar{\lambda}} \in W_0 t_{\bar{\lambda}} W_0$ be the unique element of maximal length in this double coset. Then for $\bar{\nu} \in {\mathcal Wt}(\bar{\lambda})^{+, \tau}$,
$$
{\rm tr}(\tau \, | \, V_{\bar{\lambda}, 1}(\bar{\nu})) = P_{w_{\bar{\nu}}, w_{\bar{\lambda}}} (1), 
$$
and thus
$$
C_{\bar{\lambda}, J} = \sum_{\bar{\nu} \in {\mathcal Wt}(\bar{\lambda})^{+, \tau}} P_{w_{\bar{\nu}}, w_{\bar{\lambda}}} (1) \, z_{\bar{\nu}, J}.
$$
\end{theorem}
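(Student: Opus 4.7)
The plan is to derive the identity $\mathrm{tr}(\tau \mid V_{\bar{\lambda},1}(\bar{\nu})) = P_{w_{\bar{\nu}},w_{\bar{\lambda}}}(1)$ by matching two character-theoretic computations of the same twisted weight multiplicity — one representation-theoretic, one combinatorial — glued together by the identification $\widetilde{\Sigma}_0 = \mathrm{res}_\tau(\breve{\Sigma})$ of Theorem \ref{Knop_comp_thm}. Once this identity is established, substituting it into Definition \ref{C_defn} yields the asserted formula for $C_{\bar{\lambda}, J}$ in terms of the Bernstein basis.

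The first half of the argument extracts the left-hand side. I would apply the twisted Weyl character formula (Theorem \ref{twisted_Weyl}) to the extended representation $V_{\bar{\lambda},1}$ of $\widehat{G}^I \rtimes \langle \tau \rangle$. This produces a $W_0$-alternating expression for the twisted character on $\widehat{T}^I \rtimes \langle \tau \rangle$ whose Fourier coefficients at $\bar{\nu}$ are precisely the twisted weight multiplicities $\mathrm{tr}(\tau \mid V_{\bar{\lambda},1}(\bar{\nu}))$. By Lemma \ref{W_mult_lem}(2), these coefficients are constant on $W_0$-orbits in ${\mathcal Wt}(\bar{\lambda})^{+,\tau}$, so it suffices to compute them for $\bar{\nu}$ dominant.

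The second half of the argument identifies the right-hand side. Here I invoke Knop's version of the Lusztig character formula (Theorem \ref{Lus_char_form}), which computes such twisted weight multiplicities as specializations at $q=1$ of Kazhdan-Lusztig polynomials attached to an affine Hecke algebra with possibly unequal parameters. In Knop's setup, the relevant root system is his $\widetilde{\Sigma}_0$ from Definition \ref{Knop_def}, and the ``doubling'' on special simple roots $a$ (where $\epsilon(a) = 1/2$) is precisely the mechanism by which unequal parameters enter. The extended affine Weyl group in Knop's formula is the one associated to $\widetilde{\Sigma}_0$; the indexing elements $w_{\bar{\nu}}, w_{\bar{\lambda}}$ are the maximal-length double-coset representatives in $W_0 t_{\bar{\nu}} W_0, W_0 t_{\bar{\lambda}} W_0$.

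The bridge between the two sides is Theorem \ref{Knop_comp_thm}: $\widetilde{\Sigma}_0 \cong \mathrm{res}_\tau(\breve{\Sigma})$. Coupled with the decomposition $\breve{W}^\tau_{\rm aff} \cong \mathbb Z[\Sigma_0^\vee] \rtimes W_0$ and the Macdonald comparison of Corollary \ref{Mac_cor}, this shows that the extended affine Weyl group $\widetilde{W}^\tau$ equipped with the Coxeter structure $(\breve{W}^\tau_{\rm aff}, S^\tau_{\rm aff})$ is exactly the combinatorial object appearing on the Knop side. The main obstacle will be to verify that Knop's internal parameter function coincides with the parameter function $L: S^\tau_{\rm aff} \to \mathbb Z_{\geq 1}$ coming from the parahoric Hecke algebra $\mathcal H(G(F), J)$. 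This is where the detailed analysis behind Theorem \ref{Knop_comp_thm} is essential: Knop's ``special'' simple roots (those at which the parameter doubles) correspond exactly to the simple roots $a \in \Pi_0$ with $L(s_a) \neq L(s_0)$ by the case analysis using Tits's tables (see Remark \ref{q_a_rmk}). Once this parameter match is confirmed, both formulas compute the same integer, and the theorem follows.
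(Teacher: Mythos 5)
Your proposal is correct and follows essentially the same route as the paper: the identity is obtained by equating the twisted Weyl character formula (Theorem \ref{twisted_Weyl}) with Knop's unequal-parameter analogue of the Lusztig character formula (Theorem \ref{Lus_char_form}), with Theorem \ref{Knop_comp_thm} supplying the identification $\widetilde{\Sigma}_0 \cong {\rm res}_\tau(\breve{\Sigma})$ (equivalently $N'_\tau(\breve{\Sigma}^\vee) \cong \widetilde{\Sigma}_0^\vee$) that makes the two sides refer to the same root datum, and with the parameter match traced back to the ``special root'' analysis of Remark \ref{q_a_rmk}. The only step you elide is the paper's preliminary reduction via Lemma \ref{ht_wt_lem}(4) from $\widehat{G}^I$ to the connected group $\widehat{G}^{I,\circ}$ (replacing $\bar{\lambda},\bar{\nu}$ by $\bar{\lambda}^\flat,\bar{\nu}^\flat$, with $P_{w_{\bar{\nu}},w_{\bar{\lambda}}}=P_{w_{\bar{\nu}^\flat}},{}_{w_{\bar{\lambda}^\flat}}$ since the torsion lies in $\Omega_{\breve{\bf a}}$), which is needed because both character formulas are stated for connected reductive groups.
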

We will prove Theorem \ref{coeff_KL_poly} in subsection \ref{KL_poly_pf_sec}.

\begin{Remark}
 Computing $C_{\bar{\lambda}, J}$ explicitly amounts to computing explicitly the coefficients ${\rm tr}(\tau \, | \, V_{\bar{\lambda},1}(\bar{\nu})) = P_{w_{\bar{\nu}}, w_{\bar{\lambda}}}(1)$ and the elements $z_{\bar{\nu}, J}$. There are well-known algorithms for computing $P_{w_{\bar{\nu}}, w_{\bar{\lambda}}}(1)$. As for the $z_{\bar{\nu}, J}$, the main problem is to compute them explicitly when $J = I$ is an Iwahori subgroup of $G(F)$, in terms of the Iwahori-Matsumoto generators $T_w \,\, (w \in \widetilde{W}^\tau)$ for the Iwahori-Hecke algebra $\mathcal H(G(F), I)$. This can be done using the theory of alcove-walks, see \cite{Gor07} and \cite[Appendix]{HR12}. Thus, in principle, the geometric basis elements $C_{\bar{\lambda},J}$ can be computed explicitly, in every case.
\end{Remark}

\subsection{Proof of Theorem \ref{coeff_KL_poly}} \label{KL_poly_pf_sec}

%\subsection{Preparations foroof of Theorem \ref{coeff_KL_poly}}

The proof relies on the twisted Weyl character formula (a.k.a.\,the Twining Character Formula of Jantzen \cite{Jan}) and Lusztig's character formula for unequal parameters \cite[Cor.\,6.4]{Kn}. Theorem \ref{Knop_comp_thm} is what allows us to relate these two formulas. They apply to {\em connected} reductive complex groups with a splitting-preserving automorphism. Using Lemma \ref{ht_wt_lem}(4), we may replace $\bar{\lambda}, \, \bar{\nu}$ with $\bar{\lambda}^\flat, \, \bar{\nu}^\flat$ and work in the connected reductive group $\widehat{G}^{I, \circ}$ endowed with the automorphism $\tau$.  Note that $P_{w_{\bar{\nu}} , w_{\bar{\lambda}}}(q^{1/2}) = P_{w_{\bar{\nu}^\flat}, w_{\bar{\lambda}^\flat}}(q^{1/2})$ because the torsion subgroup of $X_*(T)_I$ lies the center of $\widetilde{W}$ and in particular in $\Omega_{\breve{\bf a}}$; see \cite[$\S 8.1$]{HH}.

\subsubsection{The twisted Weyl character formula}

We review the twisted Weyl character formula; a statement can be found in \cite[Prop.\,5.1]{Chr}. This is equivalent to the so-called Twining Character Formula of Jantzen \cite[Satz 9]{Jan}. 

Use the symbol $e^{\bar{\nu}^\flat}$ to denote the element $\bar{\nu}^\flat \in X^*(\widehat{T}^{I, \circ})$ when it is viewed in the group algebra $\mathbb C[X^*(\widehat{T}^{I, \circ})]$, so that we have $e^{\bar{\nu}} e^{\bar{\nu}'} = e^{\bar{\nu} + \bar{\nu}'}$ and $w(e^{\bar{\nu}}) = e^{w(\bar{\nu})}$. Then the twisted Weyl character formula may be stated as follows. 

\begin{theorem} [Twisted Weyl character formula - original form] \label{twisted_Weyl_0} For $\bar{\lambda}^\flat \in X^*(\widehat{T}^{I,\circ})^{+,\tau}$, there is an equality
\begin{equation} \label{Jant_form_0}
\sum_{\bar{\nu}^\flat \in {\mathcal Wt}(\bar{\lambda}^\flat)^\tau} {\rm tr}(\tau \, | \, V_{\bar{\lambda}^\flat, 1}(\bar{\nu}^\flat)) \, e^{\bar{\nu}^\flat}= \sum_{w \in W_0} w \Big( \prod_{\alpha \in N'_\tau(\breve{\Sigma}^\vee)^+}\frac{1}{1 - e^{-\alpha}} \, \Big) \cdot e^{w\bar{\lambda}^\flat}.
\end{equation}
\end{theorem}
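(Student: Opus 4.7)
The left-hand side is by inspection the twining character of the $\widehat{G}^{I,\circ}\rtimes\langle\tau\rangle$-representation $V_{\bar{\lambda}^\flat,1}$: only $\tau$-stable weight spaces of $\widehat{T}^{I,\circ}$ contribute a nonzero trace of $\tau$, and the coefficient of $e^{\bar{\nu}^\flat}$ on the left is precisely ${\rm tr}(\tau \mid V_{\bar{\lambda}^\flat,1}(\bar{\nu}^\flat))$. The plan is to identify this twining character with the right-hand side by invoking Jantzen's twining character formula \cite[Satz 9]{Jan} (see also \cite[Prop.\,5.1]{Chr}), applied to the connected reductive complex group $\widehat{G}^{I,\circ}$ equipped with the $\tau$-fixed splitting $(\widehat{T}^{I,\circ}, \widehat{B}^{I,\circ}, \widehat{X})$ supplied by Proposition \ref{IMRN_prop}.

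Jantzen's theorem asserts that the twining character of the irreducible highest-weight module $V_{\bar{\lambda}^\flat,1}$ equals the ordinary Weyl character, with highest weight $\bar{\lambda}^\flat$, of a second connected reductive complex group -- the ``auxiliary'' or ``folded'' group associated to the pair $(\widehat{G}^{I,\circ},\tau)$. The substantive step is then the identification of the root-theoretic data of this auxiliary group. By Corollary \ref{G^I_dualroots_cor} the root system of $\widehat{G}^{I,\circ}$ is $\breve{\Sigma}^\vee$, and by Proposition \ref{IMRN_prop} the action of $\tau$ on it matches the natural one derived from $X_*(T)_I$. A direct comparison of the construction in \cite{Jan} with Definition \ref{norm+res_defn} shows that the simple roots of the auxiliary group are precisely the modified norms $N'_\tau(\beta^\vee)$ of the simple roots $\beta^\vee$ of $\breve{\Sigma}^\vee$: the orthogonal/non-orthogonal dichotomy built into $N'_\tau$ is exactly what handles the $A_{2n}$-type exception in folding (in that case, the auxiliary Lie algebra is of type $C_n$, rather than the type $B_n$ of the naive fixed Lie algebra). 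Its Weyl group is $\breve{W}^\tau = W_0$.

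With these identifications in hand, the right-hand side of (\ref{Jant_form_0}) is precisely the standard Weyl character formula for the auxiliary group,
$$
\sum_{w \in W_0} w\!\left(\frac{e^{\bar{\lambda}^\flat}}{\prod_{\alpha\in N'_\tau(\breve{\Sigma}^\vee)^+}(1-e^{-\alpha})}\right),
$$
rewritten in the claimed form using that each $w \in W_0$ acts as a ring automorphism of $\mathbb C[X^*(\widehat{T}^{I,\circ})]$. The main obstacle I expect is the root-system matching in the preceding paragraph: the $A_{2n}$-case must be treated with care, since it is the unique situation where the naive norm $N_\tau$ fails to produce the correct simple root of the auxiliary group and must be replaced by $N'_\tau$. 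This is the precise point where the combinatorial framework of Definition \ref{norm+res_defn} and Theorem \ref{thm_A} / Proposition \ref{dual_prop} was designed to pay off, so no new root-theoretic work beyond citing those earlier results should be required.
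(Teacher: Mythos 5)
Your proposal is correct and follows essentially the same route as the paper, which simply cites Jantzen's Twining Character Formula in the form of \cite[Prop.\,5.1]{Chr} and observes (via Propositions \ref{IMRN_prop}, \ref{on_Sigma}, and \ref{dual_prop}) that $N'_\tau(\breve{\Sigma}^\vee)$ is the root system of the relevant auxiliary group. The only point you elide, which the paper flags in a remark, is that the cited references treat simply connected groups rather than a general connected reductive group such as $\widehat{G}^{I,\circ}$, so strictly speaking an extension (e.g.\ by adapting Hong's geometric argument) is needed.
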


\begin{proof}
See for instance \,\cite[Prop.\,5.1]{Chr}.
\end{proof}
It is clear that the right hand side is the character of a highest weight representation of a suitable connected reductive group (since Propositions \ref{IMRN_prop}, \ref{on_Sigma}, and \ref{dual_prop} show that $N'_\tau(\breve{\Sigma}^\vee)$ is the set of roots of $\widehat{(\widehat{H^\circ}^\tau)}$, where $H^\circ := \widehat{G}^{I, \circ}$).

\begin{Remark}
All of the references known to the author (e.g.\,\cite{Jan, Chr, KLP, Hong}, and the references mentioned in \cite{KLP, Hong}) work with a simply connected complex group instead of a connected reductive group such as $\widehat{G}^{I, \circ}$. One way to give a conceptual proof in the general connected reductive case is to adapt the argument of Hong \cite{Hong}, who proved Jantzen's Twining Character Formula for simply connected complex groups, using the geometric Satake correspondence. %One can do that adaptation, but to get (\ref{Jant_form_0}) it is most natural to use the most general available form of the geometric Satake correspondence, namely the version for quasi-split groups $G_F$ which has been worked out by Timo Richarz \cite{Ric2}.  We omit the details.
\end{Remark}

Using Theorem \ref{Knop_comp_thm}, we can rewrite (\ref{Jant_form_0}) in a form which is more convenient for us.

\begin{theorem} [Twisted Weyl character formula] \label{twisted_Weyl} For $\bar{\lambda}^\flat \in X^*(\widehat{T}^{I,\circ})^{+,\tau}$, there is an equality
\begin{equation} \label{Jant_form}
\sum_{\bar{\nu}^\flat \in {\mathcal Wt}(\bar{\lambda}^\flat)^\tau} {\rm tr}(\tau \, | \, V_{\bar{\lambda}^\flat, 1}(\bar{\nu}^\flat)) \, e^{\bar{\nu}^\flat}= \sum_{w \in W_0} w \Big( \prod_{\alpha \in \widetilde{\Sigma}_0^+}\frac{1}{1 - e^{-\alpha^\vee}} \, \Big) \cdot e^{w\bar{\lambda}^\flat}.
\end{equation}
\end{theorem}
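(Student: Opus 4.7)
The plan is to deduce Theorem \ref{twisted_Weyl} immediately from its original form, Theorem \ref{twisted_Weyl_0}, by a direct reindexing of the product on the right-hand side, using the identification of root systems in Theorem \ref{Knop_comp_thm}. Since the left-hand sides of (\ref{Jant_form_0}) and (\ref{Jant_form}) are literally identical, the only work is to verify that the two products over positive roots agree after the appropriate substitution.

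First I would invoke Theorem \ref{Knop_comp_thm}, which provides the identification of based reduced root systems
\[
\widetilde{\Sigma}_0^\vee \,\cong\, N'_\tau(\breve{\Sigma}^\vee)
\]
inside the ambient vector space (which, under the averaging isomorphism $(X_*(T)^I \otimes \mathbb R)_\tau \overset{\sim}{\to} (X_*(T)^I \otimes \mathbb R)^\tau = X_*(A) \otimes \mathbb R$, is common to both sides). This identification is compatible with the simple systems induced by the $\tau$-stable Borel $B$ and with the common Weyl group $W_0 = \breve{W}^\tau$. Under it, the positive elements of $N'_\tau(\breve{\Sigma}^\vee)$ correspond exactly to the positive coroots $\{\beta^\vee \mid \beta \in \widetilde{\Sigma}_0^+\}$.

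Next I would substitute $\alpha = \beta^\vee$ with $\beta \in \widetilde{\Sigma}_0^+$ in the product appearing on the right-hand side of (\ref{Jant_form_0}):
\[
\prod_{\alpha \in N'_\tau(\breve{\Sigma}^\vee)^+} \frac{1}{1 - e^{-\alpha}} \,=\, \prod_{\beta \in \widetilde{\Sigma}_0^+} \frac{1}{1 - e^{-\beta^\vee}}.
\]
Since the $W_0$-action on the group algebra $\mathbb C[X^*(\widehat{T}^{I,\circ})]$ is the same under either parametrization, applying this substitution inside the Weyl-like sum $\sum_{w \in W_0} w(\,\cdot\,) \cdot e^{w\bar{\lambda}^\flat}$ of Theorem \ref{twisted_Weyl_0} converts its right-hand side into that of (\ref{Jant_form}).

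The genuine mathematical content thus lies entirely in the two ingredients being invoked, namely Theorem \ref{twisted_Weyl_0} (in turn obtained via Jantzen's Twining Character Formula, extended from the simply connected case to the connected reductive group $\widehat{G}^{I,\circ}$) and Theorem \ref{Knop_comp_thm} (which identifies Knop's root system $\widetilde{\Sigma}_0$ in terms of Galois data on $\breve{\Sigma}$). Consequently there is no real obstacle to overcome; the only subtlety worth noting is checking that the based root system isomorphism in Theorem \ref{Knop_comp_thm} respects both the choice of positive system and the $W_0$-action, but this is automatic because the identifications $\widetilde{\Sigma}_0^\vee \cong N'_\tau(\breve{\Sigma}^\vee)$ and $\widetilde{\Sigma}_0 \cong {\rm res}_\tau(\breve{\Sigma})$ in Theorem \ref{Knop_comp_thm} are established via their natural bases, both derived from the common simple system $\breve{\Delta} \subset \breve{\Sigma}$.
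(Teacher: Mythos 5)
Your proposal is correct and matches the paper's own (very brief) argument: the paper likewise derives Theorem \ref{twisted_Weyl} from Theorem \ref{twisted_Weyl_0} simply by using the identification $\widetilde{\Sigma}_0^\vee \cong N'_\tau(\breve{\Sigma}^\vee)$ of Theorem \ref{Knop_comp_thm} to reindex the product over positive roots. Your additional remark that the identification respects the positive systems and the $W_0$-action, being induced from the common simple system, is exactly the point that makes the substitution legitimate.
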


 We write the twisted Weyl character formula as above in order to make the connection with Knop's version of the Lusztig character formula below.

\subsubsection{Analogue of Lusztig's character formula}

Combined with Theorem \ref{twisted_Weyl}, the following formula immediately implies Theorem \ref{coeff_KL_poly}.

\begin{theorem}[Knop] \label{Lus_char_form} The following analogue of Lusztig's character formula from \cite[Thm.\,6.1]{Lus83} holds \textup{:}
\begin{equation} \label{Lus_char_form_eq}
\sum_{w \in W_0} w \Big( \prod_{\alpha \in \widetilde{\Sigma}_0^+}\frac{1}{1 - e^{-\alpha^\vee}} \, \Big) \cdot e^{w\bar{\lambda}^\flat} = \sum_{\bar{\nu}^\flat \in {\mathcal Wt}(\bar{\lambda}^\flat)^\tau} P_{w_{\bar{\nu}^\flat}, w_{\bar{\lambda}^\flat}}(1) \cdot  e^{\bar{\nu}^\flat}.
\end{equation}
\end{theorem}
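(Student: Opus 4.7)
The strategy is to deduce this as a direct specialization of Knop's character formula for affine Hecke algebras with unequal parameters, proved in \cite[Cor.\,6.4]{Kn}, which extends Lusztig's original equal-parameter formula \cite[Thm.\,6.1]{Lus83} to the setting needed here. Knop proves an identity that has the shape of (\ref{Lus_char_form_eq}), but with a reduced root system $R_L$ (built from the affine Hecke algebra data $({\bf H}(\widetilde{W}^\tau, S^\tau_{\rm aff}, L), L)$) appearing in the denominators on the left-hand side. The right-hand side in Knop's formula is a sum of the Kazhdan-Lusztig polynomials $P_{w_{\bar\nu^\flat}, w_{\bar\lambda^\flat}}$ evaluated at $1$, exactly as in (\ref{Lus_char_form_eq}).

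The essential work is then to verify that in our setting $R_L = \widetilde{\Sigma}_0$. Knop's recipe for $R_L$ starts from the reduced root system $\Sigma_0$ canonically attached to the Coxeter data of $\breve{W}^\tau_{\rm aff}$, and replaces each long simple root $a$ in a $C_n$-component for which $L(s_a) \neq L(s_0)$ by $\tfrac{1}{2} a$. This recipe is literally Definition \ref{Knop_def} of the ``special'' simple roots defining $\widetilde{\Sigma}_0$. Hence $R_L = \widetilde{\Sigma}_0$ on the nose, and Knop's theorem specializes immediately to (\ref{Lus_char_form_eq}). Note that Theorem \ref{Knop_comp_thm} plays a complementary conceptual role: it identifies the same $\widetilde{\Sigma}_0$ in purely Galois-theoretic terms as $\mathrm{res}_\tau(\breve{\Sigma})$, which is what allowed us to recognize the root system appearing on the left-hand side of the twisted Weyl character formula (Theorem \ref{twisted_Weyl}) as being the dual side of the same object.

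The main potential obstacle is not mathematical but notational: one must check that Knop's conventions agree with ours, namely that the parameter function $L : S^\tau_{\rm aff} \to \mathbb Z_{\geq 1}$ arising from our parahoric subgroup $J = P_{\bf a}(\mathcal O_F)$ matches Knop's, that the Kazhdan-Lusztig polynomials $P_{x,y}(q^{1/2})$ are normalized identically, that $w_{\bar\nu^\flat}$ is the maximal-length element in $W_0 t_{\bar\nu^\flat} W_0$ in both setups, and that the Bruhat order on $\widetilde{W}^\tau$ extended from its Coxeter part $\breve{W}^\tau_{\rm aff}$ via $\Omega^\tau_{\breve{\bf a}}$ agrees with Knop's. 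These compatibility checks are routine, and have already been set up carefully in $\S\ref{Knop_subsec}$; granted them, the two sides of (\ref{Lus_char_form_eq}) coincide term by term with the two sides of Knop's identity.
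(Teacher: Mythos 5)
Your proposal is correct and takes essentially the same route as the paper: the paper also proves this by citing Knop's Cor.\,6.4, merely adding a sketch of Knop's argument (Weyl and Demazure character formulas, his Satake isomorphism $\Psi$, and the $q^{1/2}\mapsto 1$ specialization of the resulting Lusztig--Kato-type identity). The only caveat is that your ``essential work'' of verifying $R_L = \widetilde{\Sigma}_0$ is vacuous, since Definition \ref{Knop_def} is taken verbatim from Knop's (4.1.4); the genuinely nontrivial identification, $\widetilde{\Sigma}_0 \cong {\rm res}_\tau(\breve{\Sigma})$, lives in Theorem \ref{Knop_comp_thm} and is only needed to match this formula with the twisted Weyl character formula, exactly as you say.
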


\begin{proof}
This follows from \cite[Cor.\,6.4]{Kn}, for which we sketch the proof. By the Weyl character formula, the left hand side is the character of an irreducible representation of a complex group with root system $N'_\tau(\breve{\Sigma}^\vee) = \widetilde{\Sigma}_0^\vee$ and highest weight $\bar{\lambda}^\flat$. By the Demazure character formula (cf.\,e.g.\,\cite{A}), the left hand side is also $s_{\bar{\lambda}^\flat} := \Delta_{w_0}(e^{\bar{\lambda}^\flat})$ where $\Delta_{w_0}$ is the Demazure operator for the longest element $w_0 \in W_0$, relative to the root system $\widetilde{\Sigma}_0$ (\cite[p.\,422]{Kn}).  Let $\Psi$ be the ``Satake isomorphism'' of \cite[(5.2)]{Kn}. In \cite[Thm.\,6.2]{Kn}, Knop shows that $\Psi(s_{\bar{\lambda}^\flat})$ is a {\em KL element in ${\bf H}(\widetilde{W}^\tau, S^\tau_{\rm aff}, L)$ for $w_{\bar{\lambda}^\flat}$} in the sense of  \cite[Def.\,p.\,424]{Kn}.  This together with \cite[Thm.\,5.1, (6.1)]{Kn} shows that 
\begin{equation} \label{LK_eq}
\Psi(s_{\bar{\lambda}^\flat}) = \sum_{\bar{\nu}^\flat \in {\mathcal Wt}(\bar{\lambda}^\flat)^{+, \tau}} P_{w_{\bar{\nu}^\flat}, w_{\bar{\lambda}^\flat}}(q^{1/2}) \, N_{\bar{\nu}^\flat},
\end{equation}
where, up to an integral power of $q^{1/2}$, the element $N_{\bar{\nu}^\flat}$ (\cite[(5.3)]{Kn}) is the standard basis of the spherical affine Hecke algebra ${\bf H}^{\rm sph}$ (\cite[(5.1)]{Kn}). This is therefore an analogue of the Lusztig-Kato formula for quasi-split groups (a proof for split groups can be found in \cite{Ka} or \cite[Thm.\,7.8.1]{HKP}). The result now follows by applying $\Psi^{-1}$ to (\ref{LK_eq}) and then specializing $q^{1/2} \mapsto 1$. 
\end{proof}

\begin{comment}
\begin{Remark}
In \cite{HKP}, which dealt with split $p$-adic groups, a proof of Lusztig's character formula was given using an affine Hecke algebra version of the Lusztig-Kato formula, which in turn was deduced from an affine Hecke algebra version of Macdonald's formula for the Satake transform of a spherical function. Of course in the split context of \cite{HKP}, the affine Hecke algebras have equal parameters. In the present case, we are working with general quasi-split groups $G/F$, so the parameters can be unequal, but at least we still have weights $L(w) > 0$. The arguments given in \cite{HKP} still apply to such affine Hecke algebras and can be used to prove Proposition \ref{Luz_char_form}. We omit the details.
\end{Remark}
\end{comment}

\subsubsection{Conclusion of proof of Theorem \ref{coeff_KL_poly}}

Theorem \ref{coeff_KL_poly} follows immediately by combining Theorems \ref{twisted_Weyl} and \ref{Lus_char_form}. \qed

\subsection{Some elements in the stable Bernstein center} \label{some_elts_sec}

We assume from now on that the conjugacy class $\{ \mu \} \subset X_*(T)$ is defined over $F$. This is enough for the applications to Shimura varieties which will be explained in section \ref{Shim_var_sec}. Let $\mu \in \{\mu \}$ be $B$-dominant; it is therefore fixed by $\mathcal W_F$.  According to \cite[2.1.2]{Ko84}, there is a unique way to extend $V_\mu \in {\rm Rep}(\widehat{G})$ to a representation $(V_\mu, r_\mu)$ of $^LG$, such that $\mathcal W_F$ acts trivially on the weight spaces associated to all $\mu' \in W_0\mu$.

Our goal is to describe certain elements of the stable Bernstein center in terms of the basis elements $C_{\bar{\lambda}, J}$.  The functions we want to understand are denoted $Z_{V} * 1_J$, where $V \in {\rm Rep}(\,^LG)$. The $Z_{V}$ and are called elements of the {\em geometric Bernstein center} in \cite[Def.\,6.2.1]{H14}; the functions $Z_{V_{\mu}} * 1_J$ play an important role in conjectures about cohomology of Shimura varieties $Sh_{K^p J}({\bf G}, \{h\})$ with level structure at $p$ given by a parahoric subgroup $J \subset {\bf G}(\mathbb Q_p)$; see especially the conjectures related to quasi-split groups in \cite[$\S7.2$]{H14}, and section \ref{Shim_var_sec} below.

Let us briefly recall the construction of $Z_V$ following \cite{H14}. As in \cite[$\S5.7$]{H14}, associated to a representation $(V, r)$ of $^LG = \widehat{G} \rtimes \mathcal W_F$ we get an element $Z_{V}$ in the stable Bernstein center of $G/F$. Assuming $G$ satisfies the enhanced local Langlands correspondence LLC+ (see \cite[5.2.1]{H14}), we obtain an element in the actual Bernstein center, also denoted by $Z_{V}$. We view $Z_{V}$ as a $G(F)$-invariant essentially compact distribution on $G(F)$. Then we get $Z_{V} * 1_J \in \mathcal Z(G(F), J)$ (here $1_J$ is the identity element of $\mathcal Z(G(F), J)$). It is characterized by
\begin{equation} \label{char2}
Z_{V} * 1_J \,\,\,\, \mbox{acts on $\pi^J$ by the scalar} \,\,\,\, {\rm tr}(\varphi_\pi(\tau) \, | \, V^{r \varphi_\pi(I)}_{\mu}),
\end{equation}
where $\varphi_\pi : \mathcal W_F \rightarrow \, ^LG$ is the local Langlands parameter associated to $\pi$. We assume from now on that when $\pi^J \neq 0$,
\begin{align*}
\varphi_\pi(\tau) &= s(\pi) \rtimes \tau \\
\varphi_\pi(\gamma) & = 1 \rtimes \gamma
\end{align*}
for all $\gamma \in I$ (comp.\,\cite[Conj.\,13.1]{H15}). (Of course the existence and properties of $\varphi_\pi$ for such $\pi$ follow from the Deligne-Langlands correspondence, which is known for many $p$-adic groups, e.g.\,\cite{KL}. See also \cite[beginning of $\S7$]{H14} for the proof of these properties for general quasi-split groups $G$, assuming LLC+ holds for $G$.) Our assumptions then imply that
\begin{equation} \label{char3}
Z_{V} * 1_J \,\,\,\, \mbox{acts on $\pi^J$ by the scalar} \,\,\,\, {\rm tr}(s(\pi) \rtimes \tau \, | \, V^I),
\end{equation}
where $V^I := V^{r_\mu(1 \rtimes I)}$. For our purposes, we could also {\em define} $Z_V * 1_J$ to be the unique element of $\mathcal Z(G(F), J)$ satisfying (\ref{char3}); this would be an unconditional definition, avoiding the assumption that $G$ satisfies LLC+. Let us prove that such a function exists unconditionally. Let $T(F)_1 = {\rm ker}(\kappa_T: T(F) \rightarrow X_*(T)_{I_F}^{\Phi_F})$ be the kernel of the Kottwitz homomorphism and consider the set ${\rm X}^{\rm w}(T)$ of weakly unramified characters $\chi: T(F) \rightarrow \mathbb C^\times$ (i.e.\,characters trivial on $T(F)_1$). By the Kottwitz isomorphism, we can view $\chi \in {\rm X}^{\rm w}(T)$ as an element of the complex diagonalizable group $(\widehat{T}^{I_F})_\tau$. If $\pi$ has supercuspidal support $(T, \chi)_G$, then $s(\pi) \rtimes \tau = \chi \rtimes \tau$. The function $\chi \mapsto {\rm tr}(\chi \rtimes \tau \, | \, V^I)$ is clearly a regular function on the quotient variety $(\widehat{T}^{I_F})_\tau/W_0$, which is identified with the variety of supercuspidal supports of representations with $J$-fixed vectors.  By the theory of the Bernstein center, this regular function gives rise to the desired unconditional function in $\mathcal Z(G(F), J)$.

Now we return to our $F$-rational conjugacy class $\{ \mu \}$. 
Now $V_\mu|_{\widehat{G}^I}$ is a representation of $\widehat{G}^I \rtimes \mathcal W_F$ and of $\widehat{G}^I \times I$; decomposing into isotypical components for the various irreducible representations $V_{\bar{\lambda}} \in {\rm Rep}(\widehat{G}^I)$, and then looking at the contribution of the trivial representation of $I$, we obtain $V^I_\mu|_{\widehat{G}^I}$ as a sum of certain $V_{\bar{\lambda}}$.  Let ${\mathcal Wt}(\bar{\mu})$ denote the set of $\widehat{T}^I$-weights in $V_{\bar{\mu}}$. The proof of Theorem \ref{PRS_thm} shows that
$$
\overline{\mathcal Wt(\mu)} \subseteq {\mathcal Wt}(\bar{\mu})
$$
and $\bar{\mu}$ is the unique $\preceq$-maximal element in both sets; in fact it also shows that $\lambda \prec \mu$ implies $\bar{\lambda} \prec \bar{\mu}$, and thus the weight $\bar{\mu}$ appears in $V^I_\mu|_{\widehat{G}^I}$ with multiplicity one. Therefore, we get a decomposition in ${\rm Rep}(\widehat{G}^I)$
\begin{equation} \label{decomp}
V^I_\mu|_{\widehat{G}^I} = V_{\bar{\mu}} ~~ \oplus ~~  \bigoplus_{\bar{\lambda} \prec \bar{\mu}} a_{\bar{\lambda}, \mu} \, V_{\bar{\lambda}}
\end{equation}
where $a_{\bar{\lambda}, \mu} \in \mathbb Z_{\geq 0}$ and $\bar{\lambda}$ ranges over elements in the finite set $\overline{{\mathcal Wt}(\mu)} \cap X^*(\widehat{T}^I)^+$.  Since every $\bar{\lambda} \in {\mathcal Wt}(\bar{\mu})$ appears as a $\widehat{T}^I$-weight in $V_{\bar{\mu}}$, it also appears as a weight in $V_\mu^I|_{\widehat{G}^I}$. It follows that we actually have equality
\begin{equation} \label{weight_equality}
\overline{{\mathcal Wt}(\mu)} = {\mathcal Wt}(\bar{\mu}).
\end{equation}

\begin{Remark}
Because $V_{\bar{\mu}}$ appears with multiplicity one and $\bar{\mu}$ is the unique maximal element, the integers $a_{\bar{\lambda}, \mu}$ may be computed recursively in practice.
\end{Remark}

Now, $V^I_\mu|_{\widehat{G}^I}$ and $V_{\bar{\mu}}$ both extend to representations of $\widehat{G}^I \rtimes \langle \tau \rangle$. The remaining summands in (\ref{decomp}) are not necessarily stable under $\tau$, but may be permuted. But if we calculate the trace of an element of the form $s(\pi) \rtimes \tau$, only those $\bar{\lambda}$ which are $\tau$-fixed will contribute, and for those, we can regard, as above, $V_{\bar{\lambda}}$ as the representation $V_{\bar{\lambda},1}$ of $\widehat{G}^I \rtimes \langle \tau \rangle$.

Suppose $\bar{\lambda} \in {\mathcal Wt}(\bar{\mu})^+$. Let $V^I_{\mu}(\bar{\lambda})$ denote the sum of all $\widehat{G}^I$-submodules of $V^I_{\mu}$ which are $\widehat{G}^I$-isomorphic to $V_{\bar{\lambda}}$. 

If $\bar{\lambda} \in {\mathcal Wt}(\bar{\mu})^{+, \tau}$, we define the space ${\mathbb H}_\mu(\bar{\lambda})$ of ``vectors with highest weight $\bar{\lambda}$'' appearing in (\ref{decomp}). Namely, let ${\mathbb H}_\mu(\bar{\lambda}) \subset V^I_{\mu}$ be the set of all vectors $v'$ such that $\widehat{T}^{I}$ acts on $v'$ through the character $\bar{\lambda}$, and $v'$ is killed by ${\rm Lie}(\widehat{U}^{I,\circ})$. Then as $\bar{\lambda}$ is $\tau$-fixed and $\widehat{T}^I$ and $\widehat{U}^{I,\circ}$ are $\tau$-stable, ${\mathbb H}_{\mu}(\bar{\lambda})$ carries an action by $\tau$, which may be diagonalized. Thus, we have a $\widehat{G}^I \rtimes \langle \tau \rangle$ isomorphism
\begin{equation} \label{rep_factoring}
V^I_{\mu}(\bar{\lambda}) \cong V_{\bar{\lambda},1} \otimes  {\mathbb H}_\mu(\bar{\lambda}),
\end{equation}
where $g \rtimes \tau^i$ acts on the r.h.s.\,by the rule $v \otimes v' \mapsto (g\tau^i)v \otimes \tau^i v'$.

Clearly we have isomorphisms of $\widehat{G}^I \rtimes \langle \tau \rangle$-modules
\begin{equation} \label{V^I_full}
V_\mu^I|_{\widehat{G}^I \rtimes \langle \tau \rangle} = \bigoplus_{\bar{\lambda} \in {\mathcal Wt}(\bar{\mu})^{+, \tau}} V_{\bar{\lambda}, 1} \otimes {\mathbb H}_\mu(\bar{\lambda})  ~\oplus ~ \bigoplus_{\underset{\tau\bar{\lambda} \neq \bar{\lambda}}{\bar{\lambda} \in {\mathcal Wt}(\bar{\mu})^+}} V^I_\mu(\bar{\lambda}).
\end{equation}  
In particular, for $\bar{\lambda} \in {\mathcal Wt}(\bar{\mu})^{+, \tau}$, $${\rm dim}(\mathbb H_\mu(\bar{\lambda})) = a_{\bar{\lambda}, \mu}.$$

Comparing ${\rm tr}(s(\pi) \rtimes \tau \, | \, \cdot)$ on both sides of $(\ref{V^I_full})$, it is clear that (\ref{C_char}) and (\ref{char3}) imply the following theorem.

\begin{theorem} \label{geom_center_thm} In the situation above,
$$
Z_{V_{\mu}} * 1_J = \sum_{\bar{\lambda} \in {\mathcal Wt}(\bar{\mu})^{+, \tau}} {\rm tr}(\tau \, | \, {\mathbb H}_\mu(\bar{\lambda})) \, C_{\bar{\lambda} , J}.
$$
\end{theorem}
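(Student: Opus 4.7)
The strategy is to show that both sides act on $\pi^J$ by the same scalar, for every irreducible smooth representation $\pi$ of $G(F)$ with $\pi^J \neq 0$. Since $\{z_{\bar\nu, J}\}_{\bar\nu}$ is a basis of $\mathcal Z(G(F), J)$ and any element of the center is determined by the scalars by which it acts on such $\pi^J$ (for instance because the induced map $\mathcal Z(G(F), J) \to \prod_\pi \mathbb C$ is injective, equivalently the $J$-spherical supercuspidal supports $(\widehat T^{I_F})_\tau/W_0$ form an algebraic variety on which $\mathcal Z(G(F),J)$ acts as a ring of regular functions), this scalar comparison suffices.

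By equation (\ref{char3}), the left-hand side $Z_{V_\mu} * 1_J$ acts on $\pi^J$ by the scalar ${\rm tr}(s(\pi)\rtimes \tau \mid V^I_\mu)$; by (\ref{C_char}) and linearity, the right-hand side acts by
\[
\sum_{\bar\lambda \in \mathcal{Wt}(\bar\mu)^{+,\tau}} {\rm tr}(\tau \mid \mathbb H_\mu(\bar\lambda))\, {\rm tr}(s(\pi)\rtimes \tau \mid V_{\bar\lambda, 1}).
\]
Thus the whole problem reduces to computing ${\rm tr}(s(\pi)\rtimes\tau \mid V^I_\mu)$ by means of the $\widehat G^I \rtimes \langle \tau \rangle$-decomposition (\ref{V^I_full}).

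On each $\tau$-fixed summand $V_{\bar\lambda, 1} \otimes \mathbb H_\mu(\bar\lambda)$ the action of $g \rtimes \tau$ is the tensor $(g\tau)v \otimes \tau v'$, so the trace of $s(\pi) \rtimes \tau$ factors as the product ${\rm tr}(s(\pi)\rtimes\tau \mid V_{\bar\lambda, 1})\cdot {\rm tr}(\tau \mid \mathbb H_\mu(\bar\lambda))$, contributing exactly the $\bar\lambda$-term of the right-hand side. For $\bar\lambda \in \mathcal{Wt}(\bar\mu)^+$ with $\tau\bar\lambda \neq \bar\lambda$, group the isotypic components into $\tau$-orbits: in each orbit $\{\bar\lambda, \tau\bar\lambda, \dots, \tau^{d-1}\bar\lambda\}$ with $d > 1$, the element $s(\pi) \in \widehat T^I \subset \widehat G^I$ preserves each $V^I_\mu(\tau^i\bar\lambda)$ separately (as it commutes with the $\widehat G^I$-isotypic decomposition), while $\tau$ shifts $V^I_\mu(\tau^i \bar\lambda) \to V^I_\mu(\tau^{i+1}\bar\lambda)$ into a distinct summand. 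Consequently $s(\pi) \rtimes \tau$ has no diagonal entries with respect to the $V^I_\mu(\tau^i \bar\lambda)$-decomposition of this block, and contributes zero to the trace. Summing yields exactly the scalar in the previous paragraph, completing the proof.

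There is no substantive obstacle: the argument is essentially bookkeeping with the already-exhibited decomposition (\ref{V^I_full}). The only subtle point worth flagging is that one need not invoke LLC+ to make sense of $Z_{V_\mu} * 1_J$: as noted just before (\ref{char3}), the character $\chi \mapsto {\rm tr}(\chi \rtimes \tau \mid V^I_\mu)$ is a regular function on $(\widehat T^{I_F})_\tau/W_0$ and thus unconditionally defines the desired element of $\mathcal Z(G(F), J)$ via the Bernstein isomorphism, so the comparison above is unconditional.
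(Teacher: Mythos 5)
Your proposal is correct and follows exactly the paper's own argument: the paper's proof is the one-line observation that comparing ${\rm tr}(s(\pi)\rtimes\tau\mid\cdot)$ on both sides of the decomposition (\ref{V^I_full}) and invoking the characterizations (\ref{C_char}) and (\ref{char3}) yields the identity, and your write-up simply makes explicit the two points the paper leaves implicit (the trace factoring on the tensor summands, and the vanishing of the trace on the non-$\tau$-fixed isotypic blocks, which the paper already notes in the discussion preceding the theorem). Your remark about the unconditional definition of $Z_{V_\mu}*1_J$ likewise matches the paper's own discussion in $\S$\ref{some_elts_sec}.
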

Note that ${\rm tr}(\tau \, | \, {\mathbb H}_\mu(\bar{\mu})) = 1$ as ${\mathbb H}_{\mu}(\bar{\mu})$ is the trivial 1-dimensional representation of $\tau$.

\iffalse
\begin{Remark}
Let $n = n_\tau$ be the order of $\tau$ acting on $\widehat{G}$, and let $\zeta_n$ be a primitive $n$-th root of unity in $\mathbb C$. Then ${\rm tr}(\tau \, | \, \mathbb H_\mu(\bar{\lambda}) )\in \mathbb Z[\zeta_n]$. Indeed, let $m_\tau(X)$ (resp.\,${\rm char}_\tau(X)$) in $\mathbb C[X]$ be the minimal (resp.\,characteristic) polynomial of $\tau$ acting on ${\mathbb H}_{\mu}(\bar{\lambda})$. The assertion follows as $m_\tau(X) | X^n-1$ in $\mathbb C[X]$, and the irreducible factors of ${\rm char}_\tau(X)$ and $m_\tau(X)$ are the same.
\end{Remark}
\fi

\section{On test functions for quasi-split groups and parahoric level} \label{Shim_var_sec}

In \cite{H14} several conjectures are announced about test functions for Shimura varieties. We refer to \cite{H14} for details on what is meant by ``test function'' and for the statements of the conjectures in the general case. Here, we content ourselves to relate the previous sections of this paper to the conjectural test functions for Shimura varieties coming from Shimura data $({\bf G}/\mathbb Q, \{h \}, K^pK_p)$, where $K_p \subset {\bf G}(\mathbb Q_p)$ is a parahoric subgroup, and where ${\bf G}$ is quasi-split over the relevant extension of the local reflex field.  

We will need the following general lemma. The proof is straightforward and will be left to the reader.

\begin{lemma} \label{ramified_descent}
Let $E/F$ be a totally ramified extension of degree $n$. Let $G$ be a reductive group over $F$. Let $\mathcal W_E = I_E \rtimes \langle \tau \rangle$ and $\mathcal W_F = I_F \rtimes \langle \tau \rangle$ be the corresponding Weil groups. Suppose $e=h_1, h_2, \dots, h_n \in I_F$ is a fixed set of representatives of $I_F/I_E$ \textup{(}thus also of $\widehat{G} \rtimes \mathcal W_F/\widehat{G} \rtimes \mathcal W_E$\textup{)}. Let $V \in {\rm Rep}(\widehat{G} \rtimes \mathcal W_E)$, and consider the induced representation in ${\rm Rep}(\widehat{G} \rtimes \mathcal W_F)$ defined by
$$
I(V) = \mathbb Z[\widehat{G} \rtimes \mathcal W_F] \otimes_{\mathbb Z[\widehat{G} \rtimes \mathcal W_E]} V.
$$ 
Then there is an isomorphism of $\widehat{G}^{I_F} \rtimes \langle \tau \rangle$-modules
\begin{align} \label{ram_desc_eq}
V^{I_E} &\overset{\sim}{\longrightarrow} I(V)^{I_F} \\
v &\longmapsto \sum_{i=1}^n h_i \otimes v. \notag
\end{align}
\end{lemma}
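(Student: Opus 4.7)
The plan is to verify three things about the proposed map $v \mapsto \sum_{i=1}^n h_i \otimes v$: that it actually lands in $I(V)^{I_F}$, that it commutes with the action of $\widehat{G}^{I_F} \rtimes \langle \tau \rangle$, and that it is a bijection. Since $\tau \in \mathcal W_E$ (the same geometric Frobenius works for both $F$ and $E$ because $E/F$ is totally ramified), we have $\mathcal W_F = \mathcal W_E \cdot I_F$ and $\mathcal W_E \cap I_F = I_E$; in particular the $h_i$ are also coset representatives for $\widehat{G} \rtimes \mathcal W_E$ in $\widehat{G} \rtimes \mathcal W_F$, and as a $\mathbb Z$-module $I(V) = \bigoplus_{i=1}^n h_i \otimes V$.

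For the first point, fix $v \in V^{I_E}$ and $h \in I_F$. Left multiplication by $h$ permutes the cosets $\{h_i I_E\}_i$, so $h h_i = h_{\sigma(i)} e_i$ for some permutation $\sigma = \sigma_h$ and elements $e_i = h_{\sigma(i)}^{-1} h h_i \in I_E$. Moving each $e_i$ across the tensor and using $e_i v = v$ yields
\[
h \cdot \sum_i h_i \otimes v \;=\; \sum_i h_{\sigma(i)} \otimes e_i v \;=\; \sum_j h_j \otimes v,
\]
which is the claimed $I_F$-invariance.

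For the equivariance, the $\widehat{G}^{I_F}$-action is handled by the relation $g h_i = h_i g$ for $g \in \widehat{G}^{I_F}$ (since conjugation by $h_i^{-1} \in I_F$ fixes $g$); this lets one slide $g$ across the tensor to recover the image of $g v \in V^{I_E}$. For $\tau$, the normality of $I_F$ in $\mathcal W_F$ and of $I_E$ in $\mathcal W_E$ together imply that conjugation by $\tau$ stabilizes both $I_F$ and $I_E$ and hence permutes the cosets $I_F/I_E$; writing $\tau h_i \tau^{-1} = h_{\pi(i)} e_i'$ with $e_i' \in I_E$, and noting that $\tau v \in V^{I_E}$ (because $\tau^{-1} I_E \tau = I_E$), the same manipulation yields $\tau \cdot \sum_i h_i \otimes v = \sum_j h_j \otimes \tau v$.

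Injectivity is immediate from the direct-sum decomposition $I(V) = \bigoplus_i h_i \otimes V$: projection onto the $h_1 \otimes V$ summand returns $v$. Surjectivity is then just a dimension count, supplied by Mackey--Shapiro: restricted to $I_F$, the module $I(V)$ is isomorphic to $\Ind_{I_E}^{I_F}(V|_{I_E})$, whence $\dim_{\mathbb C} I(V)^{I_F} = \dim_{\mathbb C} V^{I_E}$ by Shapiro's lemma. The only real subtlety is the coset bookkeeping for $I_F$ and $\tau$; once that is in place, the rest of the argument is purely formal.
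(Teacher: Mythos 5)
Your proof is correct; the paper itself omits the argument entirely (``The proof is straightforward and will be left to the reader''), and what you have written is exactly the standard verification the author intends. The coset bookkeeping for the $I_F$- and $\tau$-actions is handled properly (using that $\tau$ normalizes both $I_E$ and $I_F$ and that $\tau$ lies in $\mathcal W_E$ because $E/F$ is totally ramified), and the injectivity-plus-Shapiro dimension count cleanly closes the bijectivity.
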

\iffalse
Let $W_F \subset W_E$ be the finite Weyl groups of the reductive groups $\widehat{G}^{I_F}$ and $\widehat{G}^{I_F}$ respectively. We also need the natural map
$$
{\rm bc}^{E}_F : (\widehat{T}^{I_F})_\tau/W_F  ~\longrightarrow ~ (\widehat{T}^{I_E})_\tau/W_E
$$ 
which induces when $G/F$ is quasi-split, for any parahoric subgroups $J_E \subset G(E)$ and $J_F \subset G(F)$, a homomorphism of Bernstein centers
\begin{equation} \label{bc_hom_defn}
{\rm bc}^E_F: \mathcal Z(G(E), J_E) ~ \longrightarrow ~ \mathcal Z(G(F), J_F).
\end{equation}
\fi

\begin{comment}
\begin{lemma} \label{V_lambda_res_lem}
In the above situation, let $\bar{\lambda} \in X^*(\widehat{T}^{I_E})^{+, \tau}$. Then there is an isomorphism of $\widehat{G}^{I_F} \rtimes \langle \tau \rangle$-modules
\begin{equation*}
V_{\bar{\lambda},1}|_{\widehat{G}^{I_F} \rtimes \langle \tau \rangle} = \bigoplus_{\bar{\lambda}'} V_{\bar{\lambda}', 1},
\end{equation*}
where .....
\end{lemma}
\end{comment}

We need to recall some notation from \cite[$\S6$]{H14}. Write $G := {\bf G}_{\mathbb Q_p}$, and suppose $E/\mathbb Q_p$ is the local reflex field, that is, the field of definition of the geometric conjugacy class $\{ \mu \} \subset X_*(G_{\bar{\mathbb Q}_p})$, where $\mu = \mu_h$ is attached to $h$ in the usual way.

Let $E_j/E$ be an unramified extension of degree $j \geq 1$, let $E_0/\mathbb Q_p$ be the maximal unramified subextension of $E/\mathbb Q_p$. Let $E_{j0}/\mathbb Q_p$ be the maximal unramified subextension of $E_j/\mathbb Q_p$. Then $E/E_0$ and $E_j/E_{j0}$ are totally ramified of the same degree, and $E_{j0} = \mathbb Q_{p^r}$, where $r := j[E_0: \mathbb Q_p]$.  Thus we have a tower of fields
$$
\xymatrix{
& E_{j} \ar@{-}[dl]  \ar@{-}[dr] & \\
\mathbb Q_{p^r} = E_{j0}  \ar@{-}[dr] & & \ar@{-}[dl] E \\
& E_0 \ar@{-}[d] & \\
& \mathbb Q_p. & }
$$
As in \cite[$\S6$]{H14}, we have a representation $V^E_\mu$ of $\widehat{G} \rtimes \mathcal W_{E}$, its restriction $V^{E_{j}}_{\mu, j}$ to $\widehat{G} \rtimes \mathcal W_{E_j}$, and the induction of the latter to $\widehat{G} \rtimes \mathcal W_{E_{j0}}$
$$
V^{E_{j0}}_{\mu, j} := {\rm Ind}^{\widehat{G} \rtimes \mathcal W_{E_{j0}}}_{\widehat{G} \rtimes \mathcal W_{E_j}} V^{E_j}_{\mu, j}.
$$

{\bf Now assume that $G$ is quasi-split over $E_{j0} = \mathbb Q_{p^r}$.} The conjugacy class $\{ \mu \}$ can be represented by an $E_j$-rational element $\mu$, and this allows us to apply the material of $\S\ref{some_elts_sec}$ to $V^{E_{j0}}_{\mu, j}$. 

Let $J_{E_{j0}} \subset G(E_{j0})$ be the parahoric subgroup corresponding to $K_p \subset G(\mathbb Q_p)$.  The {\em Test Function Conjecture}, see \cite[$\S7.2$]{H14}, highlights the importance of the function
\begin{equation} \label{qs_deal}
 Z_{V^{E_{j0}}_{\mu, j}} * 1_{J_{E_{j0}}} \in \mathcal Z(G(E_{j0}), J_{E_{j0}}).
\end{equation}
\iffalse
Theorem \ref{test_fcn_thm} provides an explicit description of this function, answering a question raised in \cite[$\S7.2$]{H14}. 
\fi
\begin{theorem} \label{test_fcn_thm}
When $G/E_{j0}$ is quasi-split,
\begin{equation} \label{tf_exp_eq}
Z_{V^{E_{j0}}_{\mu, j}} * 1_{J_{E_{j0}}} = \sum_{\bar{\lambda} \in {\mathcal Wt}(\bar{\mu})^{+, \tau}_{E_j}} {\rm tr}(\tau \, | \, \mathbb H_{\mu, E_j}(\bar{\lambda})) \, \sum_{\bar{\nu} \in {\mathcal Wt}(\bar{\lambda})^{+, \tau}_{E_j}} P_{w_{\bar{\nu}}, w_{\bar{\lambda}}} (1) \, \Big(\sum_{\{\bar{\nu}' \} \subset W_{E_j}\bar{\nu}} z_{\bar{\bar{\nu}}'} \Big).
\end{equation}
Here the subscripts $E_j$ and $E_{j0}$ indicate objects attached to the appropriate group $G_{E_j}$ or $G_{E_{j0}}$. 
where $\{ \bar{\nu}' \}$ ranges over $W_{E_{j0}}$-conjugacy classes in $W_{E_j}\bar{\nu}$, and $\bar{\bar{\nu}}' \in X^*(\widehat{T}^{I_{E_{j0}}})^\tau$ is the image of $\bar{\nu}' \in X^*(\widehat{T}^{I_{E_j}})^\tau$. 
\end{theorem}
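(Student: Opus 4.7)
The plan is to combine the descent Lemma \ref{ramified_descent} with the results established in $\S\ref{geom+stable-center_sec}$ applied to the quasi-split group $G_{E_j}$, and then translate the resulting combination into the Bernstein basis for $\mathcal Z(G(E_{j0}), J_{E_{j0}})$.

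First I would use (\ref{char3}) to characterize $Z_{V^{E_{j0}}_{\mu,j}} * 1_{J_{E_{j0}}}$ by its scalar action on $\pi^{J_{E_{j0}}}$, namely ${\rm tr}(s(\pi) \rtimes \tau \, | \, (V^{E_{j0}}_{\mu,j})^{I_{E_{j0}}})$, for $\pi$ any irreducible smooth representation of $G(E_{j0})$ with $\pi^{J_{E_{j0}}} \neq 0$.  Since $E_j/E_{j0}$ is totally ramified of degree $[E:E_0]$, Lemma \ref{ramified_descent} supplies an isomorphism of $\widehat{G}^{I_{E_{j0}}} \rtimes \langle \tau \rangle$-modules
$$(V^{E_{j0}}_{\mu,j})^{I_{E_{j0}}} \,\cong\, (V^{E_j}_{\mu,j})^{I_{E_j}}|_{\widehat{G}^{I_{E_{j0}}} \rtimes \langle \tau \rangle},$$
so the scalar equals ${\rm tr}(s(\pi) \rtimes \tau \, | \, (V^{E_j}_{\mu,j})^{I_{E_j}})$, computed by viewing $s(\pi) \in \widehat{T}^{I_{E_{j0}}} \subset \widehat{T}^{I_{E_j}}$.

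Next I would apply the decomposition (\ref{V^I_full}) to $(V^{E_j}_{\mu,j})^{I_{E_j}}$, valid since $G$ is quasi-split over $E_j$. The summands indexed by $\bar{\lambda}$ with $\tau\bar{\lambda} \ne \bar{\lambda}$ contribute $0$ to ${\rm tr}(s(\pi) \rtimes \tau \, | \, \cdot)$ because $\tau$ permutes them freely, exactly as in the proof of Theorem \ref{geom_center_thm}. Hence the scalar becomes
$$\sum_{\bar{\lambda} \in {\mathcal Wt}(\bar{\mu})^{+,\tau}_{E_j}} {\rm tr}(\tau \, | \, \mathbb H_{\mu, E_j}(\bar{\lambda})) \cdot {\rm tr}(s(\pi) \rtimes \tau \, | \, V^{E_j}_{\bar{\lambda},1}).$$
Then I would apply the character form of Theorem \ref{coeff_KL_poly} for $G_{E_j}$: the function $\chi \mapsto {\rm tr}(\chi \rtimes \tau \, | \, V^{E_j}_{\bar{\lambda},1})$ on $(\widehat{T}^{I_{E_j}})_\tau$ equals
$$\sum_{\bar{\nu} \in {\mathcal Wt}(\bar{\lambda})^{+,\tau}_{E_j}} P_{w_{\bar{\nu}}, w_{\bar{\lambda}}}(1) \sum_{\bar{\nu}' \in W_{E_j}\bar{\nu}} \bar{\nu}'(\chi).$$
Because the inclusion $I_{E_j} \subset I_{E_{j0}}$ induces $W_{E_{j0}} \subset W_{E_j}$, the restriction map $X^*(\widehat{T}^{I_{E_j}})^\tau \to X^*(\widehat{T}^{I_{E_{j0}}})^\tau$, $\bar{\nu}' \mapsto \bar{\bar{\nu}}'$, is $W_{E_{j0}}$-equivariant. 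Partitioning $W_{E_j}\bar{\nu}$ into $W_{E_{j0}}$-orbits $\{\bar{\nu}'\}$ and passing to the Bernstein isomorphism (\ref{Bern_isom}) for $G_{E_{j0}}$ converts each $W_{E_{j0}}$-orbit sum into the corresponding $z_{\bar{\bar{\nu}}', J_{E_{j0}}}$. Reassembling all the contributions yields formula (\ref{tf_exp_eq}).

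The main obstacle is the final bookkeeping step: one must verify that, under the restriction $\bar{\nu}' \mapsto \bar{\bar{\nu}}'$, each $W_{E_{j0}}$-orbit $\{\bar{\nu}'\} \subset W_{E_j}\bar{\nu}$ maps bijectively onto a $W_{E_{j0}}$-orbit in $X^*(\widehat{T}^{I_{E_{j0}}})^\tau$, so the restricted orbit sum equals the Bernstein class $[\bar{\bar{\nu}}']_{E_{j0}}$ without a multiplicity correction. This amounts to the stabilizer-compatibility statement ${\rm Stab}_{W_{E_{j0}}}(\bar{\nu}') = {\rm Stab}_{W_{E_{j0}}}(\bar{\bar{\nu}}')$ for the relevant $\tau$-fixed dominant weights; I expect this to follow from a careful analysis exploiting the quasi-split structure over $E_{j0}$ and the fact that the totally ramified descent collapses only the part of $X^*(\widehat{T}^{I_{E_j}})^\tau$ on which $W_{E_j} \setminus W_{E_{j0}}$ already acts trivially.
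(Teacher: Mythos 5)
Your argument is exactly the paper's: the proof given there is a two-line reduction to the proof of Theorem \ref{geom_center_thm} with $F = E_j$ (and $V_\mu$ replaced by $V^{E_j}_{\mu,j}$), combined with Lemma \ref{ramified_descent} applied to $E_{j0} \subset E_j$, which is precisely what you carry out in more detail. The orbit-bookkeeping point you flag at the end (that each $W_{E_{j0}}$-orbit in $W_{E_j}\bar{\nu}$ should map bijectively onto a $W_{E_{j0}}$-orbit of its image $\bar{\bar{\nu}}'$) is left implicit in the paper's proof as well, so you have if anything been more careful than the source.
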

\begin{proof}
This follows from the proof of Theorem \ref{geom_center_thm} with $F = E_j$ and with $V_\mu$ replaced by $V^{E_j}_{\mu, j}$, combined with Lemma \ref{ramified_descent} for $F = E_{j0}$ and $E = E_j$. 
\end{proof}

To compute test functions when $G/F = G/\mathbb Q_{p^r}$ is not quasi-split, one should simply take the image of the suitable analogue of (\ref{qs_deal}) for a quasi-split inner form $G^*/F$ under a normalized transfer homomorphism. We refer to \cite[$\S7.3$]{H14} for details. 

Recently, Kisin and Pappas \cite{KiPa} constructed local models attached to abelian type Shimura varieties with parahoric level structure at a prime $p > 2$. The above formula (\ref{tf_exp_eq}) for the test function should play a role in describing the corresponding local Hasse-Weil zeta functions in terms of automorphic forms: the nearby cycles considered in \cite[$\S4.7$]{KiPa} should be compared to (\ref{tf_exp_eq}) when pursuing the Langlands-Kottwitz method, as for example in \cite{HR12}. These formulas should also be compared with the explicit results on nearby cycles in \cite{PR03, PR05}, and \cite{Lev}.\footnote{This comparison has been accomplished in the preprint \cite{HaRi}.}

\bigskip

\begin{comment}
\noindent {\em Acknowledgements:} I thank Xuhua He for useful conversations around the subject of this note, and I thank Michael Rapoport and Xinwen Zhu for their remarks on an early version of the manuscript.  I am grateful to Xuhua He and Yiqiang Li for informing me of the reference \cite{Hong}.

 %I am grateful to Xinwen Zhu for his comments, especially those concerning the geometric %approach. I am also grateful to Xuhua He and Yiqiang Li, from whom I learned of the reference %\cite{Hong}.
\end{comment}

\small
\bigskip
\obeylines
\noindent
University of Maryland
Department of Mathematics
College Park, MD 20742-4015 U.S.A.
email: tjh@math.umd.edu

\end{document}